\definecolor{liens}{rgb}{1,0,0}
\newcommand{\malop}[1]{\phi_{#1}}
\newcommand{\malopop}{\Phi_{\ellmahl}}
\newcommand{\ellmahl}{p}
\newcommand{\Hahn}{\mathscr{H}}
\newcommand{\val}{\operatorname{val}_{z}}
\newcommand{\cond}[1]{(\mathcal{O}\Omega_{#1})}
\newcommand{\condprime}[1]{(\mathcal{O}_{#1})}
\newcommand{\condpuisprime}[1]{(\puis-\mathcal{O}_{#1})}
\newcommand{\denommahl}[1]{\mathfrak d_{#1}}
\newcommand{\dfrak}{\mathfrak d}
\newcommand{\efrak}{\mathfrak e}
\newcommand{\cj}{\Xi}
\newcommand{\ala}{{\boldsymbol \Lambda}}
\newcommand{\alat}[1]{\boldsymbol \Lambda_{#1}}
\newcommand{\alaNp}{\ala_{\rm st}} 
\newcommand{\alaNpt}[1]{\ala_{{\rm st},#1}} 
\newcommand{\alaZ}{\Lambda_{\Z}}
\newcommand{\alaZt}[1]{\Lambda_{\Z,#1}}
\newcommand{\tuple}{\omeg}
\newcommand{\starpourexp}{\infty}
\newcommand{\puisinfzero}{\Qbar[z^{-\frac{1}{\starpourexp}}]}
\newcommand{\polyraminfzero}{\Qbar[z^{-\frac{1}{\starpourexp}}]}
\newcommand{\polyramstrictinfzero}{\Qbar[z^{-\frac{1}{\starpourexp}}] \setminus \Qbar}
\newcommand{\dpuis}{\mathcal{D}_{\puis}}
\newtheorem{theo}{Theorem}[]
\newtheorem{prop}[theo]{Proposition}
\newtheorem{lem}[theo]{Lemma}
\newtheorem{defi}[theo]{Definition}
\newtheorem{rem}[theo]{Remark}
\theoremstyle{definition}
\newtheorem{notation}[theo]{Notation}
\newtheorem*{nota*}{Notation}
\newtheorem*{defi*}{Definition}
\newcommand{\hgt}{\mathcal{h}}
\newcommand{\QQ}{\mathbb Q}
\newcommand{\Q}{{\overline{\mathbb Q}}}
\newcommand{\Qbar}{{\overline{\mathbb Q}}}
\newcommand{\V}{\mathcal V}
\renewcommand{\r}{\boldsymbol r}
\newcommand{\C}{\mathbb{C}}
\newcommand{\ZZ}{\mathbb{Z}}
\newcommand{\Z}{\mathbb{Z}}
\newcommand{\N}{\mathbb N}
\newcommand{\Rspe}{\mathcal R}
\newcommand{\K}{\mathcal K}
\newcommand{\Kinf}{\mathbb K_{\infty}}
\newcommand{\I}{\mathcal I}
\newcommand{\E}{\mathcal E}
\newcommand{\lambd}{{\boldsymbol{\lambda}}}
\newcommand{\balpha}{{\boldsymbol{\alpha}}}
\newcommand{\omeg}{{\boldsymbol{\omega}}}
\renewcommand{\v}{{\boldsymbol v }}
\renewcommand{\a}{{\boldsymbol{a}}}
\renewcommand{\b}{{\boldsymbol{b}}}
\newcommand{\Mat}{{\operatorname{M}}}
\newcommand{\RR}{{\mathbb{R}}}
\newcommand{\mS}{\mathcal S}
\newcommand{\logm}{\ell}
\newcommand{\indC}{r}
\newcommand\puis{\mathscr{P}}
\newcommand\bigzero{\makebox(0,0){\text{\huge0}}}
\newcommand{\bord}{}
\newcommand{\cld}{\operatorname{cld}_{z}}
\newcommand{\tuplespt}{\widecheck{\tuple}}
\newcommand{\balphaspt}{\widecheck{\balpha}}
\newcommand{\lambdspt}{\widecheck{\lambd}}
\newcommand{\aspt}{\widecheck{\a}}
\newcommand{\nbblocks}{\sigma}
\DeclareMathOperator{\GL}{GL}
\DeclareMathOperator{\supp}{supp}
\newcommand{\NGA}{\operatorname{NGA}}
\newcommand{\R}{\mathcal{R}}
\title{A purity theorem for Mahler equations}
\author{C.~Faverjon}
\address{Universit\'e Picardie Jules Vernes, UMR CNRS 7352, LAMFA, 80039 Amiens, France}
\email{colin.faverjon@math.cnrs.fr}
\author{J.~Roques }
\address{Universit\'e de Lyon, Universit\'e Claude Bernard Lyon 1, CNRS UMR 5208, Institut Camille Jordan, F-69622 Villeurbanne, France}
\email{Julien.Roques@univ-lyon1.fr}
\date{\today}
\subjclass[2020]{Primary 39A06.}
\begin{document}
	
	\begin{abstract}
		The principal aim of this paper is to establish a purity theorem for Mahler functions that is reminiscent of famous purity theorems for $G$-functions by D. and G.~Chudnovsky and for $E$-functions (and, more generally, for holonomic arithmetic Gevrey series) by Y.~André. Our approach is based on a preliminary study of independent interest of the nature of the solutions of Mahler equations. 
		Roughly speaking, we prove a reduction result for Mahler systems, implying that any Mahler equation admits a complete basis of solutions formed of what we call generalized Mahler series. These are sums involving Puiseux series, Hahn series of a very special type and solutions of inhomogeneous  equations of order $1$  with constant coefficients. 
		In the light of B.~Adamczewski, J.~P.~Bell and D.~Smertnig's recent height gap theorem, we introduce a natural filtration on the set of generalized Mahler series according to the arithmetic growth of the coefficients of the Puiseux series involved in their decomposition. This filtration has five pieces. Our purity theorem states that the membership of a generalized Mahler series to one of the three largest pieces of this filtration propagates to any other generalized Mahler series solution of its minimal Mahler equation. We also show that this statement does not extend to the smallest two pieces.
	\end{abstract}
\maketitle

\setcounter{tocdepth}{1}
\tableofcontents

\vskip 10 pt 

\section{Introduction and statement of the main results}

The primary aim of this article is to establish a purity theorem for Mahler functions that is reminiscent of famous purity theorems for $G$- and $E$-func\-tions and, more generally, for holonomic arithmetic Gevrey  series by D. and G.~Chudnovsky and Y.~André  respectively. Although there is no concrete link between the latter results and those of the present article, 
they played a fundamental role in the genesis of our work, so we start by briefly recalling them.

Following Y.~André in \cite{AndreSGA1}, we say that a power series 
$$
f=\sum_{n=0}^{\infty} a_n z^n \in \Qbar[[z]]
$$ 
with coefficients in the field of algebraic numbers $\Qbar$ is an arithmetic Gevrey series of order $s \in \QQ$ if   there exists $C>0$ such that
\begin{enumerate} 
\item[-] for all $n \in \Z_{\geq 0}$, the maximum of the moduli of the Galoisian conjuguates of $b_{n}:=\frac{a_n}{n!^{s}}$ is bounded by $C^{n+1}$;
\item[-] there exists a sequence of positive integers $(d_n)_{n\geq 0}$ such that, for all $n \in \Z_{\geq 0}$, $d_n \le C^{n+1}$ and $d_n b_0,d_n b_1,\ldots,d_n b_n$ are algebraic integers.
\end{enumerate}
An holonomic  arithmetic Gevrey series of order $0$ (resp.~$-1$) is nothing but a $G$-function (resp.~$E$-function) in the sense of C.~L.~Siegel \cite{Siegel29,Siegel29-trad}. By holonomic, we mean solution of a nonzero linear differential equation with coefficients in $\Qbar(z)$.

To state the purity theorem for these series, it is convenient to introduce the differential $\C[z]$-algebra $\NGA\{z\}_{s}$ of arithmetic Nilsson-Gevrey series of order $s$. An element of $\NGA\{z\}_{s}$ 
is by definition a $\C$-linear combination of terms of the form
$$
u(z) z^{\alpha} \log^{j}(z)
$$
where $\alpha \in \QQ$, $j \in \Z_{\geq 0}$, and $u(z)$ is an arithmetic Gevrey series of order $s$. 

The following fundamental purity theorem is due to D. and G.~Chudnovsky for $s=0$ in \cite{ChudnovskyAPADIVGF} (the proof contains a slight mistake corrected by Y.~André in \cite[Chapter VI]{AndreGFAG}) and to Y.~André for $s \neq 0$ in \cite{AndreSGA1} (see also \cite{AndreArithGevreySeriesSurvey}). 

\begin{theo}[Y.~André, D. and G.~Chudnovsky]\label{theo:purity theo andre chudnovsky}
Let $y$ be an element of $\NGA\{z\}_{s}$ solution of a nonzero linear differential equation $\Psi y = 0$ with coefficients in $\C(z)$. We assume $\Psi$ of minimal order. Then:
\begin{enumerate}
\item if $s \leq 0$, $\Psi$ admits a full basis of solutions in $\NGA\{z\}_{s}$;
\item if $s > 0$, $\Psi$ admits a full basis of solutions of the form $e^{\alpha_{i} z^{-\frac{1}{s}}} y_{i}$ with 
$y_{i} \in \NGA\{z\}_{s}$ and $\alpha_{i} \in \Qbar$. 
\end{enumerate}
\end{theo}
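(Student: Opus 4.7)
My plan is to combine the formal Hukuhara--Turrittin classification of singularities of $\Psi$ with the minimality hypothesis on $\Psi$. Working at $z = 0$ and after an auxiliary ramification $z = w^{q}$, one obtains a complete formal basis of solutions of $\Psi y = 0$ whose elements have the shape
\[
y_{i}(z) = e^{Q_{i}(z^{-1/q})}\, z^{\beta_{i}} \sum_{j=0}^{m_{i}} \log^{j}(z) \, u_{i,j}(z^{1/q}),
\]
with $Q_{i} \in \Qbar[X]$, $\beta_{i} \in \Qbar$, and $u_{i,j}(w) \in \Qbar[[w]]$ of some formal Gevrey order controlled by $\deg Q_{i}$ and the slopes of the Newton polygon of $\Psi$ at $0$.

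The next step is to translate the arithmetic Gevrey hypothesis on $y$ into information on the factors appearing in its decomposition $y = \sum c_{i} y_{i}$, with $c_{i} \in \C$. When $s \leq 0$, the height and denominator bounds on the Taylor coefficients of $y$ rule out all exponential factors $Q_{i} \neq 0$ among indices with $c_{i} \neq 0$, and force the surviving regular part to be itself arithmetic Gevrey of order $s$. When $s > 0$, the same bounds force $\deg Q_{i} = 1/s$ and $Q_{i}(X) = \alpha_{i} X$, so $Q_{i}(z^{-1/q}) = \alpha_{i} z^{-1/s}$ with $\alpha_{i} \in \Qbar$ as in the statement, while the residual factor $z^{\beta_{i}} \sum_{j} \log^{j}(z) u_{i,j}$ lies in $\NGA\{z\}_{s}$.

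The heart of the argument is to upgrade this information from the single solution $y$ to a complete basis, using the minimality of $\Psi$. Let $V$ be the subspace of the full formal solution space consisting of elements of the form prescribed by the theorem. A direct check shows that $V$ is stable under the monodromy and formal Stokes data of $\Psi$; equivalently, $V$ is the space of horizontal sections of a differential submodule, so is cut out by a left factor $\Psi_{V}$ of $\Psi$. Since $y \in V$ is annihilated by $\Psi_{V}$, the minimality of $\Psi$ forces $\Psi_{V} = \Psi$, hence $V$ coincides with the full solution space, yielding the desired complete basis.

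I expect the main obstacle to be checking that the arithmetic Gevrey condition is really inherited by all members of the Jordan block attached to a given formal exponent $\beta_{i}$ and by all logarithm powers, not only by $y$ itself; this is precisely what makes $V$ ``large enough'' to be the full solution space, and it is where the sharp height/denominator estimates must be propagated through integration and twists. For $s = 0$ this is the content of the Chudnovsky--André theorem that the minimal operator of a $G$-function is a $G$-operator, and is deduced from global $p$-adic estimates. For $s \neq 0$, one reduces to the $s = 0$ case through André's formal arithmetic transforms, which exchange arithmetic Gevrey series of different orders and intertwine exponential twists with regular-singular ones.
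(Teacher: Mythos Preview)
The paper does not contain a proof of this theorem. Theorem~\ref{theo:purity theo andre chudnovsky} is quoted as a classical result from the literature (Chudnovsky for $s=0$, Andr\'e for $s\neq 0$) purely as motivation for the paper's own Mahler-theoretic purity theorem; the paper gives references \cite{ChudnovskyAPADIVGF}, \cite{AndreGFAG}, \cite{AndreSGA1}, \cite{AndreArithGevreySeriesSurvey} and moves on. So there is no in-paper argument to compare your proposal against.

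On the substance of your sketch: the overall architecture (Turrittin basis, arithmetic constraints on the exponential parts, reduction of $s\neq 0$ to $s=0$ via Andr\'e's transforms) is indeed the shape of the original proofs. But your Step~3 is not how the propagation actually works. You write that the subspace $V$ of solutions of the desired form is ``stable under the monodromy and formal Stokes data'' by ``a direct check,'' hence is cut out by a left factor $\Psi_{V}$ of $\Psi$. This is circular: knowing that $V$ is stable under these operations is essentially equivalent to knowing that the arithmetic Gevrey property propagates across a Jordan block, which is what you are trying to prove. In the actual arguments, minimality is used differently: for $s=0$, Chudnovsky's theorem shows directly that the minimal operator of a $G$-function is globally nilpotent (a $p$-adic statement about reductions modulo almost all primes), from which one deduces that it is Fuchsian with rational exponents and that \emph{all} local solutions are $G$-functions. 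The propagation is thus a consequence of a structural property of the operator obtained by $p$-adic analysis, not of a stability check on a candidate solution space. You do flag this correctly in your last paragraph, so your proposal is best read as a reduction scheme together with an honest pointer to where the real work (global nilpotence, Andr\'e's arithmetic Laplace/Fourier transforms) must be imported.
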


\begin{rem}
Any linear differential equation of order $d$ with coefficients in $\C(z)$ -- or, more generally, in $\C(\{z\})$ -- has a basis of solutions made of $\C$-linear combinations of terms of the form
$$
u(z) z^{\alpha} \log^{j}(z) e^{Q(z^{-1/d!})}
$$
where $\alpha \in \C$, $j \in \Z_{\geq 0}$, $u(z)$ is a Gevrey series and $Q(X) \in X\C[X]$. Theorem~\ref{theo:purity theo andre chudnovsky} shows in particular that the fact that the differential operator under consideration is a nonzero differential operator of minimal order annihilating an holonomic arithmetic Gevrey series imposes severe restrictions on the $\alpha$, $u(z)$ and $Q(X)$ involved in its basis of solutions.  
\end{rem}

We now come to the subject of our study, the Mahler equations. 
Let $\ellmahl \geq 2$ be an integer. By $\ellmahl$-Mahler equation, we mean a linear functional equation of the form   
\begin{equation}\label{eq: mahler1} 
 a_0(z) f(z) + a_1(z) f(z^{\ellmahl})+ \cdots + a_d(z) f(z^{\ellmahl^d}) = 0 
\end{equation}
with coefficients $a_{0},\ldots,a_d$ such that $a_0a_d\neq 0$ in the field 
$$
\Kinf= \Q(z^{\frac{1}{\starpourexp}}) = \bigcup_{k\in \Z_{\geq 1}} \Q(z^{\frac{1}{k}})
$$ 
of ramified rational functions. A solution of such an equation will be called a $\ellmahl$-Mahler function. If such a solution is a power series (resp.~a Puiseux series, a Hahn series), we will say that it is a $\ellmahl$-Mahler power series (resp.~a $\ellmahl$-Mahler Puiseux series, a $\ellmahl$-Mahler Hahn series).

Although the solutions of Mahler equations are very different in nature from those of differential equations, certain properties bring them close to arithmetic Gevrey series. Indeed, on the one hand, it is well-known that any $\ellmahl$-Mahler series $f \in \Qbar[[z]]$ is an arithmetic Gevrey series of order $0$ \cite[Chapitre 3, Corollaire 8 and Théorème 6]{DumasThese} (but, be careful, if it is not rational, then $f$ is not holonomic  \cite{RandeThese,BCR13}, and isn't even differentially algebraic over $\C(z)$ \cite{ADHHLDE}). On the other hand, the famous refinement of the Siegel-Shidlovskii theorem due to F.~Beukers in \cite{Beukers06} and reproved by Y.~André in \cite{Andre14} admits a Mahlerian analogue  proved by P.~Philippon \cite{PhilipponGaloisNA} and supplemented
 by B.~Adamczewski and the first author in \cite{BorisFaverjonMethodeMahler17}, which brings $\ellmahl$-Mahler functions closer to $E$-functions. Note that another proof of Philippon's result, in the spirit of \cite{Andre14}, was subsequently given by L.~Nagy and T.~Szamuely in \cite{NagySzamuelyAGTAndreSolAlg} and that a third proof was given by B.~Adamczewski and the first author in \cite{AF23}.   It is properties like these that have encouraged us to investigate a possible extension of the above purity theorem to Mahler equations.
 
A natural motivation for looking at the growth properties of the coefficients of $\ellmahl$-Mahler series or, more generally, of $\ellmahl$-Mahler Hahn series comes from the Bombieri-Dwork conjecture predicting that the minimal differential equation of a $G$-function comes from geometry. In the light of this conjecture, it is natural to ask whether a $\ellmahl$-Mahler Hahn series whose coefficients have a special growth has a special nature. For results in this direction, we refer to our forthcoming paper~\cite{FaverjonRoquesHGT}.

The theory of Mahler equations is a dynamic and fast-growing research area.  
Since the pioneering work of Mahler in \cite{MahlerArith1929, MahlerArith1930, MahlerUber1930}, numerous articles have been written on Mahler equations, which has known important recent developments; see for instance \cite{pellarinAIMM,NG,NGT,NishiokaNishiokaATDEMF,BCR13,BBC15,PhilipponGaloisNA,BCZ16,BorisAboutMahler,BorisFaverjonMethodeMahler17,BorisFaverjonMethodeMahler,CompSolMahlEq,DHRMahler,Ro15,BorisMahlerSelecta,BeckerConj,SchafkeSinger,ADHHLDE,RoquesLSMS,ArrecheZhangMahlerDiscreteResidues,medvedev2022skewinvariantcurvesalgebraicindependence,BorisBellSmertnigGap,AF23,PouletDensity,AF24_SevVar,AF24_EM,FaverjonRoquesHahnMahlAlgoAspects,FaverjonPoulet2} and the references therein, to name but a few recent articles. We believe the results presented in this paper will be useful for further work on Mahler equations.

The main results of the present paper are described in Sections~\ref{sec:solving mahler equations intro} and~\ref{section:purity theo intro} below. 
In Section~\ref{sec:solving mahler equations intro}, we outline our main results about the structure of the solutions of $\ellmahl$-Mahler equations at $0$. Theorem~\ref{thm:decomp_chi}, which is the outcome of this study, shows that any $\ellmahl$-Mahler equation has a full basis of solutions consisting of what we call generalized  $\ellmahl$-Mahler series. This result, of independent interest, is a necessary prerequisite for the statement and proof of our purity theorem, to which Section~\ref{section:purity theo intro} is devoted.

\subsection{Solving $\ellmahl$-Mahler equations}\label{sec:solving mahler equations intro}

\subsubsection{Hahn series and $\ellmahl$-Mahler equations}\label{sec:intro hahn and mahler}
Hahn series are a key ingredient for solving $\ellmahl$-Mahler equations of the form \eqref{eq: mahler1}. We let $\Hahn=\Qbar((z^{\QQ}))$ be the field of Hahn series with coefficients in $\Qbar$ and with value group $\QQ$. This field contains the field 
$$\puis=\Qbar((z^{\frac{1}{\starpourexp}}))=\bigcup_{k \in \Z_{\geq 1}} \Qbar((z^{\frac{1}{k}}))
$$ 
of Puiseux series as a subfield but it is much bigger. Roughly speaking,  Hahn series are a generalization of Puiseux series allowing arbitrary exponents of the indeterminate as long as the set that supports them is well-ordered; we refer to Section~\ref{sec hahn} for details. The interest of Hahn series in our context lies in the following result: 
the difference field $(\Hahn,\malop{\ellmahl})$, where $\malop{\ellmahl}$ is the field automorphism of $\Hahn$ sending $f (z)$ on $f(z^{\ellmahl})$, has a difference ring extension $(\mathcal{R},\malop{\ellmahl})$ with field of constants $\mathcal{R}^{\malop{\ellmahl}}=\{f \in \mathcal{R} \ \vert \  \malop{\ellmahl}(f)=f\}$ equal to $\Qbar$ such that
\begin{itemize}
 \item for any $c \in \Qbar^{\times}$, there exists an invertible element $e_{c} \in \mathcal{R}$ satisfying $\malop{\ellmahl}(e_{c})=ce_{c}$;
\item there exists $\logm \in \mathcal{R}$ satisfying $\malop{\ellmahl}(\logm)=\logm+1$; 
\item any $\ellmahl$-Mahler equation of the form \eqref{eq: mahler1} has a full basis\footnote{We say that a $\ellmahl$-Mahler equation of order $d$  has a  ``full basis'' of solutions of a given form, if it has $d$ solutions of the given form that are $\Qbar$-linearly independent.} of solutions $y_{1},\ldots,y_{d} \in \mathcal{R}$ of the form 
\begin{equation}\label{form yi intro}
 y_{i} = \sum_{(c,j) \in \cj}  f_{i,c,j} e_{c} \logm^{j}\end{equation}
 where the sum has finite support contained in 
 $$
 \cj=\Qbar^{\times}\times\Z_{\geq 0}
 $$
and where each $f_{i,c,j}$ belongs to $\Hahn$.
\end{itemize}
See for instance~\cite[Lemma~23]{RoquesFrobForMahler}. 

\subsubsection{Generalized $\ellmahl$-Mahler series and $\ellmahl$-Mahler equations}\label{sec:intro gpms} The first main result of this paper -- namely Theorem~\ref{thm:decomp_chi} below -- ensures that the Hahn series $f_{i,c,j}$ involved in \eqref{form yi intro} have a very special form: 
they are linear combinations with coefficients in the ring of $\ellmahl$-Mahler Puiseux series of a family of Hahn series $(\xi_{\tuple})_{\tuple \in \ala}$ defined as follows.  The set $\ala$ is given by 
$$
\ala= \bigcup_{t \in \Z_{\geq 0}} \alat{t}
\text{ \ \ \ where \ \ \ }
\alat{t}= \Z_{\geq 0}^{t} \times (\Qbar^\times)^{t} \times \QQ_{> 0}^{t}.
$$
\begin{notation}\label{notation:omega, alpha, etc}
Throughout this paper, unless stated otherwise, we let $\tuple$ be an element of $\ala$. We will set $\tuple=(\balpha,\lambd,\a)$ with $\balpha=(\alpha_1,\ldots,\alpha_t) \in \Z_{\geq 0}^{t}$, $\lambd=(\lambda_1,\ldots,\lambda_t) \in (\Qbar^\times)^{t}$ and $\a=(a_1,\ldots,a_t) \in \QQ_{> 0}^{t}$ where $t$ is the unique element of $\Z_{\geq 0}$ such that $\tuple \in \alat{t}$.
When several elements of $\ala$ are considered simultaneously, we will use variants such as  $\tuple'=(\balpha',\lambd',\a')$ with $\balpha'=(\alpha'_1,\ldots,\alpha'_{t'}) \in \Z_{\geq 0}^{t'}$, $\lambd'=(\lambda'_1,\ldots,\lambda'_{t'}) \in (\Qbar^\times)^{t'}$ and $\a'=(a'_1,\ldots,a'_{t'}) \in \QQ_{> 0}^{t'}$. 
\end{notation}
If $t \in \Z_{\geq 1}$, we consider the Hahn series defined by
\begin{multline}\label{eq:def_chi intro}
\xi_{\tuple}(z) 
=\\ 
\sum_{k_1,\ldots,k_t \geq 1} k_{1}^{\alpha_{1}} \cdots k_{t}^{\alpha_{t}} \lambda_1^{k_1}\lambda_2^{k_1+k_2}\cdots\lambda_t^{k_1+\cdots+k_t} z^{-\frac{a_1}{p^{k_1}} - \frac{a_2}{p^{k_1+k_2}}  - \cdots - \frac{a_t}{p^{k_1+k_2+\cdots + k_t}}} \in \Hahn.
\end{multline}
 When $t=0$, the sets $\Z_{\geq 0}^{t}$, $(\Qbar^\times)^{t}$ and $\QQ_{> 0}^{t}$ have just one element, namely the empty vector $()$, so  $\alat{0}=\{((),(),())\}$ and we set
$$
\xi_{((),(),())}(z)= 1.
$$ 
The fact that the $\xi_{\tuple}$ are indeed well-defined Hahn series is established in Section~\ref{sec: xi well def hahn series}.

\begin{defi}\label{defi:gen_mahl_series}
 A generalized $\ellmahl$-Mahler series is an element of $\mathcal R$ of the form 
\begin{equation}\label{eq:generalized_mahler_series_bis}
\sum_{ (c,j) \in  \cj}\sum_{\tuple\in \ala} 
f_{c,j,\tuple} \xi_{\tuple}e_{c} \logm^{j} 
\end{equation}
where the sums have finite support and where the $f_{c,j,\tuple} \in \puis$ are $\ellmahl$-Mahler Puiseux series. 
\end{defi}
\begin{theo}\label{thm:decomp_chi}
Any $\ellmahl$-Mahler equation of the form \eqref{eq: mahler1} has a full basis of generalized $\ellmahl$-Mahler series solutions, {\it i.e.},   it has $d$ generalized $\ellmahl$-Mahler series solutions $y_{1},\ldots,y_{d} \in \mathcal R$ that are $\Qbar$-linearly independent. 
\end{theo}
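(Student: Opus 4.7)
My plan starts from the existence result recalled in Section~\ref{sec:intro hahn and mahler}: any $\ellmahl$-Mahler equation \eqref{eq: mahler1} already admits a full basis of solutions in $\mathcal{R}$ of the shape $y_i = \sum_{(c,j) \in \cj} f_{i,c,j} e_c \logm^j$ with $f_{i,c,j} \in \Hahn$. The theorem therefore reduces to showing that one can choose such a basis in which each $f_{i,c,j}$ lies in the $\puis$-module $\bigoplus_\tuple \puis \cdot \xi_\tuple$ generated by the Hahn series of \eqref{eq:def_chi intro}. Plugging the ansatz into the equation and using $\malop{\ellmahl}(e_c) = c e_c$ and $\malop{\ellmahl}(\logm) = \logm + 1$, the coefficients $f_{i,c,j}$ are seen to satisfy twisted Mahler equations, homogeneous for the top indices $j$ and inhomogeneous for the remaining ones, with right-hand sides built from the already-constructed lower-order coefficients. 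The problem thus splits into two tasks: first, to show that every twisted Mahler equation has a full basis of solutions in $\bigoplus_\tuple \puis \cdot \xi_\tuple$; and second, to show that this module is stable under the solution of the relevant twisted inhomogeneous equations whenever the right-hand side already lies in it.

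The heart of the argument will be a reduction theorem for Mahler systems, in the spirit of the Hukuhara--Turrittin--Levelt classification for meromorphic linear ODEs. My plan is to rewrite the Mahler equation as a system $\malop{\ellmahl} Y = A Y$ with $A \in \GL_d(\Kinf)$ and, by a sequence of gauge transformations over $\puis$ (and over progressively larger rings produced along the way), bring $A$ to a block-triangular normal form whose diagonal blocks are of the specific first-order shape that the $\xi_\tuple$ are designed to solve. The Newton polygon of the Mahler equation should control the appearance of ramified exponents and force their denominators to be powers of $\ellmahl$, producing the values $a_i \in \QQ_{>0}$; the spectra of successive diagonal blocks should contribute the constants $\lambda_i \in \Qbar^\times$; and multiplicities in the Newton data should translate into the integer exponents $\alpha_i$, in loose analogy with the way Jordan blocks yield logarithmic powers in the tame case.

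The emergence of the $\xi_\tuple$ themselves will then become transparent from the functional identity
\begin{equation*}
(\malop{\ellmahl} - \lambda_1 \cdots \lambda_t)\, \xi_\tuple \;=\; \lambda_1 \cdots \lambda_t \, z^{-a_1}\, \xi_{\tuple'},
\end{equation*}
where $\tuple' = ((\alpha_2, \ldots, \alpha_t), (\lambda_2, \ldots, \lambda_t), (a_2, \ldots, a_t))$, which one verifies by a direct manipulation of the defining sum \eqref{eq:def_chi intro} (the factors $k_i^{\alpha_i}$ arise from a $\lambda_i \partial_{\lambda_i}$ bookkeeping of multiplicities). Read in reverse, this identity says that $\xi_\tuple$ is obtained by solving a first-order inhomogeneous Mahler equation whose right-hand side is $\xi_{\tuple'}$ times a monomial, and iterating this construction $t$ times produces the whole family. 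This dovetails with the reduction theorem: each descent along a block of the normal form introduces a new $\xi$-factor, and the final solution of the original system appears as a $\puis$-combination of the $\xi_\tuple$.

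I expect the principal obstacle to be the reduction theorem itself. The Mahlerian setting differs from the differential one because $\malop{\ellmahl}$ acts nonlinearly on the variable, so the classical Newton-polygon and gauge-transformation machinery must be redeveloped; in particular, one must carefully control how the ramification generated by the successive gauge transformations interacts with the $\ellmahl$-adic denominators of the exponents, and ensure that the reduction procedure terminates on blocks of the desired shape. A second, more technical obstacle is the verification that the series $\xi_\tuple$ have well-ordered support in $\QQ$, so that they genuinely define elements of $\Hahn$; this amounts to the combinatorial statement that the exponents $-\sum_i a_i / \ellmahl^{k_1 + \cdots + k_i}$ accumulate only at $0^-$ as $(k_1, \ldots, k_t)$ ranges over $\Z_{\geq 1}^t$, and it is addressed in Section~\ref{sec: xi well def hahn series}.
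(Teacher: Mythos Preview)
Your plan is essentially the paper's: convert to a system, reduce by gauge transformations to a block-triangular form with constant diagonal blocks, and show that the residual inhomogeneous equations are solved by the $\xi_\tuple$. Two points, however, need attention.

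First, your functional identity is only valid when $\alpha_1=0$. In general (Lemma~\ref{lem:phichi-lamddchi}),
\[
\malop{\ellmahl}(\xi_\tuple) - (\lambda_1\cdots\lambda_t)\,\xi_\tuple \;=\; (\lambda_1\cdots\lambda_t)\Bigl(z^{-a_1}\xi_{\tuplespt} + \sum_{j=0}^{\alpha_1-1}\tbinom{\alpha_1}{j}\xi_{((j,\alpha_2,\ldots,\alpha_t),\lambd,\a)}\Bigr),
\]
so lower-$\alpha_1$ terms appear as well. This does not break the strategy (the paper absorbs all such terms into the filtered algebra $(\V_s)_s$ of Section~\ref{sec:space_V} and proves the key closure statement as Lemma~\ref{lem phiFA1-A2F}), but it means your attribution of the $\alpha_i$ to ``multiplicities in the Newton data'' is off: they arise from the nilpotent parts of the constant diagonal blocks via the Dunford decomposition, not from the slopes.

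Second, and more seriously, Definition~\ref{defi:gen_mahl_series} requires the Puiseux coefficients $f_{c,j,\tuple}$ to be themselves $\ellmahl$-Mahler, not just elements of $\puis$; your reduction ``each $f_{i,c,j}$ lies in the $\puis$-module $\bigoplus_\tuple \puis\cdot\xi_\tuple$'' omits this and your plan never returns to it. The paper secures this by organising the reduction so that the Puiseux part is a single gauge $F_1\in\GL_d(\puis)$ satisfying $\malop{\ellmahl}(F_1)=AF_1\Theta^{-1}$ with both $A$ and $\Theta$ over $\Kinf$; the $\Kinf$-span of the entries of $F_1$ is then finite-dimensional and $\malop{\ellmahl}$-stable, forcing every entry to be $\ellmahl$-Mahler (Section~\ref{sec:proof_thm_decomp_chi}). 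Your ``plug in the ansatz and solve twisted equations for the $f_{i,c,j}$'' route does not obviously deliver this, since the right-hand sides of those inhomogeneous equations already involve Hahn series rather than elements of $\Kinf$.
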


We will obtain this result as a by-product of the construction of fundamental matrices of solutions of $\ellmahl$-Mahler systems with a very specific form, which are reminiscent of the fundamental matrices of solutions of differential systems given by Turrittin's theorem; as this requires further notations, we say no more about this result of independent interest here and refer the reader to Section~\ref{sec: fund mat sol} and, especially, to Theorem~\ref{theo: fund syst sol princ}. We point out that Section~\ref{subsec:rks proof theo fund syst sol princ} provides a discussion of the similarities and differences between $\ellmahl$-Mahler systems and both differential and $q$-difference systems.

\begin{rem}
	${\rm (a)}$ We will prove in Section~\ref{sec: the xi are p mahler} that the Hahn series $\xi_{\tuple}$ are $p$-Mahler. Since the property of being a solution of a $p$-Mahler equation is stable under sums and products, we obtain that any generalized $p$-Mahler series is a solution of a $p$-Mahler equation.

${\rm (b)}$ The family $(\xi_{\tuple})_{\tuple \in \ala}$ is $\puis$-linearly dependent and, hence, the decomposition \eqref{eq:generalized_mahler_series_bis} is not unique. We address this issue in Section~\ref{sec:standard}. 
\end{rem}

\subsection{Purity Theorem} \label{section:purity theo intro}

Our purity theorem (Theorem~\ref{thm:pureté1} below)
involves growth conditions for generalized $p$-Mahler series inspired by the recent paper \cite{BorisBellSmertnigGap} by B.~Adamczewski, J.~P.~Bell and D.~Smertnig. Let us briefly recall their main result. 

\subsubsection{Growth of the coefficients of $\ellmahl$-Mahler power series}
In \cite{BorisBellSmertnigGap}, the authors study the asymptotic growth of the coefficients of $\ellmahl$-Mahler power series with coefficients in $\Qbar$, as measured by their logarithmic Weil height. Their main result is the following height gap theorem, which shows that there are five different growth behaviors.

\begin{theo}[{\cite[Proposition~5.2]{BorisBellSmertnigGap}}]\label{thm: hgt}
Any $\ellmahl$-Mahler Puiseux series
$f=\sum_{\gamma \in \QQ}f_\gamma z^{\gamma} \in \puis$ satisfies one of the following mutually exclusive properties\footnote{Strictly speaking, this result is only proved for power series and for $\ellmahl$-Mahler equations with coefficients in $\Qbar(z)$, but the extension to Puiseux series and to $\ellmahl$-Mahler equations with coefficients in $\Kinf$ is straightforward.
}:
	\begin{enumerate}
	\item[$\cond{1}$] $\hgt(f_\gamma) \in \mathcal O\cap\Omega(H(\gamma))$;
	
	\item[$\cond{2}$] $\hgt(f_\gamma) \in \mathcal O\cap\Omega(\log^2H(\gamma))$;
	
	\item[$\cond{3}$] $\hgt(f_\gamma) \in \mathcal O\cap\Omega(\log H(\gamma))$;
	
	\item[$\cond{4}$] $\hgt(f_\gamma)  \in \mathcal O\cap\Omega(\log\log H(\gamma))$;
	
	\item[$\cond{5}$] $\hgt(f_\gamma) \in \mathcal O(1)$.
\end{enumerate}
\end{theo}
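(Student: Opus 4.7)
The plan is to reduce Theorem~\ref{thm: hgt} to the special case of a $p$-Mahler power series with coefficients in $\Qbar(z)$, which is precisely the statement of \cite[Proposition~5.2]{BorisBellSmertnigGap}, and then to transfer the resulting dichotomy back via an explicit change of variable.

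First, I would choose $N\ge 1$ large enough that $f\in \Qbar((z^{1/N}))$ and that the coefficients $a_{0},\dots,a_{d}\in \Kinf$ of the $p$-Mahler equation satisfied by $f$ all lie in $\Qbar(z^{1/N})$. Setting $w=z^{1/N}$ and $g(w)=f(w^{N})$, the equation becomes
$$
\sum_{i=0}^{d} a_{i}(w^{N})\,g(w^{p^{i}})=0,
$$
so $g$ is a $p$-Mahler Laurent series in $w$ with coefficients in $\Qbar(w)$. To pass from Laurent to power series, I would let $m\ge 0$ be the pole order of $g$ at $w=0$, set $h(w)=w^{m}g(w)\in\Qbar\llbracket w\rrbracket$, substitute $g(w^{p^{i}})=w^{-mp^{i}}h(w^{p^{i}})$ into the equation above, and clear denominators by multiplying through by $w^{mp^{d}}$. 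This produces a $p$-Mahler equation for the power series $h$ with coefficients in $\Qbar(w)$.

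At this point \cite[Proposition~5.2]{BorisBellSmertnigGap} applies to $h$ and gives one of the five mutually exclusive regimes for $\hgt(h_{n})$ as a function of $H(n)$. To transfer back, I would use the identity $f_{\gamma}=g_{N\gamma}=h_{N\gamma+m}$ together with the elementary estimate $\log H(N\gamma+m)=\log H(\gamma)+O(1)$, valid for any affine change of variable on $\QQ$ and for height tending to infinity. This makes the conditions $\cond{1}$--$\cond{5}$ for $h$ equivalent to the corresponding conditions for $f$ and preserves mutual exclusivity.

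The substantive content is of course hidden in the proposition of Adamczewski--Bell--Smertnig. Its proof would embed $f$ as a component of a vector solution of a first-order $p$-Mahler system $Y(z)=A(z)\,Y(z^{p})$ with $A\in\GL_{d}(\Qbar(z))$, iterate to express each coefficient $f_{n}$ (up to bounded error in logarithmic height) as a fixed component of a matrix product $M_{n_{k-1}}\cdots M_{n_{0}}$ indexed by the base-$p$ digits of $n$, and then classify the possible growth rates of the logarithmic Weil height of such products in a finitely generated subsemigroup of $\GL_{d}$. This last classification, which rests on a genuine height-gap theorem for linear groups, is the main obstacle and is what forces exactly the five regimes listed.
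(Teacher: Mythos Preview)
Your proposal is correct and matches the paper's treatment: the paper does not prove this statement but simply cites \cite[Proposition~5.2]{BorisBellSmertnigGap} for the power-series case over $\Qbar(z)$ and notes in a footnote that the extension to Puiseux series and to coefficients in $\Kinf$ is ``straightforward.'' Your reduction via $w=z^{1/N}$ and the shift $h(w)=w^{m}g(w)$ is exactly that straightforward extension, and your height comparison $\log H(N\gamma+m)=\log H(\gamma)+O(1)$ (together with $H(N\gamma+m)\asymp H(\gamma)$ for the first condition) correctly transfers each of the five mutually exclusive regimes.
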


In this result and throughout this paper, $H(\alpha)$ denotes the Weil height of $\alpha \in \Qbar$ and $\hgt(\alpha)=\log H(\alpha)$ its logarithmic Weil height 
(see \cite{Wald2000} for details and references). Roughly speaking, they measure the ``complexity'' of the algebraic number $\alpha$. For instance, when $\gamma = \frac{a}{b}$ is a rational number, with $a,b \in \Z$ relatively prime, $H(\gamma)=\max\{\vert a \vert, \vert b \vert\}$.
Moreover, for any $(a_\gamma)_{\gamma \in \QQ},(b_\gamma)_{\gamma \in \QQ} \in \RR^{\QQ}$, the notation 
$
a_\gamma \in \mathcal O(b_\gamma)
$
means that there exists $C>0$ such that, for all but finitely many $\gamma \in \QQ$, we have 
$
\vert a_\gamma \vert \leq C \vert b_\gamma \vert
$ and the notation 
$
a_\gamma\in\Omega(b_\gamma)
$
means that there exists $c>0$ such that, for infinitely many $\gamma \in \QQ$, we have $\vert a_\gamma \vert > c \vert b_\gamma \vert$. The notation $a_\gamma \in \mathcal O \cap \Omega(b_\gamma)$ means that $a_\gamma \in \mathcal O(b_\gamma)$ and $a_\gamma \in \Omega(b_\gamma)$.

\subsubsection{Purity theorem}

Theorem~\ref{thm: hgt} reveals five $\mathcal O$-growth conditions for Puiseux series: 
 we say that $f=\sum_{\gamma} f_\gamma z^{\gamma} \in \puis$ satisfies
	\begin{itemize}
	  \item $\condprime{1}$\label{tms:generic} if $\hgt(f_\gamma) \in \mathcal O(H(\gamma))$;
	
	\item $\condprime{2}$\label{tms:hn} if $\hgt(f_\gamma) \in \mathcal O(\log^2H(\gamma))$;
	
	\item $\condprime{3}$\label{tms:hlog2} if $\hgt(f_\gamma) \in \mathcal O(\log H(\gamma))$;	
	\item $\condprime{4}$\label{tms:hlog} if $\hgt(f_\gamma) \in \mathcal O(\log\log H(\gamma))$;
	
	\item $\condprime{5}$\label{tms:hloglog} if $\hgt(f_\gamma) \in \mathcal O(1)$.
	\end{itemize}

We extend these $\mathcal O$-growth conditions to generalized $\ellmahl$-Mahler series as follows. 

\begin{defi}	\label{defi:def Cr gms}
We say that a generalized $\ellmahl$-Mahler series $f$ satisfies $\condpuisprime{\indC}$ for some $\indC \in \{1,2,3,4,5\}$ if it admits a decomposition of the form \eqref{eq:generalized_mahler_series_bis} such that all the Puiseux series $f_{c,j,\tuple}$ satisfy $\condprime{\indC}$. 
\end{defi}

It is clear that, for any $r \in \{1,\ldots,4\}$, the condition $\condpuisprime{r+1}$ is stronger than $\condpuisprime{r}$ in the sense that any generalized $\ellmahl$-Mahler series satisfying $\condpuisprime{r+1}$ also satisfies $\condpuisprime{r}$. Moreover, it follows from Definition~\ref{defi:gen_mahl_series} and Theorem~\ref{thm: hgt} that any generalized $\ellmahl$-Mahler series satisfies $\condpuisprime{1}$. 
Therefore, the five growth conditions $\condpuisprime{1}$ to $\condpuisprime{5}$ induce the following filtration on the set of generalized $\ellmahl$-Mahler series: 
\begin{align*}
 &\{\text{generalized $\ellmahl$-Mahler series}\} \\
 &\hskip 20 pt =  \{\text{generalized $\ellmahl$-Mahler series satisfying $\condpuisprime{1}$}\} \\
 &\hskip 40 pt  \supsetneq   \{\text{generalized $\ellmahl$-Mahler series satisfying $\condpuisprime{2}$}\} \\
  &\hskip 60 pt \supsetneq   \{\text{generalized $\ellmahl$-Mahler series satisfying $\condpuisprime{3}$}\} \\
   &\hskip 80 pt \supsetneq   \{\text{generalized $\ellmahl$-Mahler series satisfying $\condpuisprime{4}$}\} \\
    &\hskip 100 pt \supsetneq   \{\text{generalized $\ellmahl$-Mahler series satisfying $\condpuisprime{5}$}\}. 
\end{align*}
This filtration has five pieces. We are now ready to state our purity theorem guaranteeing that the membership of a generalized $\ellmahl$-Mahler series to one of the three largest pieces of this filtration propagates to any other generalized Mahler series solution of its minimal Mahler equation.

\begin{theo}[Purity Theorem]\label{thm:pureté1}
	Let $f$ be a generalized $\ellmahl$-Mahler series satisfying $\condpuisprime{\indC}$ for some $\indC \in \{1,2,3\}$. Then, the minimal $\ellmahl$-Mahler equation of $f$ over $\Kinf$ has a full basis of generalized $\ellmahl$-Mahler series solutions satisfying $\condpuisprime{\indC}$.
\end{theo}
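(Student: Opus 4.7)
I would begin by reformulating the statement. Since Theorem~\ref{thm:decomp_chi} ensures that the solution space $V$ of the minimal operator $\Psi$ annihilating $f$ consists entirely of generalized $\ellmahl$-Mahler series, and since the set of generalized $\ellmahl$-Mahler series satisfying $\condpuisprime{r}$ is closed under $\Qbar$-linear combinations (the condition $\condprime{r}$ on Puiseux series being preserved under $\Qbar$-linear combinations as $\hgt$ is subadditive up to a bounded error), producing a full basis of $V$ made of generalized Mahler series satisfying $\condpuisprime{r}$ is equivalent to showing that the $\Qbar$-subspace
\[
V_r:=\{y\in V : y\text{ satisfies }\condpuisprime{r}\}
\]
coincides with $V$.

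The strategy is then Galois-theoretic. Let $G$ denote the Mahler difference Galois group of $\Psi y=0$ over $\Kinf$, acting on $V$. By the difference Galois correspondence, any proper $G$-stable $\Qbar$-subspace of $V$ containing $f$ would be the solution space of a right-divisor of $\Psi$ of strictly smaller order that still annihilates $f$, contradicting the minimality of $\Psi$. It thus suffices to prove that $V_r$ is $G$-stable, which is the \textbf{invariance claim}: for every $\sigma \in G$ and every generalized $\ellmahl$-Mahler series $y \in V_r$, the image $\sigma(y)$ again satisfies $\condpuisprime{r}$.

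To establish the invariance claim I would analyse the action of $G$ separately on the distinguished generators and on the Puiseux layer of the Picard--Vessiot ring. On the transcendental generators the action is explicit: $\sigma(e_c)=\chi_c(\sigma)\,e_c$ for characters $\chi_c(\sigma)\in\Qbar^{\times}$; $\sigma(\logm)=\logm+\tau(\sigma)$ for some $\tau(\sigma)\in\Qbar$; and $\sigma(\xi_{\tuple})$ is a finite $\Qbar$-linear combination of monomials in the $\xi_{\tuple'}$ dictated by the defining recursion. On the Puiseux subring, $\sigma$ restricts to a $\Kinf$-algebra automorphism, which, applied to $g=\sum g_\gamma z^\gamma \in \puis$, produces $\sum (\sigma g_\gamma)\,\zeta_\gamma(\sigma)\, z^\gamma$, where $\sigma g_\gamma$ is a $\Qbar$-Galois conjugate of $g_\gamma$ and $\zeta_\gamma(\sigma)$ is a root of unity arising from the ramification $z^{1/k}\mapsto \zeta\,z^{1/k}$. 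Both operations preserve the Weil height of each individual coefficient, so $\condprime{r}$ is preserved coefficient by coefficient. Substituting these formulas into a decomposition of $y$ and regrouping into a standard form (in the sense of Section~\ref{sec:standard}) produces a decomposition of $\sigma(y)$ whose Puiseux coefficients are bounded $\Qbar$-linear combinations of Galois conjugates of the original $f_{c,j,\tuple}$, hence still satisfy $\condprime{r}$.

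The principal obstacle is the regrouping step: one must verify that the substitutions $z \mapsto z^{1/\ellmahl^k}$ arising from $\malop{\ellmahl}$-recursions on the $\xi_{\tuple}$, combined with the collection of Puiseux coefficients into standard form, do not produce Puiseux series whose coefficients violate $\condprime{r}$. This is precisely where the restriction $r\in\{1,2,3\}$ intervenes: these three classes are robust under the operations involved, whereas $\condprime{4}$ and $\condprime{5}$ are not, in agreement with the announced failure of purity for $r\in\{4,5\}$.
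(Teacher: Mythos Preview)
Your approach has a fundamental gap in the description of how the difference Galois group $G$ acts on the Puiseux coefficients $f_{c,j,\tuple}$. You write that on the Puiseux subring, $\sigma\in G$ sends $g=\sum g_\gamma z^\gamma$ to $\sum(\sigma g_\gamma)\zeta_\gamma(\sigma)z^\gamma$, where $\sigma g_\gamma$ is a Galois conjugate of the algebraic number $g_\gamma$. This is incorrect: the difference Galois group is \emph{not} $\Gal(\Qbar/\QQ)$ or $\Gal(\puis/\Kinf)$ acting coefficient-by-coefficient. It consists of $\Kinf$-algebra automorphisms of the Picard--Vessiot ring commuting with $\malop{\ellmahl}$, and its action is determined by $\sigma(Y_0)=Y_0[\sigma]$ for $[\sigma]\in\GL_d(\Qbar)$, where $Y_0=F_1F_2e_C$ is the fundamental matrix of Theorem~\ref{theo: fund syst sol princ}. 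Concretely, $\sigma$ applied to an entry of $F_1$ is a $\Qbar$-linear combination of products of entries of $F_1$, $F_2$, and $e_C$, then projected back; it does \emph{not} preserve heights of individual Taylor coefficients in any direct way. For example, a rank-one Mahler module with Puiseux solution $g$ has $\sigma(g)=c_\sigma g$ for a constant $c_\sigma\in\Qbar^\times$, which has nothing to do with conjugating the coefficients of $g$.

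Because of this, your ``invariance claim'' that $V_r$ is $G$-stable is not justified, and in fact proving it is essentially equivalent to the theorem itself. You acknowledge this when you call the regrouping step the ``principal obstacle'' and assert without argument that it works for $r\le 3$; but this is exactly the content that needs proof. The paper takes a completely different route, with no Galois theory: it shows (Proposition~\ref{prop growth denom}) that if $f$ satisfies $\condpuisprime{2}$ (resp.\ $\condpuisprime{3}$) then the $\ellmahl$-Mahler denominator $\denommahl{f}$ has all nonzero roots in $\mathcal U$ (resp.\ $\mathcal U_\ellmahl$), and then (Proposition~\ref{theo avec a0}) that any equation whose constant coefficient has this root property admits a full basis satisfying $\condpuisprime{r}$. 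Both directions rely crucially on the height-gap theorem of Adamczewski--Bell--Smertnig (Theorem~\ref{theo abs 7.1}) relating the growth classes $\condprime{2},\condprime{3}$ to the location of the roots of the Mahler denominator; this arithmetic input is what makes $r\in\{2,3\}$ work and is entirely absent from your sketch.
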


\begin{rem}
		{\rm (i)} Theorem~\ref{thm:pureté1} does not extend to $r \in \{4,5\}$, cf. Section~\ref{sec:final_rem}.
		
	{\rm (ii)} Considering the \textit{minimal} $\ellmahl$-Mahler equation is of course essential for the conclusions of Theorem~\ref{thm:pureté1} to hold. The constant function $1$ satisfies $\condpuisprime{3}$ and is solution of the equation
	\begin{equation}\label{eq:non_mini}
	(1-2z)y(z) + (-1+2z-z^2+3z^3-3z^4)y(z^2) +(z^2-3z^3+3z^4)y(z^4)=0
	\end{equation}
%
which is not minimal with respect to $1$. This equation also has a solution
	\begin{equation}\label{eq:ex_Laurent}
	-z^{-1}+3z+6z^2+6z^3+21z^4+21z^5+60z^6 +99z^7+234z^8+\mathcal O(z^{9})
	\end{equation}
	which does not satisfy $\condpuisprime{3}$, as one could prove. 
	
	{\rm (iii)} Instead of considering the generalized $\ellmahl$-Mahler series as $\puis$-linear combinations of the $\xi_{\tuple} e_{c} \logm^{j}$, we could view them as $\Hahn$-linear combinations of the $e_{c} \logm^{j}$, as in \eqref{form yi intro}. From this perspective, it is natural to study the arithmetic properties of the coefficients of the Hahn series involved in such linear combinations. This is the subject of the forthcoming paper \cite{FaverjonRoquesHGT}.
	\end{rem}

\subsection{Organization of the paper}
In Section~\ref{sec hahn}, we first recall the definition of the field of Hahn series. We then prove that the $\xi_{\tuple}(z)$ introduced in Section~\ref{sec:intro gpms} are well-defined Hahn series, that they can be constructed inductively and that they are $\ellmahl$-Mahler. We also introduce a filtered algebra associated with them, which will turn out to play an important role in the remainder of the paper. 
In Section~\ref{trad syst equ mod}, we recall the dictionary between Mahler equations, systems and modules, that will be useful for the next sections. Section~\ref{sec: fund mat sol} is mainly devoted to the construction of fundamental matrices of solutions at $0$ for $\ellmahl$-Mahler systems. The main result of this section, Theorem~\ref{theo: fund syst sol princ}, forms the cornerstone of the present paper. Theorem~\ref{thm:decomp_chi} is deduced from Theorem~\ref{theo: fund syst sol princ} at the end of Section~\ref{sec: fund mat sol}. In Section~\ref{sec:standard}, we introduce the notion of standard decomposition for generalized $\ellmahl$-Mahler series, which aims to address the non-uniqueness of the decomposition \eqref{eq:generalized_mahler_series_bis}. The remainder of the paper is mainly dedicated to establishing Theorem~\ref{thm:pureté1}. We begin in Section~\ref{sec:sketch proof purity theo} with an outline of the proof, which also serves to motivate the material developed in the subsequent sections. Section~\ref{sec:mahler denom} reviews key aspects of the notion of $\ellmahl$-Mahler denominator for $\ellmahl$-Mahler power series, as introduced in \cite{BorisBellSmertnigGap}, and explores its extension to generalized $\ellmahl$-Mahler series. 
The result presented there will be instrumental to the proof of Theorem~\ref{thm:pureté1}, which is given in Section~\ref{sec:proof purity theo}. 
Finally, in Section~\ref{sec:final_rem}, we give an example showing that Theorem~\ref{thm:pureté1} cannot be extended to conditions $\condpuisprime{4}$ or $\condpuisprime{5}$.
\vskip 5pt 
\noindent {\bf Acknowledgements.} This work grew out of a question posed by Boris Adamczewski, and benefited from discussions we had with him throughout the genesis of this article. Our thanks go to him.  We are very grateful to the referees for their insightful and constructive reports, which helped us improve the quality and clarity of the manuscript. The work of the second author was supported by the ANR De rerum natura project, grant ANR-19-CE40-0018 of the French Agence Nationale de la Recherche.

\section{Hahn series} \label{sec hahn}

\subsection{Definitions and generalities} 
We recall that an element of the field of Hahn series $\Hahn$ is an $f=(f_{\gamma})_{\gamma \in \QQ} \in \Qbar^{\QQ}$ whose support 
$$
\supp(f)=\{\gamma\in \QQ \ \vert \ f_{\gamma}\neq 0\}
$$ 
is well-ordered ({\it i.e.}, any nonempty subset of $\supp(f)$ has a least element) with respect to the restriction to $\supp(f)$ of the usual order on $\QQ$. Such an element  of $
\Hahn
$
is usually (and will be) denoted by 
$$
f=\sum _{{\gamma \in \QQ }}f_{\gamma}z^{\gamma}. 
$$
The field of Hahn series contains the field $\puis$ of Puiseux series as a subfield but it is much bigger. A typical example of Hahn series which is not a Puiseux series is given by 
$$
\xi_{((0),(1),(1))}=\sum_{k \geq 1} z^{-\frac{1}{\ellmahl^{k}}}.
$$ 

We recall that the sum and product of two elements 
$f=\sum _{{\gamma\in \QQ }}f_{\gamma}z^{\gamma}$ and 
$g=\sum _{{\gamma\in \QQ }}g_{\gamma}z^{\gamma}$ of $\Hahn$ 
are given by
$$
f+g=\sum _{{\gamma\in \QQ }}(f_{\gamma}+g_{\gamma})z^{\gamma}
\text{\ \ and \ \ }
fg=\sum _{{\gamma\in \QQ }}\left(\sum _{{\gamma'+\gamma''=\gamma}}f_{{\gamma'}}g_{{\gamma''}}\right)z^{\gamma}.
$$
The fact that the supports of $f$ and $g$ are well-ordered implies that there are only finitely many $(\gamma',\gamma'') \in \QQ \times \QQ$ such that $\gamma'+\gamma''=\gamma$ and $f_{{\gamma'}}g_{{\gamma''}}\neq 0$, so the sums $\sum _{{\gamma'+\gamma''=\gamma}}f_{{\gamma'}}g_{{\gamma''}}$ are meaningful.

We say that a family $ (f_{i})_{i\in I}$ of elements of $\Hahn$ is summable if the following properties are satisfied~: 
\begin{itemize}
 \item the set $\bigcup_{i \in I } \supp(f_{i})$ is well-ordered;
 \item for any $\gamma \in \QQ$, the set 
$\{i \in I \ \vert \  \gamma \in \supp(f_{i})\}$ is finite.
\end{itemize}
 In this case, we can  legitimately set  
$$
\sum_{i\in I} f_{i} = \sum_{\gamma \in \QQ} \left(\sum_{i \in I} f_{i,\gamma}\right) z^{\gamma} \in \Hahn
$$
where $f_{i}=\sum_{\gamma \in \QQ} f_{i,\gamma} z^{\gamma}$. 
We let $\Hahn^{<0}$ be the set made of the $f \in \Hahn$ such that $\supp(f) \subset \QQ_{<0}$.
We have the following elementary result.

\begin{lem}[{\cite[Lemma~23]{RoquesFrobForMahler}}]\label{lem: phik summable}
For any $f \in \Hahn^{<0}$, the family $(\malop{\ellmahl}^{k}(f))_{k \leq -1}$ of elements of $\Hahn$ is summable.
\end{lem}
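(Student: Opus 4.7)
The plan is to unwind the definitions. Given $f \in \Hahn^{<0}$, write $f = \sum_{\gamma \in \QQ} f_\gamma z^\gamma$ with $\supp(f) \subset \QQ_{<0}$. Since $\malop{\ellmahl}^k(f) = \sum_\gamma f_\gamma z^{\ellmahl^k \gamma}$, we have $\supp(\malop{\ellmahl}^k(f)) = \ellmahl^k \cdot \supp(f)$. The case $f = 0$ is trivial, so I would assume $f \neq 0$ and set $\gamma_0 := \min \supp(f) < 0$, which exists because $\supp(f)$ is well-ordered in $\QQ_{<0}$. Two things have to be checked: (a) for every $\gamma \in \QQ$, only finitely many $k \leq -1$ satisfy $\gamma \in \supp(\malop{\ellmahl}^k(f))$; (b) the set $U := \bigcup_{k \leq -1} \ellmahl^k \cdot \supp(f)$ is well-ordered.

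For (a), fix $\gamma \in \QQ$. The membership $\gamma \in \ellmahl^k \cdot \supp(f)$ means $\ellmahl^{-k}\gamma \in \supp(f) \subset [\gamma_0,0)$. If $\gamma \geq 0$ no $k \leq -1$ works; if $\gamma < 0$, the constraint $\ellmahl^{-k}|\gamma| \leq |\gamma_0|$ bounds $-k$ from above, leaving only finitely many admissible $k \leq -1$.

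For (b), which is the main point, let $S \subset U$ be nonempty. I would fix any $s_0 \in S$ (so $s_0 < 0$) and focus on $S' := S \cap (-\infty, s_0]$; since the minimum of $S$, if it exists, coincides with the minimum of $S'$, it suffices to show $S'$ has a least element. Any $s \in S'$ lies in some $\ellmahl^k \cdot \supp(f)$ with $\ellmahl^k \gamma_0 \leq s \leq s_0 < 0$, which forces $\ellmahl^k \geq s_0/\gamma_0 > 0$, hence bounds $k$ from below (and $k \leq -1$ from above). Only finitely many values of $k$ therefore contribute to $S'$, and for each such $k$, $S' \cap (\ellmahl^k \cdot \supp(f))$ is a subset of the well-ordered set $\ellmahl^k \cdot \supp(f)$ (the latter being well-ordered because multiplication by a positive rational preserves order). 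Thus $S'$ is a finite union of well-ordered sets, hence well-ordered, and in particular admits a minimum.

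I do not expect a substantive obstacle: the heart of the argument is the observation that as $k \to -\infty$, the sets $\ellmahl^k \cdot \supp(f)$ accumulate near $0$ (since $\ellmahl^k \gamma_0 \to 0^-$), so only finitely many of them can reach below a prescribed negative level, which both prevents infinite descending chains in $U$ and bounds the number of $k$'s hitting any fixed $\gamma$.
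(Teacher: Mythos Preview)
Your proof is correct. The paper does not actually prove this lemma; it merely cites it as \cite[Lemma~23]{RoquesFrobForMahler}, so there is no in-paper argument to compare against. Your approach---bounding the relevant indices $k$ by exploiting that $\ellmahl^k\cdot\supp(f)\subset[\ellmahl^k\gamma_0,0)$ accumulates at $0^-$ as $k\to-\infty$, and then reducing to a finite union of well-ordered sets---is exactly the natural one and would be the expected proof of the cited lemma.
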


\subsection{The Hahn series $\xi_{\tuple}$ and a related filtered algebra}\label{sec:les chi}

This section is devoted to the Hahn series $(\xi_{\tuple})_{\tuple \in \ala}$ introduced in Section~\ref{sec:intro gpms}.   
We successively show that they are indeed well-defined Hahn series, that they can be constructed inductively—a property that will be used several times throughout the rest of the paper—and that they are $\ellmahl$-Mahler.
We conclude this section by introducing a filtered algebra associated with these Hahn series, which will play a central role in the present paper.

\subsubsection{The $\xi_{\tuple}$ are well-defined Hahn series} \label{sec: xi well def hahn series}
To prove that the right-hand side of \eqref{eq:def_chi intro} is legitimate and defines a Hahn series, it is sufficient to prove that, 
for any $t \in \Z_{\geq 1}$ and $\a=(a_1,\ldots,a_t) \in \QQ_{> 0}^{t}$, we have~: 
\begin{enumerate}
 \item \label{xi well def 1} the following set is well-ordered:
$$\left\{-\frac{a_1}{\ellmahl^{k_1}} - \frac{a_2}{\ellmahl^{k_1+k_2}}  - \cdots - \frac{a_t}{\ellmahl^{k_1+k_2+\cdots + k_t}} \ \vert \  (k_{1},\ldots,k_{t}) \in \Z_{\geq 1}^{t}\right\};$$  
\item \label{xi well def 2} for any $\gamma \in \QQ$, there are at most finitely many $(k_{1},\ldots,k_{t}) \in \Z_{\geq 1}^{t}$ such that 
$$
\gamma=-\frac{a_1}{p^{k_1}} - \frac{a_2}{p^{k_1+k_2}}  - \cdots - \frac{a_t}{p^{k_1+k_2+\cdots + k_t}}.
$$ 
\end{enumerate}
Let us prove these properties by induction on $t \in \Z_{\geq 1}$.
 \vskip 5pt 
\noindent {\bf Base case $t=1$.} 
Properties \eqref{xi well def 1} and \eqref{xi well def 2} are obviously satisfied when $t=1$. 
 \vskip 5pt 
\noindent {\bf Inductive step $t \rightarrow t+1$.} We assume that properties \eqref{xi well def 1} and \eqref{xi well def 2} 
hold true for some $t \in \Z_{\geq 1}$. Consider $\a=(a_1,\ldots,a_{t+1}) \in \QQ_{> 0}^{t+1}$. 
By induction hypothesis (applied to $(a_{2},\ldots,a_{t+1}) \in \QQ_{>0}^{t}$),  
 $$
f=\sum_{k_2,\ldots,k_{t+1} \geq 1}  z^{-a_1 - \frac{a_2}{\ellmahl^{k_2}}  - \cdots - \frac{a_{t+1}}{\ellmahl^{k_2+\cdots + k_{t+1}}}}
$$ 
is a well-defined element of $\Hahn^{<0}$. 
Lemma~\ref{lem: phik summable} guarantees that  $(\malop{\ellmahl}^{k_{1}}(f))_{k_{1} \leq -1}$ is summable; this means exactly that:
\begin{enumerate}[label=(\roman*)]
\item \label{supp phi k1 f well ordered} the following set is well-ordered:
 \begin{multline*}
 \bigcup_{k_{1} \in \Z_{\geq 1}} \supp(\malop{\ellmahl}^{k_{1}}(f))=\\
\left\{-\frac{a_1}{\ellmahl^{k_1}} - \frac{a_2}{\ellmahl^{k_1+k_2}}  - \cdots - \frac{a_{t+1}}{\ellmahl^{k_1+k_2+\cdots + k_{t+1}}} \ \vert \  (k_{1},\ldots,k_{t+1}) \in \Z_{\geq 1}^{t+1}\right\};
\end{multline*}
\item \label{finitely many k1} for any $\gamma \in \QQ$, there are at most finitely many $k_{1} \in \Z_{\geq 1}$ such that 
\begin{multline*}
 \gamma \in \supp(\malop{\ellmahl}^{k_{1}}(f)) = \\
 \left\{-\frac{a_1}{p^{k_1}} - \frac{a_2}{p^{k_1+k_2}}  - \cdots - \frac{a_{t+1}}{p^{k_1+k_2+\cdots + k_{t+1}}} \ \vert \  (k_{2},\ldots,k_{t+1}) \in \Z_{\geq 1}^{t}\right\}.
\end{multline*}
\end{enumerate}
Of course, \ref{supp phi k1 f well ordered} shows that $\a$ satisfies  \eqref{xi well def 1}. Moreover, combining \ref{finitely many k1} with the fact that, by induction, for any $k_{1} \in \Z_{\geq 1}$, there are at most finitely many $(k_{2},\ldots,k_{t+1}) \in \Z_{\geq 1}^{t}$ such that $\gamma=-\frac{a_1}{p^{k_1}} - \frac{a_2}{p^{k_1+k_2}}  - \cdots - \frac{a_{t+1}}{p^{k_1+k_2+\cdots + k_{t+1}}}$, we get that there are at most finitely many $(k_{1},\ldots,k_{t}) \in \Z_{\geq 1}^{t}$ such that 
$
\gamma=-\frac{a_1}{p^{k_1}} - \frac{a_2}{p^{k_1+k_2}}  - \cdots - \frac{a_t}{p^{k_1+k_2+\cdots + k_t}}
$ and, hence, $\a$ satisfies \eqref{xi well def 2}. This 
concludes the induction. 

\subsubsection{Inductive construction of the Hahn series $\xi_{\tuple}$} \label{sec: xi can be def induct}
An elementary yet key observation is that, for any $t \in \Z_{\geq 1}$ and for any $\tuple=(\balpha,\lambd,\a) \in \alat{t}$, we have 
 \begin{equation}\label{eq:def_xi_summable}
\xi_{\tuple}
=\sum_{k\geq 1} k^{\alpha_1}(\lambda_1\cdots\lambda_t)^{k}\malop{\ellmahl}^{-k}(z^{-a_1}\xi_{\tuplespt})
\end{equation}
where $\tuplespt=(\balphaspt,\lambdspt,\aspt)\in\alat{t-1}$ with $\balphaspt=(\alpha_2,\ldots,\alpha_t)$, $\lambdspt=(\lambda_2,\ldots,\lambda_t)$ and $\aspt=(a_2,\ldots,a_t)$.
Formula \eqref{eq:def_xi_summable} is inductive in the sense that it expresses $\xi_{\tuple}(z)$ with $\tuple \in \alat{t}$ in terms of $\xi_{\tuplespt}(z)$ with $\tuplespt \in \alat{t-1}$. This formula will be used several times in the rest of the paper.

\subsubsection{The Hahn series $\xi_{\tuple}$ are $\ellmahl$-Mahler}\label{sec: the xi are p mahler}
To prove that, for all  $\tuple=(\balpha,\lambd,\a) \in \ala$, the Hahn series $\xi_{\tuple}$ are $\ellmahl$-Mahler, {\it i.e.}, that they are solutions of equations of the form \eqref{eq: mahler1}, we argue by well-founded induction on $\balpha$. 
To do so, we consider the partial well-order $\prec$ on $\bigcup_{t \geq 0} \Z_{\geq 0}^{t}$ defined as follows. For $\balpha=(\alpha_1,\ldots,\alpha_t) \in \Z_{\geq 0}^{t}$ and $\balpha'=(\alpha'_1,\ldots,\alpha'_{t'}) \in \Z_{\geq 0}^{t'}$, we write $\balpha' \prec \balpha$ if one of the following holds:
\begin{itemize}
	\item $t'<t$,
	\item $t'=t\geq 1$ and $\alpha'_1 < \alpha_1$,
\end{itemize}

We are now ready to proceed with the well-founded induction.
Consider an arbitrary $\tuple=(\balpha,\lambd,\a) \in \ala$ and suppose that, for any $\tuple'=(\balpha',\lambd',\a') \in \ala$ such that $\balpha' \prec \balpha$, the Hahn series $\xi_{\tuple'}$ is $\ellmahl$-Mahler. If $\tuple=((),(),())$, then $\xi_{\tuple}(z)=\xi_{((),(),())}(z)=1$ is $\ellmahl$-Mahler. So, we can assume that $\tuple \neq ((),(),())$. The case $j=1$ of Lemma~\ref{lem:phichi-lamddchi} below ensures that there exists $\lambda \in \Q^\times$ such that  
$
\delta:=\malop{\ellmahl}(\xi_{\tuple})-\lambda \xi_{\tuple} 
$
is a $\Kinf$-linear combination of some $\xi_{\tuple'}$ with $\tuple'=(\balpha',\lambd',\a') \in \ala$ such that $\balpha' \prec \balpha$.
Since these $\xi_{\tuple'}$ are $\ellmahl$-Mahler by the induction hypothesis, $\delta$ 
is $\ellmahl$-Mahler  as well. But, it is a basic fact that, if $g$ is a Hahn series such that $g - c\malop{\ellmahl}(g)$ is $\ellmahl$-Mahler for some $c\in \Kinf$, then $g$ is $\ellmahl$-Mahler as well. 
Applying this to $g=\xi_{\tuple}$ and $c=\lambda$, we get that $\xi_{\tuple}$ is  $\ellmahl$-Mahler. This concludes the induction.

To complete the proof, it remains to establish the following lemma, which will also be used later in the paper. From now on, we will use the notation  
$$
\Q[z^{-\frac{1}{\starpourexp}}] = \bigcup_{k\in \Z_{\geq 1}} \Q[z^{-\frac{1}{k}}].
$$

\begin{lem}\label{lem:phichi-lamddchi}
For all $t \in \Z_{\geq 1}$, for all $\tuple=(\balpha,\lambd,\a) \in \alat{t}$, for all $j \in \Z$, 
we have 
\begin{equation}\label{eq:chi zp moins chi}
\xi_{\tuple}(z^{\ellmahl^{j}})-(\lambda_{1} \cdots \lambda_{t})^{j} \xi_{\tuple}(z) = 
 \sum_{\tuple'  \in E } \eta_{\tuple'} \xi_{\tuple'}(z) + \sum_{\tuple'  \in F } p_{\tuple'}(z)\xi_{\tuple'}(z) 
\end{equation}
where 
\begin{itemize}
 \item $E \subset 
(\{0,\ldots,\alpha_{1}-1\} \times \Z_{\geq 0}^{t-1}) \times   (\Qbar^{\times})^{t} \times \QQ_{>0}^t
$ is finite and the $\eta_{\tuple'}$ belong to $\Qbar^{\times}$;
 \item $F \subset \bigcup_{s \in \{0,\ldots,t-1\}} \alat{s}$ is finite and the $p_{\tuple'}(z)$ belong to $\polyramstrictinfzero$. 
\end{itemize}
Furthermore, it is possible to arrange that, for any $\tuple'=(\balpha',\lambd',\a')$ in the support of the sums in \eqref{eq:chi zp moins chi}, $\a'$ is a sub-tuple of $\a$ and, if $j\geq 1$ and $\a\in \Z_{>0}^{t}$, that  $p_{\tuple'}(z)$ belong to $\Qbar[z^{-1}] \setminus \Qbar$.
\end{lem}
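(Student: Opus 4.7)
The plan is to prove the lemma by induction on $t \in \Z_{\geq 1}$, treating $j=0$ trivially and the two cases $j \geq 1$ and $j \leq -1$ via the inductive formula \eqref{eq:def_xi_summable}.

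For $j \geq 1$, I will apply $\malop{\ellmahl}^{j}$ to \eqref{eq:def_xi_summable} and substitute $k'=k-j$, obtaining a sum over $k' \geq 1-j$ which I split at $k'=1$. In the infinite tail $\sum_{k' \geq 1}$, expanding $(k'+j)^{\alpha_1}$ by the binomial theorem and re-applying \eqref{eq:def_xi_summable} produces
\[
(\lambda_1\cdots\lambda_t)^j\,\xi_{\tuple} + (\lambda_1\cdots\lambda_t)^j\sum_{i=1}^{\alpha_1}\binom{\alpha_1}{i}j^i\,\xi_{\tuple^{(i)}},
\]
where $\tuple^{(i)}=((\alpha_1-i,\alpha_2,\ldots,\alpha_t),\lambd,\a)$; these $\tuple^{(i)}$ furnish $E$ with constants $\eta_{\tuple^{(i)}} \in \Qbar^{\times}$ of the required shape. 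The finite head $\sum_{k'=1-j}^{0}$, reindexed by $m=-k' \in \{0,\ldots,j-1\}$, consists of terms $(j-m)^{\alpha_1}(\lambda_1\cdots\lambda_t)^{j-m}z^{-a_1\ellmahl^m}\,\xi_{\tuplespt}(z^{\ellmahl^m})$, to which I will apply the induction hypothesis on $\xi_{\tuplespt}(z^{\ellmahl^m})$ (valid since $\tuplespt \in \alat{t-1}$), expanding each factor as a linear combination of $\xi_{\tuple''}$ of length $\leq t-1$. Multiplying by $z^{-a_1\ellmahl^m}$ and summing over $m$ yields the set $F$ and its coefficients in $\polyraminfzero$ (and in $\Qbar[z^{-1}]$ when $\a \in \Z_{>0}^t$, since then both $z^{-a_1\ellmahl^m}$ and the inductive coefficients lie in $\Qbar[z^{-1}]$). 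The sub-tuple condition on $\a'$ propagates at once: $\tuple^{(i)}$ has $\a'=\a$; each $F$-tuple has $\a'$ that is a sub-tuple of $\aspt$ by induction, hence of $\a$. Subtracting $(\lambda_1\cdots\lambda_t)^j\,\xi_{\tuple}$ then gives \eqref{eq:chi zp moins chi}.

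The case $j \leq -1$ is symmetric: with $n=-j \geq 1$, substituting $k'=k+n$ in $\malop{\ellmahl}^{-n}(\xi_{\tuple})$ turns the sum into one over $k' \geq n+1$, which I will write as $\sum_{k' \geq 1}-\sum_{k'=1}^{n}$, expand $(k'-n)^{\alpha_1}$ by the binomial theorem, and process analogously to the $j \geq 1$ case. The finite part now involves $z^{-a_1/\ellmahl^{k'}}\,\xi_{\tuplespt}(z^{\ellmahl^{-k'}})$, which lies in $\polyraminfzero$ but not in general in $\Qbar[z^{-1}]$, matching the fact that the integer-coefficient statement of the lemma is claimed only for $j \geq 1$. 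The base case $t=1$ is a direct computation: since $\xi_{((),(),())}=1$, the finite part contributes to $F$ only the single tuple $((),(),()) \in \alat{0}$, with an explicit Puiseux-polynomial coefficient.

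The main obstacle is verifying non-constancy, namely that each $p_{\tuple'}(z)$ lies in $\polyramstrictinfzero$. After collecting contributions from different $m$ (resp.~$k'$) for a fixed $\tuple'$, the coefficient is a polynomial in $z^{-1/\starpourexp}$ whose leading (least negative) monomial, arising from the smallest level $m_0$ (resp.~$k'_0$) at which $\tuple'$ appears, has exponent $-a_1\ellmahl^{m_0}$ (resp.~$-a_1/\ellmahl^{k'_0}$), strictly negative since $a_1>0$ and $\ellmahl \geq 2$. Distinct levels produce pairwise distinct such leading exponents, and the inductively-produced $\polyraminfzero$-coefficients contribute only non-positive additional shifts, so this leading monomial cannot be cancelled by contributions from other levels. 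Hence $p_{\tuple'}$ is genuinely non-constant, completing the induction.
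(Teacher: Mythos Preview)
Your proof is correct in its overall architecture but organizes the argument differently from the paper. You run a single induction on $t$ and treat all $j$ at once by shifting the summation index in \eqref{eq:def_xi_summable}; the paper instead first establishes the cases $j=1$ and $j=-1$ explicitly (the latter with an induction on $t$), and then reaches arbitrary $j$ by a straightforward induction on $|j|$. Your route is more uniform, while the paper's gives particularly clean explicit formulas in the base cases (for $j=1$ one simply has $F=\{\tuplespt\}$ with coefficient $\lambda z^{-a_1}$), which is convenient when the lemma is invoked later with $j=1$.

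One point deserves tightening. Your non-constancy argument for $p_{\tuple'}(z)$ asserts that the leading monomial from the smallest level $m_0$ has exponent exactly $-a_1\ellmahl^{m_0}$ and cannot be cancelled by higher levels. This is not quite right: when $\tuple'$ arises only via the inductive $F$-part at level $m_0$, the inductive coefficient $p_{\tuple',m_0}$ may have zero constant term, so the actual leading exponent is strictly below $-a_1\ellmahl^{m_0}$, and collisions with monomials from higher levels are then possible in principle. The conclusion you want, however, follows from a much simpler observation that bypasses all of this: every monomial contributing to $p_{\tuple'}$ carries the factor $z^{-a_1\ellmahl^{m}}$ (or $z^{-a_1/\ellmahl^{k'}}$) with $a_1>0$, multiplied by an element of $\polyraminfzero$, so \emph{every} monomial of $p_{\tuple'}$ has strictly negative exponent. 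Hence the constant term of $p_{\tuple'}$ is zero, and if $p_{\tuple'}\neq 0$ it automatically lies in $\polyramstrictinfzero$. Replacing your last paragraph with this one-line remark would make the argument both shorter and airtight.
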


\begin{proof}
In this proof, we set $\lambda=\lambda_1\cdots\lambda_t$. 

Let us first prove the result for $j=1$. Applying $\malop{\ellmahl}$ to formula \eqref{eq:def_xi_summable} from Section~\ref{sec: xi can be def induct}, and adopting the notation $\tuplespt$ introduced therein, we obtain 
\begin{align*}\label{eq:Mahler_xi}
\nonumber\xi_{\tuple}(z^\ellmahl) &= \sum_{k\geq 1} k^{\alpha_1}\lambda^{k}\malop{\ellmahl}^{1-k}(z^{-a_1}\xi_{\tuplespt}(z))\\
 &=  \sum_{l\geq 0} (l+1)^{\alpha_1}\lambda^{l+1}\malop{\ellmahl}^{-l}(z^{-a_1}\xi_{\tuplespt}(z))
\\ &=  \lambda z^{-a_1}\xi_{\tuplespt}(z) + \lambda \sum_{l\geq 1} \sum_{j=0}^{\alpha_1}\binom{\alpha_1}{j} l^j\lambda^{l}\malop{\ellmahl}^{-l}(z^{-a_1}\xi_{\tuplespt}(z))
\\ \nonumber &= \lambda z^{-a_1}\xi_{\tuplespt}(z) + \lambda \sum_{j=0}^{\alpha_1}\binom{\alpha_1}{j}\xi_{((j,\alpha_2,\ldots,\alpha_t),\lambd,\a)}(z),
\end{align*}
where 
the last equality follows from \eqref{eq:def_xi_summable} again. This entails that 
$$
\xi_{\tuple}(z^\ellmahl)-\lambda \xi_{\tuple}(z)
=
\lambda z^{-a_1}\xi_{\tuplespt}(z) + \lambda \sum_{j=0}^{\alpha_1-1}\binom{\alpha_1}{j}\xi_{((j,\alpha_2,\ldots,\alpha_t),\lambd,\a)}(z),
$$
which is an equality of the desired form. 

The case of an arbitrary $j \in \Z_{\geq 1}$ follows from an easy induction using the particular case $j=1$.

We now consider the case $j=-1$. 
Applying $\malop{\ellmahl}^{-1}$ to formula \eqref{eq:def_xi_summable} from Section~\ref{sec: xi can be def induct}, and adopting the notation $\tuplespt$ introduced therein, we obtain 
\begin{align*}
\xi_{\tuple}(z^{\ellmahl^{-1}}) &= \sum_{k\geq 1} k^{\alpha_1}\lambda^{k}\malop{\ellmahl}^{-1-k}(z^{-a_1}\xi_{\tuplespt}(z))\\
 &=  \sum_{l\geq 2} (l-1)^{\alpha_1}\lambda^{l-1}\malop{\ellmahl}^{-l}(z^{-a_1}\xi_{\tuplespt}(z))\\
 &=   \lambda^{-1} \sum_{l\geq 2} \sum_{j=0}^{\alpha_1}c_{j} l^j\lambda^{l}\malop{\ellmahl}^{-l}(z^{-a_1}\xi_{\tuplespt}(z))  \text{ \ \ \ \ \ \  with } c_{j}=(-1)^{\alpha_{1}-j}\binom{\alpha_1}{j}\\
&= -\lambda^{-1}  \sum_{j=0}^{\alpha_1} c_{j} \lambda \malop{\ellmahl}^{-1}(z^{-a_1}\xi_{\tuplespt}(z))+\lambda^{-1} \sum_{l\geq 1} \sum_{j=0}^{\alpha_1}c_{j} l^j\lambda^{l}\malop{\ellmahl}^{-l}(z^{-a_1}\xi_{\tuplespt}(z)) \\
&= -0^{\alpha_{1}} z^{-\frac{a_1}{\ellmahl}}\malop{\ellmahl}^{-1}(\xi_{\tuplespt}(z)) +\lambda^{-1}  \sum_{j=0}^{\alpha_1}c_{j} \xi_{((j,\alpha_2,\ldots,\alpha_t),\lambd,\a)}(z)
\end{align*}
where the last equality follows from \eqref{eq:def_xi_summable} again. This entails that 
\begin{multline*}
 \xi_{\tuple}(z^{\ellmahl^{-1}}) - \lambda^{-1} \xi_{\tuple}(z) 
= 
 -0^{\alpha_{1}} z^{-\frac{a_1}{\ellmahl}}\left(\xi_{\tuplespt}(z^{\ellmahl^{-1}})-\widecheck{\lambda}^{-1} \xi_{\tuplespt}(z)\right) 
  \\ -0^{\alpha_{1}} z^{-\frac{a_1}{\ellmahl}}\widecheck{\lambda}^{-1} \xi_{\tuplespt}(z) +\lambda^{-1}  \sum_{j=0}^{\alpha_1-1}c_{j} \xi_{((j,\alpha_2,\ldots,\alpha_t),\lambd,\a)}(z)
\end{multline*}
where $\widecheck{\lambda}=\lambda_{2} \cdots \lambda_{t}$ if $t \geq 2$ or $1$ if $t=1$. 
Now, since $\tuplespt \in \alat{t-1}$, the case $j=-1$ of the lemma follows from the latter formula using an obvious induction on $t$.

The case of an arbitrary $j \in \Z_{\leq -1}$ follows from an easy induction using the particular case $j=-1$.
\end{proof}

\subsubsection{The filtered $\polyraminfzero$-algebra $(\V,(\V_{s})_{ s \in \Z_{\geq 0}})$}\label{sec:space_V}
We will make repeated use of the $\Qbar$-vector spaces defined, for any $s\in \Z_{\geq 0}$, by 
\begin{equation*}
\V_{s}= \operatorname{Span}_{\Qbar}\left(\left\{z^{-\gamma}\xi_{\tuple} \ \vert \ \gamma \in \QQ_{\geq 0},\,
\tuple \in   \bigcup_{t=0}^{s}\alat{t}
\right\}\setminus\{1\}\right)\!
\end{equation*}
and by 
\begin{equation*}
\mathcal V 
= \bigcup_{s \in \Z_{\geq 0}} \V_s  = \operatorname{Span}_{\Qbar}\left(\left\{z^{-\gamma}\xi_{\tuple} \ \vert \ \gamma \in \QQ_{\geq 0},\,
	\tuple \in   \ala
	\right\}\setminus\{1\}\right).
\end{equation*}

The next result summarizes the main properties of the $\V_{s}$ and of $\V$ that will be useful to us. 

\begin{prop}\label{lem:VsVs'}
The pair $(\V,(\V_{s})_{ s \in \Z_{\geq 0}})$ is a filtered (non-unital) sub-$\polyraminfzero$-algebra of $\Hahn$, {\it i.e.}, 
 \begin{itemize}
 \item \label{V sub alg} $\V$ is a (non-unital) sub-$\polyraminfzero$-algebra of $\Hahn$;
 \item \label{Vs non dec etc} $(\V_{s})_{ s \in \Z_{\geq 0}}$ is a nondecreasing sequence of sub-$\polyraminfzero$-modules of $\Hahn$ such that 
\begin{itemize}
 \item \label{Vs non dec etc union} $\mathcal V = \bigcup_{s \in \Z_{\geq 0}}\mathcal V_s$;
 \item \label{Vs non dec etc prod} for all $s,s' \in \Z_{\geq 0}$, $\V_{s} \cdot \V_{s'} \subset \V_{s+s'}$.
\end{itemize}
\end{itemize} 
Moreover, we have $\malop{\ellmahl}(\V)=\V$ and, for all $s \in \Z_{\geq 0}$, $\malop{\ellmahl}(\V_{s})=\V_{s}$.
 \end{prop}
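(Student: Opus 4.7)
The plan is to verify the easy assertions first and to treat the multiplicative closure $\V_s \cdot \V_{s'} \subset \V_{s+s'}$ as the main computational step. The inclusions $\V_s \subset \V_{s+1}$ and $\V = \bigcup_s \V_s$ are immediate from the definition. That each $\V_s$ is a $\polyraminfzero$-submodule of $\Hahn$ is a one-line check: multiplying a spanning element $z^{-\gamma}\xi_\tuple$ by a monomial $z^{-\gamma_0}$ with $\gamma_0 \geq 0$ gives $z^{-(\gamma+\gamma_0)}\xi_\tuple$, still a spanning element (the excluded value $1$ cannot arise, as this would force $\tuple = ((),(),())$ and $\gamma = \gamma_0 = 0$, contradicting $z^{-\gamma}\xi_\tuple \neq 1$).

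For the $\malop{\ellmahl}$-invariance, I apply Lemma~\ref{lem:phichi-lamddchi} with $j = \pm 1$ to $\xi_\tuple$ for $\tuple \in \alat{t}$, $t \leq s$: every term on the right-hand side is a $\Qbar$- or $\polyraminfzero$-multiple of some $\xi_{\tuple'}$ with $\tuple'$ of depth at most $t$, and the contributions in which $\xi_{\tuple'} = 1$ aggregate into an element of $\polyraminfzero$ whose constant term must vanish (since both sides have support in $\QQ_{<0}$), placing the entire right-hand side in $\V_t$. Together with the leading $(\lambda_1\cdots\lambda_t)^{\pm 1}\xi_\tuple$, this yields $\malop{\ellmahl}^{\pm 1}(\xi_\tuple) \in \V_t$; multiplying by $z^{-\ellmahl^{\pm 1}\gamma}$ on the outside gives $\malop{\ellmahl}^{\pm 1}(z^{-\gamma}\xi_\tuple) \in \V_s$. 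Since $\malop{\ellmahl}$ is a field automorphism of $\Hahn$, the two inclusions $\malop{\ellmahl}^{\pm 1}(\V_s) \subset \V_s$ force equality, and passing to unions yields $\malop{\ellmahl}(\V) = \V$.

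The main step is $\V_s \cdot \V_{s'} \subset \V_{s+s'}$. By bilinearity and the module property, it reduces to showing $\xi_\tuple \xi_{\tuple'} \in \V_{t+t'}$ for $\tuple \in \alat{t}$, $\tuple' \in \alat{t'}$ with $t, t' \geq 1$. I argue by induction on $t + t'$: applying \eqref{eq:def_xi_summable} to both factors (with $\mu = \lambda_1 \cdots \lambda_t$ and $\mu' = \lambda'_1 \cdots \lambda'_{t'}$) and partitioning the double sum by the sign of $k - k'$, the diagonal contribution equals
\[
\sum_{k \geq 1} k^{\alpha_1+\alpha'_1}(\mu\mu')^k \, \malop{\ellmahl}^{-k}\bigl(z^{-(a_1+a'_1)}\xi_{\tuplespt}\xi_{\tuplespt'}\bigr),
\]
in which $\xi_{\tuplespt}\xi_{\tuplespt'} \in \V_{t+t'-2}$ by induction (with the appropriate reading when $t = 1$ or $t' = 1$). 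For the off-diagonal regions I set $k' = k+l$ (resp.~$k = k'+l$) with $l \geq 1$, expand $(k+l)^{\alpha'_1} = \sum_j \binom{\alpha'_1}{j}k^j l^{\alpha'_1-j}$, and use the direct-expansion identity below to rewrite the inner sum over $l$ as a finite $\Qbar$-linear combination of $\xi$'s of depth $t'$, reducing the whole region to finitely many outer sums $\sum_k k^\beta \nu^k \malop{\ellmahl}^{-k}(z^{-a_1}\xi_{\tuplespt} \cdot \xi_{(\text{depth } t')})$; the argument of $\malop{\ellmahl}^{-k}$ is in $\V_{t+t'-1}$ by induction. Summability at each step is provided by Lemma~\ref{lem: phik summable}.

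The main obstacle is the direct-expansion identity on which all three cases rest: for $\beta \in \Z_{\geq 0}$, $\nu \in \Qbar^\times$, $\gamma'' \in \QQ_{>0}$ and $\tuple'' = (\balpha'', \lambd'', \a'') \in \alat{u}$,
\[
\sum_{k \geq 1} k^\beta \nu^k \malop{\ellmahl}^{-k}(z^{-\gamma''}\xi_{\tuple''}) = \xi_{((\beta, \alpha''_1,\ldots, \alpha''_u),\, (\nu/(\lambda''_1 \cdots \lambda''_u), \lambda''_1, \ldots, \lambda''_u),\, (\gamma'', a''_1,\ldots, a''_u))},
\]
a single $\xi$ of depth $u+1$. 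This is a direct computation: expanding both sides and equating the exponents of $\nu$ and of each $\lambda''_i$ forces the new first $\lambd$-coordinate to be $\nu/(\lambda''_1\cdots\lambda''_u)$, with the remaining entries inherited from $\tuple''$. Once this identity is in hand, applying it term-by-term to the finite expansions produced above places each contribution of the three-region split into $\V_{t+t'-1}$ or $\V_{t+t'}$, completing the induction. The subalgebra property of $\V$ then follows by taking unions.
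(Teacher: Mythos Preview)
Your argument is correct but takes a genuinely different route from the paper's. For the product closure $\V_s \cdot \V_{s'} \subset \V_{s+s'}$, the paper first switches to the alternative generators $\widetilde{\xi}_\tuple$ of Lemma~\ref{lem:Vs in terms xi tilde} (indexed by \emph{strictly increasing} tuples $1 \le k_1 < \cdots < k_t$) and then decomposes the product $\widetilde{\xi}_\tuple\widetilde{\xi}_{\tuple'}$ combinatorially: the double index set of pairs of increasing tuples is partitioned into finitely many pieces $\mathcal{S}_{E,\sigma}$ according to how the two tuples merge (which indices coincide, encoded by $E$, and how the rest interleave, encoded by $\sigma$), and each piece is directly a single $\widetilde{\xi}_{\tuple''}$ of depth $s+s'-|E|$. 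This is a non-inductive, shuffle-type argument. You instead stay with the $\xi_\tuple$ and induct on $t+t'$ via the recursion~\eqref{eq:def_xi_summable}; your three-region split plays the role of the merge decomposition, and your ``direct-expansion identity'' is nothing other than~\eqref{eq:def_xi_summable} read in the right variables. Your route avoids introducing the $\widetilde{\xi}$ basis, but the price is the induction bookkeeping and a summability justification for the double-indexed family $(A_kB_{k'})_{k,k'}$ which, strictly speaking, requires a short argument beyond a direct citation of Lemma~\ref{lem: phik summable} (standard, though). For the $\malop{\ellmahl}$-invariance the paper just observes $\malop{\ellmahl}(\xi_{(\balpha,\lambd,\a)}) = \xi_{(\balpha,\lambd,\ellmahl\a)}$ from the definition, so each generator maps to a generator of the same depth; your detour via Lemma~\ref{lem:phichi-lamddchi} is correct but considerably heavier than necessary.
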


To prove this result, it will be convenient to use the following alternate description of the $\V_{s}$. 

\begin{lem}\label{lem:Vs in terms xi tilde}
For any $s\in \Z_{\geq 0}$, we have 
\begin{equation*}
\V_{s}= \operatorname{Span}_{\Qbar}\left(\left\{z^{-\gamma}\widetilde{\xi}_{\tuple} \ \vert \ \gamma \in \QQ_{\geq 0},\,
\tuple \in   \bigcup_{t=0}^{s}\alat{t}
\right\} \setminus \{1\}\right)
\end{equation*} 
where, for all $t \in \{1,\ldots,s\}$ and for all $\tuple=(\balpha,\lambd,\a) \in \ala_t$,
	$$
	\widetilde{\xi}_{\tuple}(z)=\sum_{1 \leq k_1< \cdots < k_t} k_{1}^{\alpha_{1}} \cdots k_{t}^{\alpha_{t}} \lambda_1^{k_1}\lambda_2^{k_2}\cdots\lambda_t^{k_t} z^{-\frac{a_1}{p^{k_1}} - \frac{a_2}{p^{k_2}}  - \cdots - \frac{a_t}{p^{ k_t}}} \in \Hahn
	$$
and  $
\widetilde{\xi}_{((),(),())}(z)= 1.
$ 	
\end{lem}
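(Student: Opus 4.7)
The plan is to reduce the lemma to a purely combinatorial identity comparing $\xi_{\tuple}$ and $\widetilde{\xi}_{\tuple}$ for each fixed $t \geq 1$ and fixed $(\lambd,\a) \in (\Qbar^{\times})^{t} \times \QQ_{>0}^{t}$.

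First, I would rewrite the defining sum \eqref{eq:def_chi intro} for $\xi_{\tuple}$ via the change of variables $l_i = k_1 + k_2 + \cdots + k_i$. Since each $k_i \geq 1$, the condition $(k_1,\ldots,k_t) \in \Z_{\geq 1}^{t}$ corresponds bijectively to the condition $1 \leq l_1 < l_2 < \cdots < l_t$, and the product of $\lambda_i$'s transforms as
\[
\lambda_1^{k_1}\lambda_2^{k_1+k_2}\cdots\lambda_t^{k_1+\cdots+k_t} = \lambda_1^{l_1}\lambda_2^{l_2}\cdots\lambda_t^{l_t},
\]
while $k_i^{\alpha_i}$ becomes $(l_i - l_{i-1})^{\alpha_i}$ (with $l_0=0$). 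The $z$-exponents also become exactly those appearing in $\widetilde{\xi}_{\tuple}$. Therefore
\[
\xi_{\tuple} = \sum_{1 \leq l_1 < \cdots < l_t} l_1^{\alpha_1}(l_2-l_1)^{\alpha_2}\cdots(l_t-l_{t-1})^{\alpha_t}\, \lambda_1^{l_1}\cdots \lambda_t^{l_t}\, z^{-\frac{a_1}{p^{l_1}}-\cdots-\frac{a_t}{p^{l_t}}}.
\]

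Second, I would compare this with $\widetilde{\xi}_{\tuple}$, which has the same summands except that the weight $l_1^{\alpha_1}(l_2-l_1)^{\alpha_2}\cdots(l_t-l_{t-1})^{\alpha_t}$ is replaced by $l_1^{\alpha_1}l_2^{\alpha_2}\cdots l_t^{\alpha_t}$. Since $l_i = (l_1-l_0) + (l_2-l_1) + \cdots + (l_i - l_{i-1})$ and, conversely, $l_i - l_{i-1}$ is a polynomial in $l_{i-1}, l_i$, binomial expansion shows that the $\Qbar$-vector space of polynomials in $l_1,\ldots,l_t$ spanned by $\{l_1^{\alpha_1} l_2^{\alpha_2}\cdots l_t^{\alpha_t} : \boldsymbol{\alpha} \in \Z_{\geq 0}^{t}\}$ is equal to the one spanned by $\{l_1^{\alpha_1}(l_2-l_1)^{\alpha_2}\cdots (l_t-l_{t-1})^{\alpha_t} : \boldsymbol{\alpha} \in \Z_{\geq 0}^{t}\}$. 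Writing each monomial in one family as a finite $\Qbar$-linear combination of monomials in the other and summing term-by-term over the (common) index set $\{1\leq l_1 <\cdots <l_t\}$ yields the identities
\[
\operatorname{Span}_{\Qbar}\bigl\{\xi_{(\boldsymbol{\alpha},\lambd,\a)} : \boldsymbol{\alpha}\in \Z_{\geq 0}^{t}\bigr\} = \operatorname{Span}_{\Qbar}\bigl\{\widetilde{\xi}_{(\boldsymbol{\alpha},\lambd,\a)} : \boldsymbol{\alpha}\in \Z_{\geq 0}^{t}\bigr\}
\]
for every fixed $t \geq 1$ and fixed $(\lambd,\a)$.

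Finally, I would multiply by $z^{-\gamma}$ with $\gamma \in \QQ_{\geq 0}$ and take $\Qbar$-linear combinations over all allowed $(\lambd,\a)$ and over $t \in \{0,1,\ldots,s\}$. The case $t=0$ contributes only $\xi_{((),(),())}=\widetilde{\xi}_{((),(),())}=1$, so the excluded element $\{1\}$ is the same on both sides and the identity $\V_{s} = \operatorname{Span}_{\Qbar}(\{z^{-\gamma}\widetilde{\xi}_{\tuple}\}\setminus\{1\})$ follows. The essentially routine step is the combinatorial identity of spans in the second paragraph; I do not anticipate any genuine obstacle, since the two bases of polynomial weights are related by a unipotent linear change of basis.
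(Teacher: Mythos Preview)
Your proof is correct and follows essentially the same approach as the paper: the paper also performs the change of variables $l_i = k_1 + \cdots + k_i$ to rewrite $\xi_{\tuple}$ as a sum over $1 \leq l_1 < \cdots < l_t$, expands the weight $l_1^{\alpha_1}(l_2-l_1)^{\alpha_2}\cdots(l_t-l_{t-1})^{\alpha_t}$ to express $\xi_{\tuple}$ as a $\Qbar$-linear combination of $\widetilde{\xi}_{\tuple'}$ with $\tuple' \in \alat{t}$ having the same $(\lambd,\a)$, and then carries out the inverse change of variables to obtain the converse inclusion.
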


\begin{proof}
The result follows immediately from the following remarks. 
Using the formula
\begin{multline*}
 \xi_{\tuple}(z) 
=\\ 
\sum_{k_1,\ldots,k_t \geq 1} k_{1}^{\alpha_{1}} \cdots k_{t}^{\alpha_{t}} \lambda_1^{k_1}\lambda_2^{k_1+k_2}\cdots\lambda_t^{k_1+\cdots+k_t} z^{-\frac{a_1}{p^{k_1}} - \frac{a_2}{p^{k_1+k_2}}  - \cdots - \frac{a_t}{p^{k_1+k_2+\cdots + k_t}}} 
\\
=\sum_{1\leq l_1 < \cdots < l_t} l_{1}^{\alpha_{1}} (l_{2}-l_{1})^{\alpha_{2}} \cdots (l_{t}-l_{t-1})^{\alpha_{t}} \lambda_1^{l_1}\lambda_2^{l_{2}}\cdots\lambda_t^{l_{t}} z^{-\frac{a_1}{p^{l_1}} - \frac{a_2}{p^{l_2}}  - \cdots - \frac{a_t}{p^{l_t}}} 
\end{multline*}
and expanding $l_{1}^{\alpha_{1}} (l_{2}-l_{1})^{\alpha_{2}} \cdots (l_{t}-l_{t-1})^{\alpha_{t}} $, we see that $\xi_{\tuple}$ is a $\Qbar$-linear combination of certain $\widetilde{\xi}_{\tuple'}$ with $\tuple' \in \ala_t$. 
Conversely, using the formula 
\begin{multline*}
\widetilde{\xi}_{\tuple}(z) 
=
\sum_{1 \leq k_1< \cdots < k_t} k_{1}^{\alpha_{1}} \cdots k_{t}^{\alpha_{t}} \lambda_1^{k_1}\lambda_2^{k_2}\cdots\lambda_t^{k_t} z^{-\frac{a_1}{p^{k_1}} - \frac{a_2}{p^{k_2}}  - \cdots - \frac{a_t}{p^{ k_t}}}
\\
=\sum_{l_1,\ldots,l_t \geq 1} l_{1}^{\alpha_{1}} (l_{1}+l_{2})^{\alpha_{2}} \cdots (l_{1}+\cdots+l_{t})^{\alpha_{t}} \lambda_1^{l_1}\lambda_2^{l_{1}+l_2}\cdots\lambda_t^{l_{1}+\cdots+l_t}\\
\times z^{-\frac{a_1}{p^{l_1}} - \frac{a_2}{p^{l_{1}+l_2}}  - \cdots - \frac{a_t}{p^{l_{1}+\cdots+ l_t}}}
\end{multline*}
and expanding $ l_{1}^{\alpha_{1}} (l_{1}+l_{2})^{\alpha_{2}} \cdots (l_{1}+\cdots+l_{t})^{\alpha_{t}}$, we see that $\widetilde{\xi}_{\tuple}$ is a $\Qbar$-linear combination of certain $\xi_{\tuple'}$ with $\tuple' \in \ala_t$. 
\end{proof}

\begin{proof}[Proof of Proposition~\ref{lem:VsVs'}]
The fact that $(\V_{s})_{ s \in \Z_{\geq 0}}$ is a nondecreasing sequence of sub-$\polyraminfzero$-modules of $\Hahn$ satisfying $\mathcal V = \bigcup_{s \in \Z_{\geq 0}}\mathcal V_s$ 
is clear. 
Moreover, using Lemma~\ref{lem:Vs in terms xi tilde}, we see that to prove the inclusion $\V_{s} \cdot \V_{s'} \subset \V_{s+s'}$, 
it is sufficient to prove that, for any $s,s' \in \Z_{\geq 1}$, for any 
$\tuple \in \alat{s}$ and any 
$\tuple' \in \alat{s'}$,  
we have  $\widetilde{\xi}_{\tuple}(z) \widetilde{\xi}_{\tuple'}(z)\in \V_{s+s'}$. 
To show this, we first note that the product $\widetilde{\xi}_{\tuple}(z)\widetilde{\xi}_{\tuple'}(z)$ is the sum indexed by the set
$$
\mathcal S=\{(k_1,\ldots,k_s,k_1',\ldots,k_{s'}') \in \Z_{\geq 1}^{s+s'} \ \vert \ k_1<\cdots < k_s \text{ and } k_1' < \cdots < k_{s'}'\}
$$
of the following terms
\begin{multline}\label{eq:terms_products}
k_1^{\alpha_1}\cdots k_s^{\alpha_s}(k_1')^{\alpha'_1}\cdots (k_{s'})^{\alpha'_{s'}}\lambda_1^{k_1}\cdots\lambda_s^{k_s}(\lambda'_1)^{k_1'}\cdots(\lambda'_{s'})^{k_{s'}'} \\
\times z^{-\frac{a_1}{\ellmahl^{k_1}}-\cdots -\frac{a_s}{\ellmahl^{k_s}}-\frac{a'_1}{\ellmahl^{k_1'}}-\cdots -\frac{a'_{s'}}{\ellmahl^{k'_{s'}}}}, 
\end{multline}
where we have used Notation \ref{notation:omega, alpha, etc}.
The set $\mathcal S$ can be written as the disjoint union of finitely many sets of the form
\begin{multline*}
 \mathcal S_{E,\sigma}= \{(n_1,\ldots,n_{s+s'}) \in \mathcal S \ \vert \  n_{\sigma(1)}\leq \cdots \leq n_{\sigma(s+s')} \\ 
 \text{and, for any } i<j, \ (n_{i}=n_{j} \Leftrightarrow (i,j) \in E)\}
\end{multline*}
where $E$ is a subset of $\{1,\ldots,s\}\times\{s+1,\ldots,s+s'\}$ and $\sigma$ is a permutation of $\{1,\ldots,s+s'\}$. 
Moreover, it is easily seen that, for $\mathcal S_{E,\sigma}$ to be nonempty, it is necessary and sufficient that the following conditions be satisfied: 
\begin{enumerate}
	\item if $(i_{1},j_{1})$ and $(i_{2},j_{2})$ are distinct elements of $E$, then either ($i_{1}<i_{2}$ and $j_{1}<j_{2}$) or ($i_{2}<i_{1}$ and $j_{2}<j_{1}$); 
	\item $
	\sigma^{-1}(1)<\cdots < \sigma^{-1}(s)$ and $\sigma^{-1}(s+1)<\cdots <\sigma^{-1}(s+s');  
	$
	\item for all $(i,j) \in E$, $\vert \sigma^{-1}(j)-\sigma^{-1}(i)\vert =1$. 
\end{enumerate}
Now, it is clear that, for all $E$ and $\sigma$ satisfying these conditions, the sum of \eqref{eq:terms_products} over all $(k_1,\ldots,k_s,k_1',\ldots,k_{s'}') \in \mathcal S_{E,\sigma}$ equals $\widetilde{\xi}_{\tuple''}$ for some $\tuple'' \in \ala_{s+s'-{\rm Card}(E)}$. This implies that $\widetilde{\xi}_{\tuple}(z) \widetilde{\xi}_{\tuple'}(z)\in \V_{s+s'}$, as wanted.

What we have proven so far clearly implies that $\V= \bigcup_{s \in \Z_{\geq 0}}\mathcal V_s$ is a sub-$\polyraminfzero$-module of $\Hahn$ closed by product, so it is a sub-$\polyraminfzero$-algebra of $\Hahn$. 
This concludes the proof of the fact that $(\V,(\V_{s})_{ s \in \Z_{\geq 0}})$ is a filtered sub-$\polyraminfzero$-algebra of $\Hahn$.

The last assertion of the proposition follows from the equality 
$$
\malop{\ellmahl}(\xi_{(\balpha,\lambd,\a)}(z)) = 
\xi_{(\balpha,\lambd,\ellmahl \a)}(z). 
$$
\end{proof} 

\section{Equations, systems and modules}\label{trad syst equ mod}

In this section, we introduce the notions of $\ellmahl$-Mahler equations, systems and modules, and we explore the relationships between them. Note that, although our focus is on the Mahler case, the following discussion can be extended to arbitrary difference fields, such as the $q$-difference or shift cases. See for instance \cite{VdPS97}.

Recall that $\malop{\ellmahl}$ denotes the field automorphism of $\Kinf=\Q(z^{\frac{1}{\starpourexp}})$ that maps $f(z)$ to $f(z^p)$. The pair $(\Kinf,\malop{\ellmahl})$ forms a difference field, that is,  a field equipped with an automorphism. 
Throughout this section, we let $(K,\psi)$ be a difference field extension of $(\Kinf,\malop{\ellmahl})$, {\it i.e.}, $K$ is a field extension of $\Kinf$ and $\psi$ is a field automorphism of $K$ extending $\malop{\ellmahl}$.  Here are some examples: $K=\Kinf$, $K=\puis$ the field of Puiseux series or $K=\Hahn$ the field of Hahn series, 
each endowed with the automorphism that maps $\sum _{{\gamma\in \QQ }}f_{\gamma}z^{\gamma}$ to $\sum _{{\gamma\in \QQ }}f_{\gamma}z^{\ellmahl\gamma}$.  In what follows, we will slightly abuse notation by continuing to denote the automorphism $\psi$ as $\malop{\ellmahl}$.

\subsection{Equations}

A $\ellmahl$-Mahler equation over $K$, is an equation of the form
\begin{equation}\label{eq: mahler sec syst mod} 
a_0 f + a_1 \malop{\ellmahl}(f)+ \cdots + a_d \malop{\ellmahl}^{d}(f) = 0 
\end{equation}
with $a_{0},\ldots,a_d \in K$ such that $a_0a_d \ne 0$. 

\subsection{Systems} By $\ellmahl$-Mahler system over $K$, we mean a system of the form 
\begin{equation*}
\malop{\ellmahl}(F)=AF 
\end{equation*}  
with $A \in \GL_{d}(K)$. 

For any $A,R \in \GL_d(K)$, we set
	\begin{equation}\label{eq:chgt_jauge}
	R[A]:=\malop{\ellmahl}(R)AR^{-1}.
	\end{equation}
We say that two systems $\malop{\ellmahl}(F)=AF $ and $\malop{\ellmahl}(F)=BF $ with $A,B \in \GL_{d}(K)$ are $K$-equivalent if there exists $R \in \GL_{d}(K)$ such that 
$$
B=R[A].
$$ 
Such an $R$ is usually called a gauge transformation.

\subsection{From equations to systems}\label{sec:eq to syst} 
We recall that any $\ellmahl$-Mahler equation  can be converted into a $\ellmahl$-Mahler system as follows: the equation 
\eqref{eq: mahler sec syst mod}  is equivalent to the system 
\begin{equation*}\label{matrix system equa app}
\malop{\ellmahl}(F)=AF 
\end{equation*}  
where 
\begin{equation}\label{eq:syst assoc eq}
F=
\begin{pmatrix}
f \\
\malop{\ellmahl}(f)\\
\vdots \\
\malop{\ellmahl}^{d-1}(f)
\end{pmatrix}
\text{ and }  
A=\begin{pmatrix}
0&1&0&\cdots&0\\
0&0&1&\ddots&\vdots\\
\vdots&\vdots&\ddots&\ddots&0\\
0&0&\cdots&0&1\\
-\frac{a_{0}}{a_{d}}& -\frac{a_{1}}{a_{d}}&\cdots & \cdots & -\frac{a_{d-1}}{a_{d}}
\end{pmatrix}.
\end{equation}

\subsection{Modules}
We denote by 
$$
\mathcal{D}_{K}=K\langle \malopop,\malopop^{-1} \rangle
$$ 
the Ore algebra of noncommutative Laurent polynomials in the indeterminate $\malopop$ and with coefficients in $K$ such that, for all $f\in K$,
$$
\malopop f = \malop{\ellmahl}(f) \malopop.
$$ 
 A left $\mathcal{D}_{K}$-module which is finite dimensional as a $K$-vector space will be called a $\ellmahl$-Mahler module (over $K$). By definition, the rank of a $\ellmahl$-Mahler module is its dimension as a $K$-vector space.

Given two $\ellmahl$-Mahler modules $M$ and $N$, the notation $M \cong N$ means that $M$ and $N$ are isomorphic as left $\mathcal{D}_{K}$-modules. 

\subsection{From systems to modules and {\it vice versa}} \label{sec: from syst to mod and vv} 
It is sometimes useful to work with $\ellmahl$-Mahler modules instead of $\ellmahl$-Mahler systems and {\it vice versa}.
Let us briefly recall how to move from one point of view to the other.

One can associate to any $\ellmahl$-Mahler system 
\begin{equation*}\label{syst pour syst eq mod}
\malop{\ellmahl} (Y)= AY
\end{equation*}
with $A \in \GL_{d}(K)$ a $\ellmahl$-Mahler module $M_{A}$ whose underlying abelian group is $K^d$ by letting $\mathcal D_K$ act on any $\v \in K^d$ as follows:
$$
\malopop(\v)=A^{-1}  \malop{\ellmahl}(\v),
$$ 
where $\malop{\ellmahl}$ acts component-wise on $\v$ which is seen as a column vector.

Conversely, we can attach a $\ellmahl$-Mahler system  to any $\ellmahl$-Mahler module $M$ {\it via} the choice of a $K$-basis $\mathcal{B}=(e_{1},\ldots,e_{d})$ of $M$, namely $\malop{\ellmahl} (Y)= AY$ where $A^{-1} \in \GL_{d}(K)$ represents the action of $\malopop$ on $\mathcal{B}$ ({\it i.e.}, the $j$th column of $A^{-1}$ represents $\malopop(e_{j})$ in the basis $\mathcal{B}$). We have $M\cong M_{A}$. 

It is easily seen that two $\ellmahl$-Mahler systems $\malop{\ellmahl} (Y)= AY$ and  $\malop{\ellmahl} (Y)= BY$ with $A,B \in \GL_{d}(K)$ are $K$-equivalent
if and only if the corresponding $\ellmahl$-Mahler modules $M_{A}$ and $M_{B}$ are isomorphic.

\subsection{From modules to equations} 

Last, we recall the following classical result, known as the cyclic vector lemma, ensuring that any $\ellmahl$-Mahler module ``comes from'' an equation (for a proof, see for instance \cite[Theorem~B.2]{SolDiffEqFinTerms}).  

\begin{prop}\label{cyclic vect lem}
	For any $\ellmahl$-Mahler module $M$, there exists a nonzero $L \in \mathcal{D}_{K}$ such that $M \cong \mathcal{D}_{K}/\mathcal{D}_{K}L$.
\end{prop}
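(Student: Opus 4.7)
The plan is to reduce the statement to two separate tasks: (i) producing a cyclic vector in $M$, and (ii) inverting it via the Ore structure on $\mathcal{D}_{K}$. Concretely, I would aim to establish a cyclic vector lemma for $\ellmahl$-Mahler modules, namely the existence of an element $v \in M$ such that the family $(v,\malopop v,\ldots,\malopop^{d-1}v)$ is a $K$-basis of $M$, where $d=\dim_{K}M$. Once this is available, I would define the $\mathcal{D}_{K}$-linear map $\varphi\colon \mathcal{D}_{K}\to M$ sending $P\mapsto Pv$; the cyclic vector property makes $\varphi$ surjective, its kernel is a left ideal, and everything reduces to the structure of this ideal.

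The second step exploits the fact that $\mathcal{D}_{K}$ is a (noncommutative) left principal ideal domain. This follows from left Euclidean division with respect to the $\malopop$-degree, which works because $K$ is a field and $\malop{\ellmahl}$ is an automorphism (hence the leading coefficient of any element can be cleared after multiplying by an appropriate left factor in $\mathcal{D}_{K}$). Therefore $\ker\varphi = \mathcal{D}_{K}L$ for some $L \in \mathcal{D}_{K}$, and $L\neq 0$ because $\dim_{K}M = d$ is finite, so the powers $1,\malopop,\ldots,\malopop^{d}$ cannot be $K$-linearly independent in $M$ via~$v$. The first isomorphism theorem then yields $M \cong \mathcal{D}_{K}/\mathcal{D}_{K}L$, as desired.

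The main obstacle is the cyclic vector step. The classical approach, adapted from Cope's argument and as developed for difference modules (see, e.g., van der Put--Singer), proceeds as follows. Fix an arbitrary $K$-basis $(e_{1},\ldots,e_{d})$ of $M$, and look for $v$ of the form $v = \sum_{i=1}^{d} P_{i}(u)\,e_{i}$, where $u \in K$ is an auxiliary parameter and the $P_{i}(u)$ are suitable polynomial expressions in $u$ and its iterates under $\malop{\ellmahl}$. Expressing the matrix of $(v,\malopop v,\ldots,\malopop^{d-1}v)$ in the basis $(e_{1},\ldots,e_{d})$, its determinant $\Delta(u)\in K$ is a rational combination of the $\malop{\ellmahl}^{j}(u)$ and of the entries of the structural matrix of $\malopop$. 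The key input is that $\malop{\ellmahl}$ is not the identity on $K$, since it already acts nontrivially on $\Kinf$ via $\malop{\ellmahl}(z)=z^{\ellmahl}\neq z$; this provides enough "freedom" in the choice of $u$ so that $\Delta(u)\neq 0$. The non-vanishing is proved by induction on the rank $d$: one shows that a relation $\Delta(u)=0$ valid for all choices would force a nontrivial $\mathcal{D}_{K}$-linear relation among the $e_{i}$, contradicting the assumption that they form a basis.

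Finally, one should verify that the construction behaves well under the finiteness assumption on $M$; in particular, the order of $L$ produced must equal $d$, which is automatic from the $K$-basis property of $(v,\malopop v,\ldots,\malopop^{d-1}v)$. This completes the reduction of Proposition~\ref{cyclic vect lem} to the cyclic vector lemma, which is where all the real work lies.
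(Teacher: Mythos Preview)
The paper does not give its own proof of this proposition: it simply cites \cite[Theorem~B.2]{SolDiffEqFinTerms} and moves on. Your outline is the standard two-step argument (existence of a cyclic vector, then principality of the annihilator), which is precisely the content of that reference; so in substance you are reproducing the cited proof rather than doing something different.

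One small point worth tightening: $\mathcal{D}_{K}=K\langle\malopop,\malopop^{-1}\rangle$ is a skew \emph{Laurent} polynomial ring, so ``left Euclidean division with respect to the $\malopop$-degree'' is not quite literally available. The clean way is to note that any nonzero element of a left ideal $I\subset\mathcal{D}_{K}$ can be multiplied on the left by a power of $\malopop$ to lie in the skew polynomial subring $K\langle\malopop\rangle$, which \emph{is} left Euclidean; principality of $I\cap K\langle\malopop\rangle$ there yields principality of $I$ in $\mathcal{D}_{K}$. This is routine and does not affect the validity of your argument.
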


Finally, note that if \( M \cong \mathcal{D}_{K}/\mathcal{D}_{K}L \) where 
\( L = \sum_{i=0}^{d} a_{i} \malopop^{i} \in \mathcal{D}_{K} \) with $d \geq 1$ and $a_{0}a_{d} \neq 0$, 
then \( M \cong M_{B} \) where \( B = (A^{T})^{-1} \) is the 
contragredient ({\it i.e.}, the inverse of the transpose) of the matrix \( A \) 
given in \eqref{eq:syst assoc eq}; see for instance \cite[p.~192]{S04} 
for explanations about this in the context of \( q \)-difference equations, 
the present case being entirely similar.

\section{Fundamental matrices of solutions of Mahler systems and proof of Theorem~\ref{thm:decomp_chi}}\label{sec: fund mat sol}

Consider a $\ellmahl$-Mahler system 
\begin{equation}\label{the mahler syst}
\malop{\ellmahl}(Y)=AY \text{ with }  A \in \GL_{d}(\puis).
\end{equation}
According to \cite[Theorem~2]{RoquesLSMS}, it is $\Hahn$-equivalent to a system with constant coefficients, say 
\begin{equation}\label{the mahler syst const}
\malop{\ellmahl}(Y)=CY \text{ for some } C \in \GL_{d}(\Qbar),
\end{equation} 
which means that there exists a gauge transformation $F \in \GL_{d}(\Hahn)$ with
\begin{equation}\label{eq: FAz}
A=F[C].
\end{equation} 
There are situations where the entries of $F$ belong to $\puis$, as the following result, that will be used later, illustrates.

\begin{prop}[{\cite[Proposition~34]{Ro15}}]\label{prop: gauge eq cst reg sing case}
	Assume that the entries of $A$ have non-negative $z$-adic valuations and that $A(0) \in \GL_{d}(\Qbar)$. Then, the $\ellmahl$-Mahler system \eqref{the mahler syst} is $\puis$-equivalent to the system with constant coefficients $\malop{\ellmahl}(Y)=A(0)Y$. \end{prop}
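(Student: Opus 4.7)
I would construct explicitly a gauge transformation $F\in\GL_d(\puis)$ with $F(0)=I_d$ satisfying $F[A(0)]=A$, that is,
\[
\malop{\ellmahl}(F)\,A(0)=A\,F,
\]
by a direct coefficient-by-coefficient recursion that uses only the invertibility of $A(0)$.

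\textbf{Reduction and recursion.} First, since the entries of $A$ lie in $\puis$ with non-negative $z$-adic valuation, I would choose an integer $k\geq 1$ such that $A\in\Mat_d(\Qbar\llbracket z^{1/k}\rrbracket)$ and set $u:=z^{1/k}$, so that $\malop{\ellmahl}$ acts on $\Qbar\llbracket u\rrbracket$ by $u\mapsto u^{\ellmahl}$. Writing $A(u)=\sum_{n\geq 0} A_n u^n$ with $A_0=A(0)$, I would look for $F(u)=\sum_{n\geq 0} F_n u^n\in \Mat_d(\Qbar\llbracket u\rrbracket)$ with $F_0=I_d$ satisfying $F(u^{\ellmahl})\,A_0=A(u)\,F(u)$. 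Comparing the coefficients of $u^N$ for $N\geq 1$ gives
\[
A_0\,F_N \;=\; \varepsilon_N\,F_{N/\ellmahl}\,A_0 \;-\;\sum_{i=1}^{N} A_i\,F_{N-i},
\]
where $\varepsilon_N=1$ if $\ellmahl\mid N$ and $\varepsilon_N=0$ otherwise. Because $A_0\in\GL_d(\Qbar)$ and $N/\ellmahl<N$, this relation determines $F_N$ uniquely in terms of $F_0,\dots,F_{N-1}$, and a straightforward strong induction on $N$ yields a well-defined $F\in\Mat_d(\Qbar\llbracket z^{1/k}\rrbracket)$.

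\textbf{Conclusion and main subtlety.} Since $F(0)=I_d$, the matrix $F$ is invertible in $\Mat_d(\Qbar\llbracket z^{1/k}\rrbracket)\subset\GL_d(\puis)$, and by construction $F[A(0)]=A$, which gives the announced $\puis$-equivalence. The proof has no real technical obstacle; the main conceptual point is that the unknown $F_N$ enters the recursion only through the left-multiplication $A_0\,F_N$, so invertibility of $A_0$ is the sole required hypothesis and \emph{no} ``resonance'' condition on its eigenvalues is needed. This structural feature is specific to the Mahler setting, in contrast with differential or $q$-difference analogues at a regular singular point, and is precisely what makes a Puiseux-series gauge sufficient here, sparing the use of Hahn series that is generally necessary in the reduction theorem recalled at the beginning of this section.
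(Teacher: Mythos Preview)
Your argument is correct. The paper does not supply its own proof of this proposition but merely cites \cite[Proposition~34]{Ro15}; your explicit coefficient recursion is precisely the standard argument used there, so there is nothing to contrast.
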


However, in general, the entries of 
$F$ do not lie in 
$\puis$, and the first goal of this section is to gain a deeper understanding of their nature. This is achieved in Section~\ref{sec:forme F}, where we state and prove Theorem~\ref{theo: fund syst sol princ} showing that the gauge transformation $F$ can be chosen in the form
\[
F = F_1 F_2,
\]
where \( F_1 \in \GL_d(\puis) \) and $F_2 \in \GL_d(\V)$ is of a very specific shape (see Section~\ref{sec:space_V} for the definition of $\V$). \black In Section~\ref{sec: fund mat sol rappels}, we construct a fundamental matrix of solutions $e_{C}$ to~\eqref{the mahler syst const} with entries in a suitable difference ring extension \( \R \) of \( \Hahn \), from which we deduce a fundamental matrix of solutions to~\eqref{the mahler syst}, namely   
\[
Y_0 = F e_C = F_1 F_2 e_C \in \GL_{d}(\R).
\]
This will enable us to establish Theorem~\ref{thm:decomp_chi} in Section~\ref{sec:proof_thm_decomp_chi}, and will play a central role in the proof of Theorem~\ref{thm:pureté1}.

\subsection{The gauge transformation $F$}\label{sec:forme F} 
Before stating our main result, we need to introduce certain subgroups of $\GL_{d}(\Hahn)$.
Namely, for any $\nbblocks \in \Z_{\geq 1}$ and $\r=(r_{1},\ldots,r_{\nbblocks}) \in \Z_{\geq 1}^{\nbblocks}$ such that 
$r_{1}+\cdots+r_{\nbblocks}=d$,  
we let $\mathfrak{V}_{\r}$ be the set of unipotent block upper triangular matrices of the form 
\begin{equation}\label{eq: matrix F def mathfrak V}
F=\left(
\begin{array}{cccc}
	I_{r_{1}} & & \hspace*{-.4cm}\overset{j\text{th column}}{\downarrow} &
	\\ & \ddots &\hspace*{-.4cm} F_{i,j} & \hspace*{-.6cm}\overset{{\scriptsize{i\text{th row}}}}{\leftarrow}
	\\ &\bigzero & \hspace*{-.4cm}\ddots &
	\\  &  && \hspace*{-.6cm} I_{r_{\nbblocks}}  \\ 
\end{array}\right) \in \GL_{d}(\Hahn)
\end{equation}
such that, for all $i,j \in \{1,\ldots,\nbblocks\}$ with $j>i$, the strictly upper block
$
F_{i,j}
$ 
belongs to 
$
\Mat_{r_{i},r_{j}}(\V_{j-i}), 
$ 
the set $\V_{j-i}$ being defined in Section~\ref{sec:space_V}. 
Here and in the rest of the paper, for any $m,n \in \Z_{\geq 1}$ and any set $E$, we let $\Mat_{m,n}(E)$ be the set of $m\times n$ matrices with entries in $E$ and we write $\Mat_{n}(E)=\Mat_{n,n}(E)$ for the set of $n\times n$ square matrices with entries in $E$. 

\begin{prop}
$\mathfrak{V}_{\r}$ is a subgroup of $\GL_{d}(\Hahn)$. 
\end{prop}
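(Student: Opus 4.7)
The plan is to verify the three subgroup axioms: containment of the identity (trivial, taking all $F_{i,j}=0$), closure under matrix multiplication, and closure under matrix inversion. The key ingredient throughout will be Proposition~\ref{lem:VsVs'}, which provides the two properties of the filtration $(\V_{s})_{s \in \Z_{\geq 0}}$ that drive the computations: the inclusion $\V_{s}\cdot\V_{s'}\subset\V_{s+s'}$, and the fact that each $\V_{s}$ is closed under addition.

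For closure under multiplication, I would take $F,G\in\mathfrak{V}_{\r}$ and compute the strictly upper blocks of $H=FG$. Since both $F$ and $G$ are block upper triangular with identity diagonal blocks, for $j>i$ one has
\[
H_{i,j}=F_{i,j}+G_{i,j}+\sum_{i<k<j}F_{i,k}G_{k,j}.
\]
The terms $F_{i,j}$ and $G_{i,j}$ lie in $\Mat_{r_{i},r_{j}}(\V_{j-i})$. For the middle sum, the entries of $F_{i,k}G_{k,j}$ are finite sums of products of elements of $\V_{k-i}$ with elements of $\V_{j-k}$, which belong to $\V_{(k-i)+(j-k)}=\V_{j-i}$ by Proposition~\ref{lem:VsVs'}. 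Since $\V_{j-i}$ is an additive group, $H_{i,j}\in\Mat_{r_{i},r_{j}}(\V_{j-i})$ as required; the diagonal blocks of $H$ are $I_{r_{i}}$ and the strictly lower blocks vanish by block upper triangularity, so $H\in\mathfrak{V}_{\r}$.

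For closure under inversion, I would write $F=I_{d}+N$ where $N$ is strictly block upper triangular with $N_{i,j}=F_{i,j}\in\Mat_{r_{i},r_{j}}(\V_{j-i})$ for $j>i$. Since $N$ is strictly block upper triangular with $\nbblocks$ diagonal blocks, $N^{\nbblocks}=0$, and therefore
\[
F^{-1}=\sum_{k=0}^{\nbblocks-1}(-N)^{k}.
\]
The $(i,j)$ block of $N^{k}$ is a finite sum of terms of the form $N_{i_{0},i_{1}}N_{i_{1},i_{2}}\cdots N_{i_{k-1},i_{k}}$ indexed by chains $i=i_{0}<i_{1}<\cdots<i_{k}=j$. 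An iteration of the inclusion $\V_{s}\cdot\V_{s'}\subset\V_{s+s'}$ shows that the entries of such a product lie in $\V_{(i_{1}-i_{0})+\cdots+(i_{k}-i_{k-1})}=\V_{j-i}$, and summing preserves this inclusion. Thus the strictly upper blocks of $F^{-1}$ lie in the prescribed $\Mat_{r_{i},r_{j}}(\V_{j-i})$; its diagonal blocks are clearly $I_{r_{i}}$ and its strictly lower blocks vanish, so $F^{-1}\in\mathfrak{V}_{\r}$.

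There is no real obstacle here beyond careful bookkeeping of block indices: everything reduces to the two structural properties of the filtered algebra $(\V,(\V_{s})_{s \in \Z_{\geq 0}})$ already established in Proposition~\ref{lem:VsVs'}, combined with the nilpotency of strictly block upper triangular matrices with $\nbblocks$ diagonal blocks.
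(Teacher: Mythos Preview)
Your proof is correct and follows essentially the same approach as the paper: identity is trivial, closure under product uses the filtered-algebra inclusion $\V_{s}\cdot\V_{s'}\subset\V_{s+s'}$ from Proposition~\ref{lem:VsVs'}, and closure under inversion uses the finite geometric series $F^{-1}=\sum_{k}(-1)^{k}(F-I_{d})^{k}$ together with the same filtration property. The paper's write-up is terser (it does not spell out the block formulas), but the argument is identical.
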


\begin{proof}
Of course, $\mathfrak{V}_{\r}$ contains $I_{d}$. 
The fact that $\mathfrak{V}_{\r}$ is invariant by product follows immediately from the fact that $(\V,(\V_{s})_{ s \in \Z_{\geq 0}})$ is a filtered $\polyraminfzero$-algebra in virtue of  Proposition~\ref{lem:VsVs'}. It remains to prove that $\mathfrak{V}_{\r}$ is invariant by inversion. Consider $F \in \mathfrak{V}_{\r}$. We have $F^{-1}=(I_{d}+(F-I_{d}))^{-1}=\sum_{k=0}^{d} (-1)^{k} (F-I_{d})^{k}$. Using Proposition~\ref{lem:VsVs'} again, it is easily seen that the latter sum belongs to $\mathfrak{V}_{\r}$.  
\end{proof}

\begin{theo}\label{theo: fund syst sol princ}
Consider the $\ellmahl$-Mahler system \eqref{the mahler syst}. There exist 
\begin{itemize}
\item $\nbblocks \in \Z_{\geq 1}$ and $\r=(r_{1},\ldots,r_{\nbblocks}) \in \Z_{\geq 1}^{\nbblocks}$ such that $r_{1}+\cdots+r_{\nbblocks}=d$;
\item  an invertible block upper triangular matrix of the form
\begin{equation*}
\Theta=  \left(
\begin{array}{ccc}
A_{1}    &       &  *  \\ 
\bord & \ddots       &     \\ 
0 & \bord    & A_{\nbblocks}     \\ 
\end{array}\right) \in \GL_{d}(\polyraminfzero)
\end{equation*}
with $A_{1} \in \GL_{r_{1}}(\Qbar), \ldots, A_{\nbblocks} \in \GL_{r_{\nbblocks}}(\Qbar)$, whose constant term is denoted by $C \in \GL_{d}(\Qbar)$; 
\item  $F_{1} \in \GL_{d}(\puis)$;
\item $F_{2} \in \mathfrak{V}_{\r}$;
\end{itemize}
 such that 
 \begin{equation}\label{eq:F1Theta=AF1}
 A = F_{1}[\Theta] =\malop{\ellmahl}(F_{1})\Theta F_{1}^{-1} \text{ and \ } \Theta = F_{2}[C] = \malop{\ellmahl}(F_{2})CF_{2}^{-1}.
\end{equation} 
In particular, the matrix $F=F_{1}F_{2} \in \GL_{d}(\Hahn)$ satisfies 
\begin{equation}\label{eq:F1F2eC}
A=F[C]=\malop{\ellmahl}(F)CF^{-1}.
\end{equation}  
\end{theo}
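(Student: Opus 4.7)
The plan is a two-step gauge reduction. First, a $\puis$-gauge transformation $F_1$ brings $A$ to the intermediate matrix $\Theta$, so that $A = F_1[\Theta]$; second, a gauge $F_2 \in \mathfrak{V}_{\r}$ brings $\Theta$ to its block upper triangular constant part $C$, so that $\Theta = F_2[C]$. The cocycle identity $(F_1 F_2)[C] = F_1[F_2[C]]$ then yields $A = (F_1 F_2)[C]$, which is~\eqref{eq:F1F2eC}.

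For the first step, I would appeal to a Turrittin-type decomposition for Mahler systems, adapting ideas from~\cite{Ro15,RoquesLSMS}. Separating the system over $\puis$ according to the Newton polygon produces, after a $\puis$-gauge, a block upper triangular form with diagonal blocks corresponding to the distinct slopes; each such diagonal block, being regular singular once its slope is factored out, is reduced to a constant matrix $A_i \in \GL_{r_i}(\Qbar)$ via Proposition~\ref{prop: gauge eq cst reg sing case}. The strictly upper blocks, initially in $\puis$, can then be further adjusted by additional $\puis$-gauges so that only their polar parts remain, i.e., so that they lie in $\polyraminfzero$. This yields $\Theta$ and $F_1$ of the required form.

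For the second step, write $F_2 = I + N$ with $N$ strictly block upper triangular, and $\Theta = C + T'$ where $T'$ is strictly block upper triangular with entries in $\V_0$ (the diagonal blocks of $T'$ vanish because $C$ carries the diagonal $A_i$, and the constant terms of the off-diagonal blocks of $\Theta$ are absorbed into $C$). The equation $\Theta = F_2[C]$ unfolds, block by block for $j > i$, into the matrix Sylvester-type equation
\begin{equation*}
\malop{\ellmahl}(N_{i,j})\, A_j - A_i\, N_{i,j} = T'_{i,j} + \sum_{i < k < j} \bigl[\Theta_{i,k}\, N_{k,j} - \malop{\ellmahl}(N_{i,k})\, C_{k,j}\bigr].
\end{equation*}
Proceeding by induction on $j - i$ and invoking Proposition~\ref{lem:VsVs'} (closure of $\V_s$ under $\malop{\ellmahl}$ and under multiplication by $\polyraminfzero$), the right-hand side lies in $\Mat_{r_i, r_j}(\V_{j-i-1})$ (or $\Mat_{r_i, r_j}(\V_0)$ in the base case $j = i+1$), and one seeks $N_{i,j} \in \Mat_{r_i, r_j}(\V_{j-i})$. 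After conjugating $A_i, A_j$ to Jordan normal form by constant gauges—which preserves both $\mathfrak{V}_{\r}$ and the filtration $(\V_s)$—this reduces to scalar equations $\mu\, \malop{\ellmahl}(x) - \lambda x = y$ with $\lambda, \mu \in \Qbar^{\times}$ and $y \in \V_s$. The family $(\xi_{\tuple})_{\tuple \in \ala}$ is tailor-made for these: for instance, if $y = z^{-\gamma}\xi_{\tuple}$ with $\gamma > 0$ and $\tuple = (\balpha, \lambd, \a) \in \alat{s}$, then setting $\tuple' = ((0, \alpha_1, \ldots, \alpha_s), (\lambda/(\mu \lambda_1 \cdots \lambda_s), \lambd), (\gamma, \a)) \in \alat{s+1}$, a direct computation from~\eqref{eq:def_xi_summable} shows that $x = \xi_{\tuple'}/\lambda \in \V_{s+1}$ solves the equation exactly.

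The main obstacle lies in the degenerate cases—$\gamma = 0$ in the scalar equation, or non-semisimple Jordan parts of $A_i, A_j$—which require a secondary induction on the nilpotent order of the Jordan blocks and on the $\alpha$-parameter of $\xi_{\tuple}$. Lemma~\ref{lem:phichi-lamddchi}, which expresses $\malop{\ellmahl}(\xi_{\tuple}) - (\lambda_1 \cdots \lambda_s)\xi_{\tuple}$ as a combination of $\xi_{\tuple'}$ with strictly smaller first $\alpha$-component or strictly smaller tuple length, provides exactly the identity needed to run these inductions: each new correction term lives one step lower in the filtration, so finitely many iterations produce a solution in $\V_{j-i}$. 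Proposition~\ref{lem:VsVs'} guarantees that all constructed solutions remain in the prescribed piece $\V_{j-i}$, closing the induction.
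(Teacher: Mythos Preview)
Your proposal is essentially correct and follows the same two-stage gauge reduction as the paper: a $\puis$-gauge to reach the intermediate form $\Theta$, then a gauge in $\mathfrak{V}_{\r}$ to reach the constant $C$. The paper splits your first stage into two (first make the diagonal blocks constant, then kill the positive-valuation parts of the off-diagonal blocks), but this is cosmetic.

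The one substantive difference is in how the Sylvester-type equation $\malop{\ellmahl}(N_{i,j})A_j-A_iN_{i,j}=\text{(known, in }\V_{j-i-1})$ is solved. You propose to conjugate $A_i,A_j$ to Jordan form, decouple into scalar equations $\mu\,\malop{\ellmahl}(x)-\lambda x=y$, and solve these explicitly, handling the non-semisimple coupling and the $\gamma=0$ case by secondary inductions driven by Lemma~\ref{lem:phichi-lamddchi}. The paper instead writes the matrix solution in closed form as the summable series
\[
F=\sum_{k\ge 1}\malop{\ellmahl}^{-k}(h)\,C_1^{k-1}RC_2^{-k},
\]
applies the Dunford--Jordan decomposition of $C_1$ and $C_2^{-1}$, and expands via Newton's binomial to reduce directly to the scalar sums $\sum_{k\ge1}k^{\alpha}c^{k}\malop{\ellmahl}^{-k}(h)$, which are verified to lie in $\V_{s+1}$ using the inductive formula~\eqref{eq:def_xi_summable}. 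This is cleaner: the nilpotent parts of $C_1,C_2$ contribute only the polynomial factor $k^{\alpha}$, so no separate induction on Jordan rank is needed, and the $\gamma=0$ case (your ``degenerate'' case, $h=\xi_{\tuple}$ with no $z^{-\gamma}$) is handled by a single change of summation variable rather than by repeatedly invoking Lemma~\ref{lem:phichi-lamddchi}. Your route does work, but the nested inductions---particularly the resonant subcase $\mu\lambda_1\cdots\lambda_t=\lambda$, where one must bump $\alpha_1$ up by one and then chase the resulting lower-order terms---are more delicate to execute cleanly than your sketch indicates.
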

\black

\begin{rem}\label{rem pour theo: fund syst sol princ}
One can always choose $\nbblocks=d$ and $r_{1}=\cdots=r_{d}=1$ in Theorem~\ref{theo: fund syst sol princ} by triangularizing the matrices $A_{i}$. However, we have stated the result in this form for the following reason:  if $L$ is a Mahler operator associated to the system \eqref{the mahler syst} by the cyclic vector lemma, then the proof of Theorem~\ref{theo: fund syst sol princ} shows that we can take $r_{1},\ldots,r_{\nbblocks}$ to be the multiplicities of the slopes of $L$ in the sense of \cite[Section~4]{RoquesFrobForMahler}. This more precise information will not be exploited in this paper but could be of interest for other purposes. 
\end{rem}

\begin{proof}[Proof of Theorem~\ref{theo: fund syst sol princ}]\label{sec: proof of theo: fund syst sol princ}

The proof relies on the following three steps, justified in the next three sections. 
\vskip 5 pt

\noindent {\bf Step 1}. 
There exist an integer $\nbblocks \in \Z_{\geq 1}$, a tuple $\r=(r_{1},\ldots,r_{\nbblocks}) \in \Z_{\geq 1}^{\nbblocks}$ such that $r_{1}+\cdots+r_{\nbblocks}=d$ and a matrix $G \in \GL_{d}(\puis)$ such that  
\begin{equation}\label{syst block diag}
A':=G[A]
=
\malop{\ellmahl}(G)AG^{-1}
=  \left(\begin{array}{ccc}
A_{1}    &       &  *   \\ 
\bord & \ddots       &     \\ 
0 & \bord    & A_{\nbblocks}     \\ 
\end{array}\right) 
\end{equation}
for some  $A_{1} \in \GL_{r_{1}}(\Qbar), \ldots, A_{\nbblocks} \in \GL_{r_{\nbblocks}}(\Qbar)$. \vskip 5 pt
\noindent {\bf Step 2}. 
There exists a block upper triangular matrix of the form 
$$
H = \left(
\begin{array}{ccc}
I_{r_{1}}    &       &  *    \\ 
\bord & \ddots       &      \\ 
0 & \bord    & I_{r_{\nbblocks}}     
\end{array}\right)  \in \GL_{d}(\puis)
$$ 
such that the entries of the above-diagonal blocks of  
\begin{equation}\label{eq: equiv puis leq 0}
 A'':=H[A']
 =
\malop{\ellmahl}(H)A'H^{-1}
 =\left(
\begin{array}{ccc}
A_{1}    &       &  *   \\ 
\bord & \ddots       &      \\ 
0 & \bord    & A_{\nbblocks}     
\end{array}\right)
\end{equation}
belong to $\puisinfzero$. 
\vskip 5 pt
\noindent {\bf Step 3}. 
There exists $K \in \mathfrak{V}_{\r}$ such that 
\begin{equation}\label{eq:KA''C}
 K[A'']=C, 
\end{equation}
where $C \in \GL_{d}(\Qbar)$ is the constant term of the matrix $A''$. \vskip 5pt

We are now in a position to conclude the proof. Indeed, we have:
$$
K[H[G[A]]]=\malop{\ellmahl}(KHG)A(KHG)^{-1}=C.
$$ 
Thus, the matrices $F_{1}=(HG)^{-1}$, $F_{2}=K^{-1}$, $\Theta=H[G[A]]$ and $C$ have the properties required by Theorem~\ref{theo: fund syst sol princ}. 
\end{proof}

\subsubsection{Proof of Theorem~\ref{theo: fund syst sol princ}: justification of Step~1}

Our approach is based on a factorization property for  $\ellmahl$-Mahler operators, with which we begin. Consider a nonzero $\ellmahl$-Mahler operator  
\begin{equation*}
L= a_{d} \malopop^{d} + a_{d-1}\malopop^{d-1} + \cdots + a_{0} \in \mathcal{D}_{\puis}
\end{equation*}
with coefficients $a_{0},\ldots,a_{d} \in \puis$ such that $a_{0}a_{d}\neq 0$.  
Using the terminology from \cite[Section~4]{RoquesFrobForMahler}, we let  
\begin{itemize}
\item $
\mu_{1} < \cdots < \mu_{\nbblocks}
$  be 
the slopes of $L$ with  respective multiplicities $r_{1},\ldots,r_{\nbblocks}$; 
\item $c_{i,1},\ldots,c_{i,r_{i}} \in \Q^\times$ be the exponents (repeated with multiplicities) of $L$ attached to the slope $\mu_{i}$. 
\end{itemize}
For any $f = \sum_{\gamma \in \mathbb Q} f_\gamma z^\gamma \in \Hahn$, we let $\val f = \min \supp(f) \in \QQ \cup \{+\infty\}$ denote the valuation of $f$ and, if $f \neq 0$, $\cld f = f_{\val f} \in \Q \setminus\{0\}$ denote the coefficient of least degree.

\begin{prop}\label{prop fact op}
The operator $L$ has a factorization of the form 
$$
L=aL_{\nbblocks}\cdots L_{1} 
$$
where 
\begin{itemize}
	\item $a \in \puis^{\times}$ is such that $\val a=\val a_{0}$;  
	\item $\cld a = \prod_{i=1}^{k} \prod_{j=1}^{r_{i}} (-c_{i,j})^{-1} \cld a_{0}$;
	\item the $L_{i}$ are given by 
	$$
	L_{i}= (z^{\nu_{i}}\malopop- c_{i,r_{i}})h_{i,r_{i}}^{-1}\cdots (z^{\nu_{i}}\malopop-c_{i,1})h_{i,1}^{-1}
	$$
	for some $h_{i,j}\in \puis^{\times}$ with $\val h_{i,j}=0$, $\cld h_{i,j} = 1$ and 
	\begin{equation*}\label{def nui}
	\nu_{i} =(\ellmahl-1) (\ellmahl^{r_{1}+\cdots+r_{i-1}}(\mu_{i}-\mu_{i-1})+\cdots+\ellmahl^{r_{1}}(\mu_{2}-\mu_{1})+\mu_{1}). 
	\end{equation*} 
\end{itemize}
\end{prop}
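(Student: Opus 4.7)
I would argue by induction on the order $d$ of $L$, peeling off first-order right factors one at a time. The skeleton of the argument has three stages: (i) factoring $L$ into blocks corresponding to the distinct slopes, (ii) normalizing each block so its unique slope becomes $0$, and (iii) factoring each normalized block into first-order factors associated with its exponents.

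For stage (i), I would invoke the Newton polygon theory for $p$-Mahler operators developed in \cite[Section~4]{RoquesFrobForMahler}. Since the slopes $\mu_1<\cdots<\mu_\sigma$ are distinct, a Hensel-type argument gives a factorization $L=bM_\sigma\cdots M_1$ where each $M_i\in\mathcal{D}_{\puis}$ has order $r_i$ and a single slope $\mu_i$ (with multiplicity $r_i$), and $b\in\puis^\times$. The distinctness of the slopes is what lets one iteratively construct right factors coefficient by coefficient, using the contracting/dilating action of $\malopop$ on valuations.

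For stage (ii), the crucial observation is that precomposing on the right by an operator with a given slope shifts the Newton polygon of whatever sits to its left. Concretely, if I factor the rightmost block $M_1$ first, each first-order factor $(z^{\mu_1}\malopop - c_{1,j})h_{1,j}^{-1}$ pulled out of the right multiplies the effective Mahler indeterminate seen by the next block by $z^{\mu_1}$, which—after applying $\malopop$ with its $p$-dilation—contributes $(p-1)\mu_1$ to the valuation shift before the next block's slope $\mu_2$ takes effect. Iterating through blocks $M_1,M_2,\ldots,M_{i-1}$ and tracking the cumulative shift yields exactly
\[
\nu_i=(p-1)\bigl(p^{r_1+\cdots+r_{i-1}}(\mu_i-\mu_{i-1})+\cdots+p^{r_1}(\mu_2-\mu_1)+\mu_1\bigr),
\]
which is the exponent needed so that $z^{\nu_i}\malopop - c_{i,j}$ is the ``correct'' normalized first-order operator for extracting the $j$-th exponent of the $i$-th block once the previous blocks have already been extracted. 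The $c_{i,j}$ are then, by definition of the exponents attached to a slope, precisely the roots (with multiplicities) of the characteristic polynomial of the normalized block.

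For stage (iii), given a block with a single slope already normalized to $0$, factoring off a first-order operator $(z^{\nu_i}\malopop - c_{i,j})h_{i,j}^{-1}$ with $h_{i,j}\in\puis^\times$, $\val h_{i,j}=0$, $\cld h_{i,j}=1$, is a standard Puiseux-series construction: one determines $h_{i,j}$ by a recursion on the terms of its expansion, the recursion being solvable because $c_{i,j}$ is nonzero. Finally, the control of $a=bh_{\sigma,r_\sigma}\cdots h_{1,1}$: since every $h_{i,j}$ has valuation $0$ and leading coefficient $1$, we get $\val a=\val b$, and the constant term formula $a_0=a\prod_{i,j}(-c_{i,j})$ coming from evaluating the product $aL_\sigma\cdots L_1$ at $\malopop=0$ forces $\val a=\val a_0$ and $\cld a=\prod_{i,j}(-c_{i,j})^{-1}\cld a_0$.

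The main obstacle I expect is stage (ii): making rigorous the claim that the formula for $\nu_i$ is exactly the cumulative valuation shift induced by the previously extracted factors. This requires a careful bookkeeping of how slopes transform under right-multiplication by first-order Mahler operators, essentially because the Mahler substitution $z\mapsto z^p$ interacts nontrivially with the shift $z^{\nu_j}$. Everything else is either a direct appeal to \cite{RoquesFrobForMahler} or a routine Puiseux-series recursion.
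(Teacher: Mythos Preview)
Your sketch is a reasonable outline of the underlying argument, and in fact the paper does not give a self-contained proof at all: it simply observes that this is \cite[Proposition~15]{RoquesFrobForMahler} proved over $\Hahn$, and that the proof over $\puis$ is entirely similar. Your three-stage plan (slope factorization via the Newton polygon, normalization, then peeling off first-order factors within each single-slope block) is exactly the structure of the argument in that reference, so there is nothing to correct.

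One minor remark: your computation of $a_0$ by ``evaluating at $\malopop=0$'' is informal since $\malopop$ is not a central variable, but the intended statement---that the coefficient of $\malopop^0$ in $aL_\sigma\cdots L_1$ is $a\prod_{i,j}(-c_{i,j})h_{i,j}^{-1}$, hence has the stated valuation and leading coefficient---is correct once you track the $\malopop^0$-term through the noncommutative product.
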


\begin{proof}
This result is proved in \cite[Proposition~15]{RoquesFrobForMahler} over $\Hahn$ instead of $\puis$; the proof in the present case is entirely similar. 
\end{proof}

We are now in a position to justify Step~1. 
With the notations of Section~\ref{sec: from syst to mod and vv}, it is equivalent to prove that the $\ellmahl$-Mahler module $M:=M_{A}$ over $\puis$ associated with $\malop{\ellmahl} (Y)= AY$ has a filtration 
$
\{0\}=M_{0}\subset M_{1} \subset \cdots \subset M_{\nbblocks}=M
$ 
such that, for all $i\in \{0,\ldots,\nbblocks-1\}$,  
$$
M_{i+1}/M_{i} \cong M_{A_{i}}
$$ 
for some $A_{i} \in \GL_{r_{i}}(\Qbar)$ where $r_{i} = \dim_{\puis} M_{i+1}/M_{i}$. 
Let us prove this. According to the cyclic vector lemma (see Proposition~\ref{cyclic vect lem}), there exists a nonzero $L \in \dpuis$ such that 
$
M \cong \dpuis/\dpuis L
$.
The factorization 
$$
L=aL_{\nbblocks} \cdots L_{1}
$$
given by Proposition~\ref{prop fact op} induces a filtration 
$
\{0\}=M_{0}\subset M_{1} \subset \cdots \subset M_{\nbblocks}=M
$ 
such that, for all $i \in \{0,\ldots,\nbblocks-1\}$, 
$$M_{i+1}/M_{i} \cong \dpuis/\dpuis L_{i}.$$ 
Note that 
\begin{equation*}
\dpuis/\dpuis L_{i} \cong \dpuis/\dpuis \widetilde{L}_{i} 
\end{equation*}
with 
\begin{equation*}
\widetilde{L}_{i} = z^{\frac{\nu_{i}}{\ellmahl-1}} L_{i} z^{-\frac{\nu_{i}}{\ellmahl-1}}= (\malopop- c_{i,r_{i}})h_{i,r_{i}}^{-1}\cdots (\malopop-c_{i,1})h_{i,1}^{-1}
=\sum_{j=0}^{r_{i}}\widetilde{a}_{j} \malopop^{j}
\end{equation*}
for some $\widetilde{a}_{j} \in \puis$ with non-negative $z$-adic valuation and $\widetilde{a}_{0}(0) \widetilde{a}_{r_{i}}(0) \neq 0$. 
Moreover, as recalled at the end of Section~\ref{sec: from syst to mod and vv}, we have 
\begin{equation*}
 \dpuis/\dpuis \widetilde{L}_{i}  \cong M_{\widetilde{A}_{i}}
\end{equation*}
where $\widetilde{A}_{i} \in \GL_{r_{i}}(\puis)$ is the contragredient of the matrix  of the $\ellmahl$-Mahler system associated with $\widetilde{L}_{i}$ by \eqref{eq:syst assoc eq}. The above mentioned properties of the $\widetilde{a}_{j}$ imply that the entries of $\widetilde{A}_{i}$ have non-negative $z$-adic valuation and that $\widetilde{A}_{i}(0) \in \GL_{r_{i}}(\Qbar)$. 
So, it follows from 
Proposition~\ref{prop: gauge eq cst reg sing case} that the systems $\malop{\ellmahl} (Y)= \widetilde{A}_{i}Y$ and $\malop{\ellmahl} (Y)= \widetilde{A}_{i}(0)Y$ are $\puis$-equivalent and, hence, that $M_{\widetilde{A}_{i}} \cong M_{\widetilde{A}_{i}(0)}$. Setting $A_i=\widetilde{A}_i(0)$, this concludes the justification of Step~1.

\subsubsection{Proof of Theorem~\ref{theo: fund syst sol princ}: justification of Step~2}
We let $\mathfrak{H}_{\r}$ be the group of block upper triangular matrices of the form 
$$
\left(
\begin{array}{ccc}
	I_{r_{1}}    &       &  *    \\ 
	\bord & \ddots       &      \\ 
	0 & \bord    & I_{r_{\nbblocks}}     
\end{array}\right)  \in \GL_{d}(\puis)
$$ 
and we let $\E$ be the set of block upper triangular matrices of the form
$$
\left(
\begin{array}{ccc}
	A_1    &       &  *    \\ 
	\bord & \ddots       &      \\ 
	0 & \bord    & A_{\nbblocks}     
\end{array}\right)  \in \GL_{d}(\puis). 
$$
Note that the matrix $A'$ given by Step~1 belongs to $\E$. In what follows, for any $B \in \E$ and any $(i,j) \in \{1,\ldots,\nbblocks\}^{2}$, we let $B_{i,j}$ be the $(i,j)$-block of $B$.

Our strategy for justifying Step~2 is as follows. We equip the set $\mathcal S:=\{(k,l) \in \{1,\ldots,\nbblocks\}^2 \ \vert \ k<l\}$ with the strict total order $<_{\mathcal S}$ defined by $(k,l)<_{\mathcal S} (k',l')$ if either ($l=l'$ and $k'<k$) or ($l<l'$). We have 
	$$(1,2)<_{\mathcal S} (2,3)<_{\mathcal S} (1,3)<_{\mathcal S} (3,4) <_{\mathcal S} (2,4)<_{\mathcal S} (1,4)  <_{\mathcal S} \cdots <_{\mathcal S} (\nbblocks-1,\nbblocks).$$
We shall construct, for any $(k,l) \in \mathcal S$, a matrix $B^{[k,l]} \in \E$ satisfying the following properties:  
\begin{enumerate}
 \item\label{eq:cond syst equiv} the systems $\malop{\ellmahl} Y=A'Y$ and $\malop{\ellmahl} Y=B^{[k,l]}Y$ are $\puis$-equivalent {\it via} a gauge transformation in $\mathfrak{H}_{\r}$; 
 \item\label{eq:syst coeff dans puis leq 0} for all $(i,j) \in\mathcal S$ with $(i,j)<_{\mathcal S} (k,l)$ or $(i,j)=(k,l)$, the $(i,j)$-block $(B^{[k,l]})_{i,j}$ of $B^{[k,l]}$ has coefficients in $\puisinfzero$.
\end{enumerate}
In particular, the systems $\malop{\ellmahl} Y=A'Y$ and $\malop{\ellmahl} Y=B^{[\nbblocks-1,\nbblocks]}Y$ are $\puis$-equivalent {\it via} a gauge transformation in $\mathfrak{H}_{\r}$ and $A''=B^{[\nbblocks-1,\nbblocks]}$ is of the form \eqref{eq: equiv puis leq 0}, thereby justifying Step~2.

Our construction will use the matrices defined, for any $(i,j) \in \mathcal S$ and $M \in \Mat_{r_{i},r_{j}}(\puis)$, by 
$$
T_{i,j}(M)=\left(
\begin{array}{cccc}
	I_{r_{1}} & & \hspace*{-.4cm}\overset{j\text{th  column}}{\downarrow} &
	\\ & \ddots &\hspace*{-.4cm} M & \hspace*{-.6cm}\overset{{\scriptsize{i\text{th  row}}}}{\leftarrow}
	\\ && \hspace*{-.4cm}\ddots &
	\\  &  && \hspace*{-.6cm} I_{r_{\nbblocks}}  \\ 
\end{array}\right) \in \mathfrak{H}_{\r},
$$
where all the unspecified blocks are zero. Straightforward calculations show that 
$$
T_{i,j}(M)^{-1}=T_{i,j}(-M),
$$ 
Moreover, for any $B \in \E$, it is easily seen that the matrix $T_{i,j}(M)[B]$ belongs to $\E$ and that its $(k,l)$-block is equal to 
\begin{equation}\label{eq:TijMB}
\left\{\begin{array}{ll}
-A_{i}M(z)+B_{i,j} + M(z^{\ellmahl})A_{j} & \text{ if } (k,l)=(i,j),
\\   B_{i,l}+ M(z^{\ellmahl})B_{j,l} & \text{ if } k= i \text{ and } l >j,
\\  -B_{k,i}M(z)+B_{k,j} & \text{ if } k <i \text{ and } l=j,
\\ B_{k,l} & \text{ otherwise.}\end{array}\right.
\end{equation}

We are now in a position to explain the construction of the matrices $B^{[k,l]}$, which is recursive with respect to $<_{\mathcal S}$. 
In what follows, for any $F=\sum_{\gamma \in \QQ} F_{\gamma} z^{\gamma} \in \Mat_{d}(\Hahn)$, we set 
\begin{equation}\label{eq:nota_troncation}
F^{0}=F_{0},\qquad F^{<0}=\sum_{\gamma \in \QQ_{<0}} F_{\gamma} z^{\gamma} \quad \text{ and }\quad F^{>0}=\sum_{\gamma \in \QQ_{>0}} F_{\gamma} z^{\gamma},
\end{equation} 
so that $F=F^{<0}+F^{0}+F^{>0}.$
\vskip 5pt
\noindent {\bf Base case: construction of $B^{[1,2]}$.} According to Lemma~\ref{lem phiFA1-A2F bis} below, there exists $M  \in \Mat_{r_{1},r_{2}}(\puis)$ such that $\malop{\ellmahl}(M)A_{2}-A_{1}M=-({A}_{1,2}')^{>0}$. Then, using~\eqref{eq:TijMB}, we see that the matrix
$$
B^{[1,2]}=T_{1,2}(M)[A'] 
$$
belongs to $\E$ and that 
$$
(B^{[1,2]})_{1,2} = A_{1,2}'+\malop{\ellmahl}(M)A_2-A_1M =A_{1,2}-({A}_{1,2}')^{>0}=({A}_{1,2}')^{\leq0}
$$ 
has coefficients in $\puisinfzero$. 
\vskip 5pt
\noindent {\bf Inductive step: construction of $B^{[k,l]}$ from its predecessor.}
Let $(k,l) \in \mS \setminus \{ (1,2)\}$. Let $(k',l')$ be the predecessor of $(k,l)$ in $\mS$ with respect to $<_{\mathcal S}$ and suppose that $B^{[k',l']}$ has already been built. 
According to Lemma~\ref{lem phiFA1-A2F bis} below, there exists $M \in \Mat_{r_{k},r_{l}}(\puis)$ such that $\malop{\ellmahl}(M)A_{l}-A_{k}M=-(B^{[k',l']})_{k,l}^{>0}$. Then, using \eqref{eq:TijMB}, we see that the matrix 
$$
B^{[k,l]}=T_{k,l}(M)[B^{[1,2]}] 
$$
belongs to $\E$ and that the matrices 
\begin{eqnarray*}
 (B^{[k,l]})_{i,j}&=&(B^{[k',l']})_{i,j}, \ \ \text{ for any } (i,j) \in {\mathcal S} \text{ such that } (i,j)<_{\mathcal S}(k,l),\\ 
 (B^{[k,l]})_{k,l}&=&(B^{[k',l']})_{k,l}+\malop{\ellmahl}(M)A_l-A_kM =(B^{[k',l']})_{k,l}^{\leq0}
\end{eqnarray*}
have coefficients in $\puisinfzero$. This concludes the inductive step.
\vskip 5pt
To complete the justification of Step~2, it only remains to prove the following result. 
\begin{lem}\label{lem phiFA1-A2F bis}
Let $d_1,d_2 \geq 1$ be integers. Consider $C_{1} \in \GL_{d_1}(\Qbar)$, $C_{2} \in \GL_{d_2}(\Qbar)$ and $B \in \Mat_{d_1,d_2}(\puis)$ whose entries have positive valuation. There exists $M \in \Mat_{d_1,d_2}(\puis)$ such that 
\begin{equation}\label{eq:phiFA1-A2F bis}
C_{1}M-\malop{\ellmahl}(M)C_{2}=B.  
\end{equation}
\end{lem}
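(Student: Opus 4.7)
The natural approach is to solve \eqref{eq:phiFA1-A2F bis} by formal iteration. Since $C_1$ is invertible, the equation rewrites as
$$M = C_1^{-1} B + C_1^{-1}\malop{\ellmahl}(M)C_2,$$
and iterating this identity suggests the closed-form candidate
$$M_0 = \sum_{k \geq 0} C_1^{-(k+1)}\, \malop{\ellmahl}^k(B)\, C_2^k.$$
My plan is to show that this formal sum defines an element of $\Mat_{d_1,d_2}(\puis)$ and that it satisfies \eqref{eq:phiFA1-A2F bis}.

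For the first point, since the entries of $B$ lie in $\puis$, there exists $N \in \Z_{\geq 1}$ such that all their supports are contained in $\frac{1}{N}\Z$, and by hypothesis the minimum valuation among the entries of $B$ is some real $v_0 > 0$. Since $\malop{\ellmahl}^k$ sends $z^\gamma$ to $z^{p^k\gamma}$ and $p^k \in \Z$, the matrix $\malop{\ellmahl}^k(B)$ has entries supported in $\frac{1}{N}\Z$ with valuations at least $p^k v_0$. Multiplying by the constant matrices $C_1^{-(k+1)}$ and $C_2^k$ preserves these support properties, so the total support of $M_0$ is contained in $\frac{1}{N}\Z \cap \QQ_{>0}$ and intersects each interval $(-\infty,\gamma]$ in a finite set. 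This guarantees both the $z$-adic convergence of the series and the membership of $M_0$ in $\Mat_{d_1,d_2}(\puis)$.

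For the second point, a direct telescoping calculation gives
\begin{align*}
C_1 M_0 - \malop{\ellmahl}(M_0)C_2 &= \sum_{k \geq 0} C_1^{-k}\malop{\ellmahl}^k(B)C_2^k - \sum_{k \geq 0} C_1^{-(k+1)}\malop{\ellmahl}^{k+1}(B)C_2^{k+1} = B,
\end{align*}
using that $\malop{\ellmahl}(C_1^{-(k+1)}) = C_1^{-(k+1)}$ since $C_1$ has entries in $\Qbar$, and reindexing $k \mapsto k+1$ in the second sum so that only the $k=0$ term of the first sum survives.

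No step presents a genuine obstacle. The two points requiring care are: iterating $\malop{\ellmahl}$ must not inflate the Puiseux ramification denominator, which is immediate from $p \in \Z$; and the hypothesis $\val B_{ij} > 0$ is used in a crucial way, since it ensures $p^k v_0 \to \infty$ and hence the $z$-adic convergence of the series. The invertibility of $C_1, C_2$ over $\Qbar$ is exactly what is needed to make the formula meaningful, and no further spectral data about them enters.
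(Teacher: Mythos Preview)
Your proof is correct and takes essentially the same approach as the paper: both write down the explicit solution $M=\sum_{k\ge 0} C_1^{-k-1}\malop{\ellmahl}^k(B)C_2^k$, with the paper simply declaring the verification an ``easy calculation'' while you spell out the convergence in $\puis$ and the telescoping check. One small remark: the formula only uses nonnegative powers of $C_2$, so the invertibility of $C_2$ is not actually needed here.
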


\begin{proof}
An easy calculation shows that the following matrix satisfies \eqref{eq:phiFA1-A2F bis}:
$$
M =\sum_{k \geq 0} C_{1}^{-k-1} \malop{\ellmahl}^k(B) C_{2}^{k} \in \Mat_{d_1,d_2}(\puis).
$$
\end{proof}

\subsubsection{Proof of Theorem~\ref{theo: fund syst sol princ}: justification of Step~3}
We use the following notation for the matrix $A''$ given by Step~2:
$$
A''=\left(
\begin{array}{cccc}
	A_{1} & & \hspace*{-.4cm}\overset{j\text{th column}}{\downarrow} &
	\\ & \ddots &\hspace*{-.4cm} A''_{i,j} & \hspace*{-.6cm}\overset{{\scriptsize{i\text{th row}}}}{\leftarrow}
	\\ && \hspace*{-.4cm}\ddots &
	\\  0 &  && \hspace*{-.6cm} A_{\nbblocks}  \\ 
\end{array}\right)
$$
with $A''_{i,j} \in \Mat_{r_{i},r_{j}}(\puisinfzero)$.
For any 
$$
K=\left(
\begin{array}{cccc}
	I_{r_{1}} & & \hspace*{-.4cm}\overset{j\text{th  column}}{\downarrow} &
	\\ & \ddots &\hspace*{-.4cm}   K_{i,j} & \hspace*{-.6cm}\overset{{\scriptsize{i\text{th  row}}}}{\leftarrow}
	\\ & & \hspace*{-.4cm}\ddots &
	\\   0  &  && \hspace*{-.6cm} I_{r_{\nbblocks}}  \\ 
\end{array}\right) \in \mathfrak{V}_{\r}, 
$$
the equation \eqref{eq:KA''C} 
is equivalent, with the notations \eqref{eq:nota_troncation}, to:   for all $(i,j)\in\mathcal S:=\{(k,l) \in \{1,\ldots,\nbblocks\}^2 \ \vert \ k<l\}$, 
$$
\sum_{k=i}^{j} \malop{\ellmahl}(K_{i,k}) A''_{k,j} =   \sum_{k=i}^{j} (A''_{i,k})^{0} K_{k,j}. 
$$
This can be rewritten as follows: for all $(i,j) \in \mathcal S$, 
\begin{multline}\label{eq Fij}
 \malop{\ellmahl}(K_{i,j}) A_{j}-A_{i}K_{i,j}=   \sum_{k=i+1}^{j} (A''_{i,k})^{0} K_{k,j}-\sum_{k=i}^{j-1}  \malop{\ellmahl}(K_{i,k}) A''_{k,j}  \\
=-(A''_{i,j})^{<0} +   \sum_{k =i+1}^{j-1} (A''_{i,k}) K_{k,j} -\sum_{k =i+1}^{j-1}  \malop{\ellmahl}(K_{i,k}) A''_{k,j}
\end{multline}
where the sums $ \sum_{k =i+1}^{j-1}$ are $0$ if $j-1<i+1$. 
Let us prove the existence of such matrices $K_{i,j} \in \Mat_{r_{i},r_{j}}(\V_{j-i})$ by induction on $\delta=j-i \in \{1,\ldots,\nbblocks-1\}$. 
\vskip 5 pt
\noindent {\bf Base case $\delta = 1$.} The case $\delta=1$ corresponds to $j=i+1$. In this case, \eqref{eq Fij} reduces to 
$$
 \malop{\ellmahl}(K_{i,j}) A_{j}-A_{i}K_{i,j}= -(A''_{i,j})^{<0}. 
$$
The right hand side $-(A''_{i,j})^{<0}$ has entries in $\V_{0}$ 
and Lemma~\ref{lem phiFA1-A2F} below ensures the existence of $K_{i,j} \in \Mat_{r_{i},r_{j}}(\V_{1})$ satisfying the latter equation.
\vskip 5 pt
\noindent {\bf Inductive step.} Consider $\delta \in \{1,\ldots,\nbblocks-2\}$ and assume the existence of $K_{i,j} \in \Mat_{r_{i},r_{j}}(\V_{j-i})$ satisfying \eqref{eq Fij} for all $(i,j)\in\mathcal S$ such that $j-i \leq \delta$. Then, for any $(i,j)\in\mathcal S$ with $j-i=\delta+1$, the right hand side of \eqref{eq Fij} is known and has entries in $\V_{\delta}$ and  Lemma~\ref{lem phiFA1-A2F} below ensures that we can find $K_{i,j} \in \Mat_{r_{i},r_{j}}(\V_{\delta+1})$ satisfying \eqref{eq Fij}. This proves the inductive step.  \vskip 5 pt

To complete the proof of Step~3, it remains to prove the following lemma. At first glance, it may appear similar to Lemma~\ref{lem phiFA1-A2F bis}, but they actually address fundamentally distinct situations: in Lemma~\ref{lem phiFA1-A2F bis}, $B$ has Puiseux series coefficients with positive valuation, whereas in Lemma~\ref{lem phiFA1-A2F} its coefficients are in $\V_{s}$ and, hence,  are either $0$ or Hahn series with support in $\mathbb Q_{<0}$.

\begin{lem}\label{lem phiFA1-A2F}
Let $s\geq 0$ and $d_1,d_2 \geq 1$ be integers. Consider $C_{1} \in \GL_{d_1}(\Qbar)$, $C_{2} \in \GL_{d_2}(\Qbar)$ and $B \in \Mat_{d_1,d_2}(\V_s)$. There exists $F \in \Mat_{d_1,d_2}(\V_{s+1})$ such that 
\begin{equation}\label{eq:phiFA1-A2F}
\malop{\ellmahl}(F)C_{2}-C_{1}F=B.  
\end{equation}
\end{lem}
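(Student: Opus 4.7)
The plan is to mirror Lemma~\ref{lem phiFA1-A2F bis}, but iterate in the opposite direction since the entries of $B$ now lie in $\Hahn^{<0}$ rather than having positive valuation. I would set
\[
F := \sum_{j \geq 1} C_1^{j-1}\, \malop{\ellmahl}^{-j}(B)\, C_2^{-j}.
\]
Each generator $z^{-\gamma}\xi_{\tuple}$ of $\V_s$ has support in $\QQ_{<0}$, so $\V_s \subset \Hahn^{<0}$; hence Lemma~\ref{lem: phik summable} applied entrywise ensures the family is summable in $\Mat_{d_1,d_2}(\Hahn)$. A direct telescoping computation --- analogous to that of Lemma~\ref{lem phiFA1-A2F bis} but with the shift indices reversed --- then yields $\malop{\ellmahl}(F)\, C_2 - C_1 F = B$.

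The core task is to show $F \in \Mat_{d_1,d_2}(\V_{s+1})$. I would first reduce to the case where $C_1$ and $C_2$ are single Jordan blocks $\lambda I_{d_1}+N_1$ and $\mu I_{d_2}+N_2$ (with $\lambda,\mu \in \Qbar^\times$ and $N_1, N_2$ nilpotent shifts) by conjugation with constant invertible matrices (which preserves $B \in \Mat(\V_s)$) followed by block decomposition of the equation. Binomial expansion of $C_1^{j-1}$ and $C_2^{-j}$, which terminates thanks to the nilpotence of $N_1$ and $N_2$, then exhibits each entry of $F$ as a $\Qbar$-linear combination of sums of the form
\[
\sum_{j \geq 1} j^\alpha \nu^j\, \malop{\ellmahl}^{-j}(g), \qquad \nu := \lambda\mu^{-1} \in \Qbar^\times,\ g \in \V_s,\ \alpha \in \Z_{\geq 0}.
\]
By further linearity in $g$, it suffices to treat $g = z^{-\gamma}\xi_{\tuple}$ with $\tuple = (\balpha,\lambd,\a) \in \alat{t}$, $t \leq s$, $\gamma \in \QQ_{\geq 0}$, and $z^{-\gamma}\xi_{\tuple} \neq 1$. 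If $\gamma > 0$, the tuple
\[
\tuple' := \bigl((\alpha,\balpha),\,(\nu(\lambda_1\cdots\lambda_t)^{-1},\,\lambd),\,(\gamma,\a)\bigr) \in \alat{t+1},
\]
taken with the empty-product convention when $t=0$, satisfies by formula~\eqref{eq:def_xi_summable} the identity $\xi_{\tuple'} = \sum_{j \geq 1} j^\alpha \nu^j\, \malop{\ellmahl}^{-j}(z^{-\gamma}\xi_{\tuple})$, placing the sum in $\V_{t+1} \subset \V_{s+1}$.

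The remaining case $\gamma = 0$ (which forces $t \geq 1$) is the main obstacle. Expanding $\xi_{\tuple}$ via~\eqref{eq:def_xi_summable} with $\sigma := \lambda_1\cdots\lambda_t$ and interchanging summations yields
\[
\sum_{j \geq 1} j^\alpha \nu^j\, \malop{\ellmahl}^{-j}(\xi_{\tuple}) = \sum_{l \geq 2} S(l)\, \malop{\ellmahl}^{-l}\!\bigl(z^{-a_1}\xi_{\widecheck{\tuple}}\bigr),\quad S(l) := \sum_{k=1}^{l-1}(l-k)^\alpha k^{\alpha_1}\nu^{l-k}\sigma^{k}.
\]
An elementary computation (partial fractions applied to $\sum_k k^\beta r^k$ in the case $\nu \neq \sigma$, Faulhaber-type formulas when $\nu = \sigma$) shows that $S(l) = Q_1(l)\nu^l + Q_2(l)\sigma^l$ for some $Q_1, Q_2 \in \Qbar[l]$. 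Since $a_1 > 0$ and $\widecheck{\tuple} \in \alat{t-1}$, applying the previously treated $\gamma > 0$ case monomial by monomial to $Q_1$ and $Q_2$ (and absorbing the missing $l=1$ term, which lies in $\V_{t-1}$ thanks to Proposition~\ref{lem:VsVs'}) places each resulting sum in $\V_t \subset \V_{s+1}$. The sole nontrivial calculation is the identity $S(l) = Q_1(l)\nu^l + Q_2(l)\sigma^l$; once it is in hand, the Jordan reduction and the explicit ansatz for $F$ deliver the conclusion.
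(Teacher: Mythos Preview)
Your proof is correct and follows essentially the same route as the paper's: the same explicit series for $F$, the same reduction (via Dunford--Jordan/Jordan form and binomial expansion) to scalar sums $\sum_{j\geq 1} j^\alpha \nu^j \malop{\ellmahl}^{-j}(z^{-\gamma}\xi_{\tuple})$, the same direct application of~\eqref{eq:def_xi_summable} when $\gamma>0$, and the same partial-sum computation when $\gamma=0$. The only cosmetic differences are that the paper first writes $B=hR$ with scalar $h$ before expanding, and phrases the $\gamma=0$ step via a change of variable $k+k_1\to k_1$ and the ratio $\lambda_0=\nu/\sigma$ rather than your convolution $S(l)$; your formulation $S(l)=Q_1(l)\nu^l+Q_2(l)\sigma^l$ is in fact slightly cleaner in that it makes the two exponential pieces explicit from the start.
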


\begin{proof}
By $\Qbar$-linearity, it suffices to treat the case 
$$
B=hR
$$
with $R \in \Mat_{d_{1},d_{2}}(\Qbar)$ and $h \in \V_s$ of one of the following forms:
\begin{enumerate}[label=(\roman*)]
\item \label{h case 1} $h=z^{-\gamma}\xi_{\tuple}$ with $\tuple \in \ala_t$ for some $t \in \{0,\ldots,s\}$ and $\gamma \in \QQ_{>0}$; 
\item \label{h case 2} $h=\xi_{\tuple}$ with $\tuple \in \ala_t$ for some $t \in \{1,\ldots,s\}$.
\end{enumerate}
Since $h \in \V_s \subset \Hahn^{<0}$, Lemma~\ref{lem: phik summable} ensures that 
$(\malop{\ellmahl}^{-k}(h)C_{1}^{k-1} R C_{2}^{-k})_{k \geq 1}$ is summable. So, we can legitimately consider the matrix
$$
F = \sum_{k \geq 1} \malop{\ellmahl}^{-k}(h) C_{1}^{k-1}  R C_{2}^{-k},
$$
which has Hahn series entries and is easily seen to satisfy~\eqref{eq:phiFA1-A2F}. It only remains to prove that it belongs to $ \Mat_{d_1,d_2}(\V_{s+1})$.  
We let $C_{1}=D_{1}+N_{1}$ and $C_{2}^{-1}=D_{2}+N_{2}$ be the Dunford-Jordan decomposition of $C_{1}$ and $C_{2}^{-1}$ respectively ({\it i.e.}, for each $i \in \{1,2\}$, $D_{i}$ is diagonalizable, $N_{i}$ is nilpotent and $D_{i}N_{i}=N_{i}D_{i}$). Using the Newton binomial formula, we obtain 
$$
C_{1}^{k}
=\sum_{l=0}^{d_{1}-1} \binom{k}{l}D_{1}^{k-l}N_{1}^{l}
\text{\ \ and \ \ }
C_{2}^{-k}
=\sum_{l=0}^{d_{2}-1} \binom{k}{l}N_{2}^{l}D_{2}^{k-l}
$$ 
and we see that $F$ is a $\Qbar$-linear combination of terms of the form  
\begin{equation}\label{sol Di}
\sum_{k \geq 1} k^{\alpha} \malop{\ellmahl}^{-k}(h)  D_{1}^{k-1} S D_{2}^{k}
\end{equation}
with $S \in \Mat_{d_{1},d_{2}}(\Qbar)$ and $\alpha \in \Z_{\geq 0}$. In order to conclude the proof, it remains to prove that the entries of \eqref{sol Di} are in $\V_{s+1}$. Since the latter property is invariant by left and right multiplication by elements of $\GL_{d_1}(\Qbar)$ and $\GL_{d_2}(\Qbar)$ respectively, we can assume that the $D_{i}$ are diagonal, say $D_{i}=\operatorname{diag}(c_{i,1},\ldots,c_{i,d_i})$. In that case, setting $S=(s_{i,j})_{1\leq i\leq d_1,\,1 \leq j \leq d_2}$, we have 
$$
\eqref{sol Di} = \left(s_{i,j} \sum_{k \geq 1} k^{\alpha} c_{1,i}^{k-1}c_{2,j}^{k} h(z^{\frac{1}{\ellmahl^{k}}})  \right)_{1\leq i\leq d_1,\,1 \leq j \leq d_2}. 
$$  
Thus, it only remains to prove the following statement: for any $c \in \Qbar^\times$, for any $\alpha \in \Z_{\geq 0}$ and for any $h \in \V_{s}$ 
of the form \ref{h case 1} or \ref{h case 2}, the sum 
 \begin{equation}\label{eq:sum_negative_MahlerOp}
\sum_{k \geq 1} k^{\alpha}c^k\malop{\ellmahl}^{-k}(h) 
\end{equation}
belongs to $\V_{s+1}$.
Let us prove this. 
The first case~\ref{h case 1} follows directly from~\eqref{eq:def_xi_summable}. In the second case \ref{h case 2}, writing $\tuple=(\balpha,\lambd,\a)$ as in Notation~\ref{notation:omega, alpha, etc} and setting $\lambda_0=c/(\lambda_1\cdots\lambda_t)$, we have 
$$
\eqref{eq:sum_negative_MahlerOp}=\sum_{k,k_1,\ldots,k_t\geq 1} k^{\alpha}k_1^{\alpha_1}\cdots k_t^{\alpha_t}\lambda_0^k\lambda_{1}^{k+k_1}\cdots\lambda_t^{k+k_1+\cdots+k_t}z^{-\frac{a_1}{\ellmahl^{k+k_1}}-\cdots-\frac{a_t}{\ellmahl^{k+k_1+\cdots+k_t}}}.
$$
Making the change of variable $k+k_1 \to k_1$ in the summation, we get 
$$
\eqref{eq:sum_negative_MahlerOp}=\sum_{j=0}^{\alpha_1} \sum_{k_1\geq 2}\sum_{k_2,\ldots,k_t \geq 1} \theta_j(k_1)k_1^{j}k_2^{\alpha_2}\cdots k_t^{\alpha_t}\lambda_{1}^{k_1}\cdots\lambda_t^{k_1+\cdots+k_t}z^{-\frac{a_1}{\ellmahl^{k_1}}-\cdots-\frac{a_t}{\ellmahl^{k_1+\cdots+k_t}}}
$$
where $\theta_j(k_1)=\sum_{k=1}^{k_1-1}\binom{\alpha_1}{j}(-1)^{\alpha_1-j}k^{\alpha+\alpha_1-j}\lambda_0^k=:P_j(k_1)\lambda_0^{k_1}$ for some polynomials $P_j \in \Q[X]$. It follows that \eqref{eq:sum_negative_MahlerOp} is a $\Qbar$-linear combination of 
\begin{equation}\label{eq: sum start 2}
 \sum_{k_1\geq 2}\sum_{k_2,\ldots,k_t \geq 1} k_1^{j}k_2^{\alpha_2}\cdots k_t^{\alpha_t}(\lambda_{0}\lambda_1)^{k_1}\cdots\lambda_t^{k_1+\cdots+k_t}z^{-\frac{a_1}{\ellmahl^{k_1}}-\cdots-\frac{a_t}{\ellmahl^{k_1+\cdots+k_t}}}
\end{equation}
with $j \in \Z_{\geq 0}$. 
Since the sum in \eqref{eq: sum start 2} differs from the same sum starting at $k_1=1$ only by an element of $\V_{t-1}$, and since this extended sum clearly lies in 
 $\V_{t}$,  it follows that the original sum \eqref{eq: sum start 2} belongs to $\V_t$ as well. Thus \eqref{eq:sum_negative_MahlerOp} belongs to $\V_t$ and, therefore, to $\V_{s+1}$ since $t<s+1$. This ends the proof of the lemma.
\end{proof}

\subsection{Remarks on the proof of Theorem~\ref{theo: fund syst sol princ}}\label{subsec:rks proof theo fund syst sol princ}

The proof of Step~1 relies on the possibility of factoring any $\ellmahl$-Mahler operator into factors that each have a single slope.
Similar factorization properties are encountered in other types of operators, such as $q$-difference or differential operators. This implies that any $q$-difference or differential system can be brought—after a suitable gauge transformation—into block upper triangular form, with diagonal blocks having only one slope (see \cite{dVS11,VdPS}). What is particular to the Mahler case is that the diagonal blocks can be made constant. As seen in the proof of Step~1, this is due to the fact that systems associated with $\ellmahl$-Mahler operators having a single slope are regular singular in the sense of \cite[Section~5]{Ro15}. This property holds neither in the $q$-difference nor in the differential case, where regular singularity is a very specific situation corresponding to slope zero. Moreover, in the $q$-difference or differential case, a system that is block upper triangular with regular singular diagonal blocks is regular singular and, hence, can be put, after a suitable gauge transformation, in constant form. This is not true for $\ellmahl$-Mahler systems, as reflected by the fact that, in general, reducing such a system to constant form requires a gauge transformation with Hahn series coefficients—rather than merely Puiseux series coefficients.

\subsection{Solving the $\ellmahl$-Mahler system \eqref{the mahler syst}}\label{sec: fund mat sol rappels}

\subsubsection{Solving systems with constant coefficients} \label{sec: fund mat sol rappels the ring R}

In what follows, we let $\Rspe$ be a difference ring extension of $\Hahn$ 
with field of constants $\Qbar$ such that:
\begin{itemize}
	\item there exists $\logm \in \Rspe$ satisfying $\malop{\ellmahl}(\logm)=\logm+1$;
	\item for any $c \in \Q^\times$, there exists an invertible element $e_c \in \Rspe$ satisfying $\malop{\ellmahl}(e_c)=ce_c$.
\end{itemize}  
Such a ring $\Rspe$ exists. 
Indeed, let $(X_{c})_{c \in \Qbar^{\times}}$ and $Y$ be indeterminates over~$\Hahn$, and consider the quotient ring 
$$
\Rspe := \Hahn [(X_{c})_{c \in \Qbar^{\times}},Y]/I
$$
of the polynomial ring $\Hahn [(X_{c})_{c \in \Qbar^{\times}},Y]$ by its ideal $I$ generated by $\{X_{c}X_{d}-X_{cd} \ \vert \ c,d \in \Qbar^{\times}\} \cup\{X_{1}-1\}$.
Let $e_{c}$ (resp.~$\logm$) be the image of $X_{c}$ (resp.~$Y$) in $\Rspe$ and let us endow $\Rspe$ with the unique ring automorphism $\malop{p}$ extending  $\malop{p}: \Hahn \rightarrow \Hahn$ and such that
$$
\forall c \in \Qbar^{\times}, \ \malop{p}(e_{c}) = ce_{c} \text{ and } \malop{p}(\ell)=\ell+1.
$$
The pair $(\Rspe,\malop{p})$ is a difference ring extension of $(\Hahn,\malop{p})$ with field of constants $\Qbar$. We omit the proof of this assertion as it is entirely similar to the proof of the second assertion of \cite[Theorem~35]{Ro15}.  

We are now in a position to construct a fundamental matrix of solutions to any $\ellmahl$-Mahler system with constant coefficients. 
\begin{lem}\label{lem:sys_const}
	For any $C \in \GL_{d}(\Qbar)$, there exists  $e_{C} \in \GL_{d}(\Rspe)$ such that 
\begin{equation}\label{eq: eAz}
\malop{\ellmahl}(e_{C})=Ce_{C},
\end{equation}
whose entries are $\Qbar$-linear combinations of elements of the form $e_c\logm^j$ where 
$c$ is an eigenvalue of $C$ and $j \in \{0,\ldots,d-1\}$.
\end{lem}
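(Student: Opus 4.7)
The natural strategy is to reduce to the case of a single Jordan block via Jordan normal form over the algebraically closed field $\Qbar$, and then construct $e_{J_c}$ explicitly as $e_c$ times a unipotent matrix polynomial in $\logm$. Concretely, write $C = P J P^{-1}$ with $J$ block-diagonal, $J = \operatorname{diag}(J_{c_1}, \ldots, J_{c_s})$ where $J_{c_i}$ is a Jordan block of size $k_i$ and eigenvalue $c_i$. Since $P \in \GL_d(\Qbar)$ has constant entries, $\malop{\ellmahl}(P) = P$, so if we can build $e_J \in \GL_d(\R)$ with $\malop{\ellmahl}(e_J) = J e_J$ block-diagonally, then $e_C := P e_J$ satisfies $\malop{\ellmahl}(e_C) = P J e_J = C e_C$ and belongs to $\GL_d(\R)$, and its entries are $\Qbar$-linear combinations of the entries of $e_J$.

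The heart of the argument is thus to solve $\malop{\ellmahl}(M) = J_c M$ for a single Jordan block $J_c = cI_k + N$ of size $k \leq d$, where $N$ is the nilpotent shift with $N^k = 0$. Setting $M = e_c \, U$ and using $\malop{\ellmahl}(e_c) = c e_c$, the equation reduces to $\malop{\ellmahl}(U) = (I + c^{-1}N) U$. The key ansatz is
\begin{equation*}
U = \sum_{j=0}^{k-1} \binom{\logm}{j} (c^{-1}N)^j, \qquad \binom{\logm}{j} := \frac{\logm(\logm-1)\cdots(\logm-j+1)}{j!}.
\end{equation*}
Using $\malop{\ellmahl}(\logm) = \logm + 1$ and the Pascal identity $\binom{\logm+1}{j} = \binom{\logm}{j} + \binom{\logm}{j-1}$, a direct computation together with $N^k = 0$ yields $\malop{\ellmahl}(U) = (I + c^{-1}N) U$, as required. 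Moreover, $U$ is upper triangular with $1$'s on the diagonal, hence unipotent and invertible in $\GL_k(\R)$; and $e_c$ is invertible in $\R$. Therefore $e_{J_c} := e_c U$ lies in $\GL_k(\R)$, and its entries are $\Qbar$-linear combinations of the $e_c \logm^j$ with $0 \leq j \leq k-1 \leq d-1$.

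Putting the blocks together gives $e_J \in \GL_d(\R)$ with the claimed form, and then $e_C = P e_J$ concludes the proof, with the eigenvalues appearing being exactly those of $J$, i.e., of $C$.

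\textbf{Main obstacle.} There is no serious obstacle here: once one notices that the binomial polynomials $\binom{\logm}{j}$ are the correct ``logarithmic'' analogues of $\frac{\log^j}{j!}$ in this difference-ring setting (so that $\malop{\ellmahl}$ acts on them like a shift by $\binom{\logm}{j-1}$), the construction is forced. The only small point to verify carefully is the Pascal-style recursion combined with $N^k=0$, ensuring that the sum truncates and $\malop{\ellmahl}(U) = (I+c^{-1}N)U$ holds exactly. Invertibility is automatic from the unipotent structure of $U$ and the invertibility of $e_c$.
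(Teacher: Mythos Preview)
Your proof is correct, and the core idea---building the unipotent part via the binomial polynomials $\binom{\logm}{j}$ together with the Pascal identity $\binom{\logm+1}{j}=\binom{\logm}{j}+\binom{\logm}{j-1}$---is exactly the one the paper uses. The only structural difference is the decomposition chosen: you conjugate $C$ to its Jordan normal form $C=PJP^{-1}$ and solve block by block, setting $e_{J_c}=e_c\sum_{j=0}^{k-1}\binom{\logm}{j}(c^{-1}N)^j$; the paper instead uses the multiplicative Dunford--Jordan decomposition $C=UD$ with $U$ unipotent, $D$ diagonalizable and $UD=DU$, builds $e_U=\sum_{k\ge 0}\binom{\logm}{k}(U-I_d)^k$ and $e_D$ by diagonalizing $D$, and sets $e_C=e_Ue_D$. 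Both routes lead to the same $\Qbar$-span of the $e_c\logm^j$ with $c\in\operatorname{Sp}(C)$ and $j\le d-1$. Your approach is slightly more concrete (one sees the entries of each block explicitly), while the paper's is coordinate-free on the unipotent side and avoids splitting into blocks; neither offers a real advantage over the other here.
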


\begin{proof}
Let 
$
C=UD
$ 
be the multiplicative Dunford-Jordan decomposition of the matrix $C$, {\it i.e.}, $D \in \GL_{d}(\Qbar)$ is diagonalizable, $U \in \GL_{d}(\Qbar)$ is unipotent and $UD=DU$. 
We recall that $D$ and $U$ belong to $\Qbar[C]$. 
We set 
$$
\logm^{[k]}= \left\{
\begin{array}{cc}
\binom{\logm}{k}=\frac{\logm(\logm-1)\cdots (\logm-k+1)}{k!} & \mbox{if } k \in \Z_{\geq 0}, \\
0 & \mbox{if } k \in \Z_{\leq -1}.
\end{array}
\right.
$$
It follows from the equality $\malop{\ellmahl}(\logm)=\logm+1$ that 
$$
\malop{\ellmahl} (\logm^{[k]})=\logm^{[k]}+\logm^{[k-1]}
$$
and that (the finite sum) 
$$e_{U} = \sum_{k \geq 0} \logm^{[k]} (U-I_{d})^{k}\in \GL_{d}(\Rspe)
$$ 
satisfies 
$$
\malop{\ellmahl}(e_{U} )=U e_{U}.
$$ 
Note that $e_{U}$ belongs to $\Rspe[U] \subset \Rspe[C]$ and that $\det e_{U}=1$. 

Moreover, we consider $P \in \GL_{d}(\Qbar)$ and $c_{1},\ldots,c_{d} \in \Qbar^{\times}$ such that $$D = P \operatorname{diag}(c_{1},\ldots,c_{d})P^{-1}$$ and we set 
$$
e_{D} = P\operatorname{diag}(e_{c_{1}},\ldots,e_{c_{d}})P^{-1} \in \GL_{d}(\Rspe).
$$ 
This $e_{D}$ is independent of $P$ and satisfies
$$
\malop{}(e_{D} )=D e_{D}.
$$ 

Since $DU=UD$, the matrices $e_U$ and $D$ commute and it follows from what precedes that $e_{C}=e_{U}e_{D}$ has the required properties.
 \end{proof}

\subsubsection{Fundamental matrix of solutions of \eqref{the mahler syst}}\label{sec:fms}

Combining \eqref{eq:F1F2eC} with \eqref{eq: eAz}, we get that 
$$
Y_{0}=Fe_C=F_{1}F_{2}e_C \in \GL_{d}(\R)
$$
is a fundamental matrix of solutions of \eqref{the mahler syst}, where  $F_{1},F_{2},F,C$ are given by Theorem~\ref{theo: fund syst sol princ} and $e_{C}$ is given by Lemma~\ref{lem:sys_const}. 

\subsection{Proof of Theorem~\ref{thm:decomp_chi}}\label{sec:proof_thm_decomp_chi}
Consider the $\ellmahl$-Mahler system  
\begin{equation}\label{syst associated eq}
\malop{\ellmahl}(Y)=AY 
\end{equation}
associated with \eqref{eq: mahler1}, where $A$ is given by \eqref{eq:syst assoc eq}. 
We recall that the coefficients $a_i$ of \eqref{eq: mahler1} and, hence, the entries of $A$, belong to $\Kinf=\Q(z^{\frac{1}{\starpourexp}})$.

Section~\ref{sec:fms}, applied to \eqref{syst associated eq}, and whose notations we will use freely, provides us with a fundamental matrix of solutions $F_{1}F_{2}e_{C}$ of \eqref{syst associated eq}.  
Note that it follows from \eqref{eq:F1Theta=AF1} that $\malop{\ellmahl}(F_1)=AF_1\Theta^{-1}$; as $A$ and $\Theta$ have coefficients in $\Kinf$, this implies that the finite dimensional $\Kinf$-vector space spanned by the entries of $F_1$ is invariant under $\malop{\ellmahl}$ and, hence, that the entries of $F_1$ are $\ellmahl$-Mahler Puiseux series\footnote{We emphasize that we have used the fact that the coefficients of equation~\eqref{eq: mahler1} lie in $\Kinf$. If we only assume that they belong to $\puis$, then, in general, the entries of $F_1$ would no longer be $\ellmahl$-Mahler. }. 
Now, the entries $y_1,\ldots,y_d$ of the first row of $F_{1}F_{2}e_{C}$ are $\Qbar$-linearly independent solutions of \eqref{eq: mahler1}. Since the entries of $F_{1}$ (resp.~$F_{2}$) are $\ellmahl$-Mahler Puiseux series (resp.~elements of $\V$) and since the entries of $e_{C}$ are $\Qbar$-linear combinations of the $e_c\logm^j$ with $c \in \operatorname{Sp}(C)$ and $j \in \{0,\ldots,d-1\}$ (see Lemma~\ref{lem:sys_const}), we see that  
$$
y_i = \sum_{c \in \operatorname{Sp}(C)}\sum_{j \in \{0,\ldots,d-1\}}f_{i,c,j}e_c\logm^j 
$$
where the $f_{i,c,j}$ are finite sums of products of a $\ellmahl$-Mahler Puiseux series by an element of $\V$. It follows clearly from this and from the fact that any element of $\V$ is a $\puisinfzero$-linear combination of some $\xi_{\tuple}$ with $\tuple \in \ala$ that the $y_{i}$ are generalized $\ellmahl$-Mahler series. This concludes the proof.

\subsection{On the unicity of the fundamental matrix of solutions}\label{sec:unicity fms}
Section~\ref{sec:fms} provides us with a fundamental matrix of solutions $Fe_{C}=F_{1}F_{2}e_{C}$ of \eqref{syst associated eq}. Consider another fundamental matrix of solutions of \eqref{syst associated eq} of the form $Ge_D$ with $G \in \GL_d(\Hahn)$ and $D\in \GL_{d}(\Qbar)$. We claim that there exists $R \in \GL_{d}(\Qbar)$ such that
	\begin{itemize}
		\item[\rm (i)]$D=R^{-1}CR$,
		\item[\rm (ii)] $G=F_{1}G_2$ with $G_2=F_2R \in \GL_d(\V_{d-1})$.	
	\end{itemize} 
Indeed, since  $Fe_C$ and $Ge_D$ are fundamental matrices of solutions of the same system~\eqref{the mahler syst}, we have that $R=F^{-1}G \in \GL_d(\Hahn)$ satisfies $\malop{\ellmahl}(R)D=CR$.  Setting $R=\sum_{\gamma \in \QQ} R_{\gamma} z^{\gamma}$, we obtain, for all $\gamma \in \QQ$, $R_{\frac{\gamma}{\ellmahl}}D=CR_{\gamma}$. This implies that the support $\supp(R)$ of $R$ is invariant by multiplication by $\ellmahl$ and $\ellmahl^{-1}$. Since it is well-ordered, this implies that $\supp(R) \subset \{0\}$, {\it i.e.}, $R \in \GL_{d}(\Qbar)$ and our claim follows immediately.

\section{Standard decomposition}\label{sec:standard}
Recall that a generalized $\ellmahl$-Mahler series is an element of $\mathcal R$ of the form~\eqref{eq:generalized_mahler_series_bis}, {\it i.e.}, of the form 
	$$
\sum_{ (c,j) \in  \cj}\sum_{\tuple\in \ala} 
f_{c,j,\tuple} \xi_{\tuple}e_{c} \logm^{j}	
	$$
where $\cj=\Qbar^{\times}\times\Z_{\geq 0}$, the sums have finite support and each $f_{c,j,\tuple} \in \puis$ is a $\ellmahl$-Mahler Puiseux series.
The decomposition of a generalized $\ellmahl$-Mahler series into a $\Hahn$-linear combination of the $e_{c} \logm^{j}$ is unique, but this is not the case of the decomposition~\eqref{eq:generalized_mahler_series_bis} into a $\puis$-linear combination of the $\xi_{\tuple} e_{c} \logm^{j}$. For exemple, one has
$$
\xi_{(0),(1),(\ellmahl))}=z^{-1}+\xi_{((0),(1),(1))}.
$$
The following definition and proposition remedy this problem.

\begin{defi}\label{defi:gen p-mahl}
We will say that a  decomposition of the form \eqref{eq:generalized_mahler_series_bis}
 is standard if the tuples $\tuple$ involved in the support of the sum in \eqref{eq:generalized_mahler_series_bis} belong to the set 
$\alaNp$ defined as follows: 
$$
	\alaNp=\bigcup_{t \in \Z_{\geq 0}} \alaNpt{t}	
\text{ \ \ \  with \ \ \ }
\alaNpt{t} = \Z_{\geq 0}^{t} \times (\Qbar^{\times})^{t} \times \N_{(\ellmahl)}^{t}, 
$$
where $\N_{(\ellmahl)}$ is the set of positive rational numbers whose denominators are relatively prime with $\ellmahl$ and whose numerators are not divisible by $\ellmahl$. 
\end{defi}

\begin{prop}\label{prop:gen p-mahl}
Any generalized $\ellmahl$-Mahler series $f$ has a unique standard decomposition. Furthermore, the Puiseux series involved in the standard decomposition of $f$ are $\puisinfzero$-linear combinations of the Puiseux series $f_{c,j,\tuple}$ involved in any decomposition of $f$ of the form \eqref{eq:generalized_mahler_series_bis}. 
\end{prop}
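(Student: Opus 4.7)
The proposition has two substantial parts: existence of a standard decomposition and uniqueness; the $\polyraminfzero$-linearity claim for the standard Puiseux coefficients in terms of those of any decomposition will follow from the existence proof itself. An underlying arithmetic fact is that every $a\in\QQ_{>0}$ admits a unique factorization $a=\ellmahl^{m}a'$ with $m\in\Z$ and $a'\in\N_{(\ellmahl)}$: writing $\ellmahl=\prod p_{i}^{e_{i}}$, one is forced to take $m=\lfloor\min_{i}v_{p_{i}}(a)/e_{i}\rfloor$. This defines an ``$\ellmahl$-adic valuation'' $v_{\ellmahl}:\QQ^{\times}\to\Z$, and a direct check shows it is ultrametric: $v_{\ellmahl}(x+y)\geq\min(v_{\ellmahl}(x),v_{\ellmahl}(y))$, with equality when these two values differ.

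\textbf{Existence.} The plan is to show that every $\xi_{\tuple}$ with $\tuple\in\ala$ lies in the $\polyraminfzero$-span of $\{\xi_{\tuple'}:\tuple'\in\alaNp\}$. The core mechanism is a shift identity obtained by substituting $a_{1}=\ellmahl b$ in the inductive formula~\eqref{eq:def_xi_summable} and re-indexing $k=k'+1$; after identifying the inductive shape of $\xi$ on the right-hand side, a direct calculation yields
\begin{equation*}
\xi_{(\balpha,\lambd,(\ellmahl b,\aspt))}(z)=(\lambda_{1}\cdots\lambda_{t})\,z^{-b}\,\xi_{(\balphaspt,\lambdspt,\aspt/\ellmahl)}(z)+(\lambda_{1}\cdots\lambda_{t})\sum_{j=0}^{\alpha_{1}}\binom{\alpha_{1}}{j}\,\xi_{((j,\alpha_{2},\ldots,\alpha_{t}),\lambd,(b,\aspt/\ellmahl))}(z).
\end{equation*}
Isolating the $j=\alpha_{1}$ term trades $\xi_{(\balpha,\lambd,(\ellmahl b,\aspt))}$ against $\xi_{(\balpha,\lambd,(b,\aspt/\ellmahl))}$ modulo a $\polyraminfzero$-multiple of a strictly lower-depth $\xi$ and $\xi$'s with strictly smaller $\alpha_{1}$. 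An analogous direct calculation, re-indexing the inner summation on a single $k_{i}\mapsto k_{i}\pm 1$, yields a component-wise shift identity trading $\xi_{(\balpha,\lambd,\a')}$ against $\lambda\cdot\xi_{(\balpha,\lambd,\a)}$ (plus lower-$\balpha$ and lower-depth terms with coefficients in $\polyraminfzero$) whenever $\a'$ differs from $\a$ by replacing a single $a_{i}$ with $\ellmahl a_{i}$. Combining these with $\malop{\ellmahl}^{\pm 1}(\xi_{(\balpha,\lambd,\a)})=\xi_{(\balpha,\lambd,\ellmahl^{\pm 1}\a)}$, a well-founded induction on the triple $\bigl(t,\alpha_{1},\sum_{i}|v_{\ellmahl}(a_{i})|\bigr)$ rewrites any $\xi_{\tuple}$ as a $\polyraminfzero$-linear combination of standard $\xi_{\tuple'}$'s. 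Substituting into~\eqref{eq:generalized_mahler_series_bis} produces a standard decomposition whose Puiseux coefficients are $\polyraminfzero$-linear combinations of the original $f_{c,j,\tuple}$'s, settling existence and the linearity claim at once.

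\textbf{Uniqueness.} Since the $e_{c}\logm^{j}$ are $\Hahn$-linearly independent in $\R$, it suffices to show that, for each fixed $(c,j)$, the family $(\xi_{\tuple})_{\tuple\in\alaNp}$ is $\puis$-linearly independent in $\Hahn$. The crucial computation is that, for $\tuple\in\alaNpt{t}$ with $t\geq 1$, each exponent $-\sum_{i=1}^{t}a_{i}/\ellmahl^{k_{1}+\cdots+k_{i}}$ of $\xi_{\tuple}$ has $v_{\ellmahl}$-value \emph{exactly} $-(k_{1}+\cdots+k_{t})$: the summands have pairwise distinct valuations $-(k_{1}+\cdots+k_{i})$ (since $v_{\ellmahl}(a_{i})=0$), and the ultrametric inequality picks out the minimum. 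Hence $v_{\ellmahl}(\supp(\xi_{\tuple}))=\{-n:n\geq t\}$, with the maximal value $-t$ attained only at $k_{i}=1$ for all $i$, yielding a single monomial $\lambda_{1}\lambda_{2}^{2}\cdots\lambda_{t}^{t}\,z^{-\sum_{i}a_{i}/\ellmahl^{i}}$. By contrast, any $f\in\Qbar((z^{1/N}))$ satisfies $v_{\ellmahl}(\supp(f))\geq-v_{\ellmahl}(N)$. Given a relation $\sum_{\tuple\in S}f_{\tuple}\xi_{\tuple}=0$ with $f_{\tuple}$ not all zero, the ultrametric identity $v_{\ellmahl}(\beta+\gamma)=v_{\ellmahl}(\gamma)$ when $v_{\ellmahl}(\gamma)<v_{\ellmahl}(\beta)$ ensures that, at sufficiently low $v_{\ellmahl}$-levels $-L$, the $v_{\ellmahl}=-L$ piece of $f_{\tuple}\xi_{\tuple}$ factorizes cleanly as (a $v_{\ellmahl}$-truncation of $f_{\tuple}$) times (the $v_{\ellmahl}=-L$ piece of $\xi_{\tuple}$, a finite sum indexed by compositions $L=k_{1}+\cdots+k_{t(\tuple)}$).

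\textbf{Main obstacle.} The combinatorial heart of the uniqueness argument is then to show that these level-$-L$ pieces, indexed by compositions and tagged with $(\a,\lambd)$, are linearly independent over the relevant Puiseux-truncation data; the constraint $a_{i}\in\N_{(\ellmahl)}$ should force exponent distinctness across different $(\a,\lambd)$'s of equal depth, enabling an induction on $t_{\max}=\max_{\tuple\in S}t(\tuple)$ to peel off maximal-depth contributions and reduce to $t_{\max}-1$. This combinatorial step, ruling out accidental coincidences between exponents coming from different tuples of the same depth, is where the delicate part of the proof lies.
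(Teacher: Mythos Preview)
Your existence sketch is in the right spirit but the induction does not terminate as stated. Look at your first shift identity: isolating the $j=\alpha_{1}$ term sends $(\ellmahl b,\aspt)$ to $(b,\aspt/\ellmahl)$, which is nothing but global division of $\a$ by $\ellmahl$, i.e.\ the action of $\malop{\ellmahl}^{-1}$. This leaves $\sum_{i}|v_{\ellmahl}(a_{i})|$ unchanged (take $t=2$, $\a=(\ellmahl,1)\mapsto(1,1/\ellmahl)$, both with $\sum|v_{\ellmahl}|=1$), so your triple $(t,\alpha_{1},\sum_{i}|v_{\ellmahl}(a_{i})|)$ does not decrease. The component-wise identities you allude to do exist for $i\geq 2$ and do fix the other $a_{j}$'s, but there is no clean analogue for $i=1$: shifting $k_{1}$ alone in the defining sum moves every exponent $k_{1}+\cdots+k_{i}$. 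The paper avoids this trap by inducting only on $\balpha$ (with the order $\prec$) and using the inductive formula~\eqref{eq:def_xi_summable}: by induction $\xi_{\tuplespt}$ already lies in $\mathcal{W}_{s-1}$, and one then shows directly that $\sum_{k\geq 1}k^{\alpha}\lambda^{k}\malop{\ellmahl}^{-k}(z^{-\gamma}\xi_{\tuple'})\in\mathcal{W}_{s}$ for any standard $\tuple'$ of lower depth, via a case analysis on the sign of $v_{\ellmahl}(\gamma)$.

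The uniqueness gap you flag is genuine, and the $v_{\ellmahl}$-level stratification does not close it. Contributions from different depths collide at the same monomial: for $\ellmahl=2$, the depth-$2$ term with $\a=(1,1)$ at $k_{1}=k_{2}=1$ gives $z^{-1/2-1/4}=z^{-3/4}$, which is exactly the depth-$1$ term of $\a=(3)$ at $k=2$; both sit at level $v_{\ellmahl}=-2$. So you cannot peel off maximal-depth contributions level by level. The paper's argument is quite different in flavour. One fixes a single $\tuple_{r}$ of maximal depth among those appearing in the putative relation, normalises $\val f_{r}=0$, and looks only at the exponents of $\xi_{\tuple_{r}}$ with all $k_{i}$ \emph{large}. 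A separate arithmetic lemma (Lemma~\ref{lem:unicite_exposants}) shows that, once $k_{1},\ldots,k_{t_{r}}\geq C$, the exponent $-\sum a_{r,i}/\ellmahl^{k_{1}+\cdots+k_{i}}$ cannot be written as $\gamma+$(an exponent of $\xi_{\tuple_{i}}$) for any $\gamma\in\frac{1}{d}\Z$ unless $\gamma=0$, $\a_{i}=\a_{r}$, and the $k$'s match. The coefficient of each such monomial in $\sum f_{i}\xi_{\tuple_{i}}$ is then a finite sum $\sum_{i:\a_{i}=\a_{r}}c_{i}\,k_{1}^{\alpha_{i,1}}\cdots k_{t}^{\alpha_{i,t}}\lambda_{i,1}^{k_{1}}\cdots\lambda_{i,t}^{k_{1}+\cdots+k_{t}}$, and its vanishing for all large $(k_{1},\ldots,k_{t})$ contradicts the linear independence of exponential-polynomial sequences (Schmidt's lemma, \cite[Lemma~2.2]{Schmidt03}). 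The key move you are missing is to restrict to the asymptotic regime of large $k_{i}$, where collisions disappear, rather than trying to control all exponents at once.
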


Moreover, the fact that a generalized $\ellmahl$-Mahler series satisfies the growth condition $\condpuisprime{\indC}$ introduced in Definition~\ref{defi:def Cr gms} can be inferred from its standard decomposition, as shown by the following result. 

\begin{prop}\label{prop:def Cr gms}
A generalized $\ellmahl$-Mahler series 
satisfies $\condpuisprime{\indC}$ if and only if the Puiseux series 
involved in its standard decomposition satisfy $\condprime{\indC}$.
\end{prop}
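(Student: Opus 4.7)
The plan is to treat the two implications separately. The ``if'' direction is immediate from Definition~\ref{defi:def Cr gms}: the standard decomposition is itself a decomposition of the form \eqref{eq:generalized_mahler_series_bis}, so if its Puiseux coefficients all satisfy $\condprime{\indC}$, then $f$ satisfies $\condpuisprime{\indC}$ by definition.

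For the ``only if'' direction, suppose $f$ satisfies $\condpuisprime{\indC}$, so that \emph{some} decomposition of $f$ of the form \eqref{eq:generalized_mahler_series_bis} has all its Puiseux coefficients $f_{c,j,\tuple}$ satisfying $\condprime{\indC}$. By the second assertion of Proposition~\ref{prop:gen p-mahl}, the Puiseux series appearing in the standard decomposition of $f$ are $\puisinfzero$-linear combinations of the $f_{c,j,\tuple}$. It therefore suffices to prove that, for each $\indC\in\{1,\ldots,5\}$, the set of Puiseux series satisfying $\condprime{\indC}$ is stable under sums and under multiplication by elements of $\puisinfzero$.

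Stability under addition follows immediately from the basic height estimate $H(\alpha+\beta)\le 2\,H(\alpha)H(\beta)$, which yields $\hgt((f+g)_\gamma)\le \hgt(f_\gamma)+\hgt(g_\gamma)+\log 2$. For stability under multiplication by $\puisinfzero$, by linearity it is enough to treat the case of a monomial $cz^{-\alpha}$ with $c\in\Qbar$ and $\alpha\in\QQ_{\geq 0}$: the $\gamma$-coefficient of $cz^{-\alpha}f$ is then $cf_{\gamma+\alpha}$, so $\hgt((cz^{-\alpha}f)_\gamma)\le \hgt(c)+\hgt(f_{\gamma+\alpha})$. Using $H(\gamma+\alpha)\le 2\,H(\alpha)H(\gamma)$, the argument reduces to checking that each of the five growth functions $\varphi\in\{1,\log\log x,\log x,\log^2 x, x\}$ entering in the conditions $\condprime{1}$--$\condprime{5}$ satisfies $\varphi(Cx)=O(\varphi(x))$ as $x\to\infty$ for any fixed $C>0$.

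I expect this last check to be the only real point of care; it is essentially a bookkeeping exercise, carried out case by case. For $\varphi(x)=x$ it is trivial, for $\varphi(x)=\log x$ one uses $\log(Cx)=\log C+\log x$, for $\varphi(x)=\log^2 x$ one uses $(a+b)^2\le 2a^2+2b^2$, for $\varphi(x)=\log\log x$ one uses $\log\log(Cx)=\log(\log C+\log x)\leq \log 2+\log\log x$ for $x$ large, and for $\varphi=1$ there is nothing to prove. Since for any $N\geq 1$ only finitely many $\gamma\in\QQ$ satisfy $H(\gamma)\leq N$, the $\mathcal O$-notation in the definition of $\condprime{\indC}$ permits us to discard these exceptional values and to work in the regime where $H(\gamma)$ is arbitrarily large, where each of the five bounds above applies. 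Combining the stability under sums and under monomial multiplication then yields the $\puisinfzero$-linear stability claimed, and hence the proposition.
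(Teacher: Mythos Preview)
Your proof is correct and follows essentially the same route as the paper's: both rely on the second assertion of Proposition~\ref{prop:gen p-mahl} and on the fact that the set of Puiseux series satisfying $\condprime{\indC}$ is a $\puisinfzero$-module. The paper simply asserts this module property without justification, whereas you supply the height estimates that verify it; your added detail is correct and welcome.
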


The previous two results are proved in the next two sections.

\subsection{Proof of Proposition~\ref{prop:gen p-mahl}}\label{sec:proof prop:gen p-mahl}
Proposition~\ref{prop:gen p-mahl} states that any generalized $\ellmahl$-Mahler series has a unique standard decomposition.
The uniqueness is proved in Section~\ref{sec:proof uniqueness decomp sol}, the existence is proved in Section~\ref{sec:exist std decomp}.

\subsubsection{Uniqueness}\label{sec:proof uniqueness decomp sol}
To prove the uniqueness of the standard decomposition in Proposition~\ref{prop:gen p-mahl}, it is sufficient to prove that the family 
 $$
 (\xi_{\tuple}e_c\logm^j)_{\tuple \in \alaNp,(c,j)  \in \cj}
 $$
 is $\puis$-linearly independent.   
Since, in virtue of  \cite[Lemma~30]{RoquesFrobForMahler}, the family 
 $
 (e_c\logm^j)_{(c,j) \in \cj}
 $
 is $\Hahn$-linearly independent, it is sufficient to prove that the family 
 $
 (\xi_{\tuple})_{\tuple \in \alaNp}
 $
 is $\puis$-linearly independent.  To prove this, we argue by contradiction: we assume that 
 $
 (\xi_{\tuple})_{\tuple \in \alaNp}
 $ 
 is $\puis$-linearly dependent, {\it i.e.}, that there exist a positive integer $r \in \Z_{\geq 1}$, pairwise distinct parameters $\tuple_1,\ldots,\tuple_r\in \alaNp$ and Puiseux series $f_1,\ldots,f_{r} \in \puis^{\times}$ such that 
 \begin{equation}\label{lem:combi lin}
	\sum_{i=1}^r f_i\xi_{\tuple_i}=0. 
\end{equation}		
	 For any $i \in \{1,\ldots,r\}$, we write $\tuple_i=(\balpha_i,\lambd_i,\a_i)$ with
	$\balpha_i=(\alpha_{i,1},\ldots,\alpha_{i,t_{i}})$, $\lambd_i=(\lambda_{i,1},\ldots,\lambda_{i,t_{i}})$ and $\a_i=(a_{i,1},\ldots,a_{i,t_i})$. 
	Up to renumbering, we may suppose that $t_1\leq t_2 \leq \cdots \leq t_r$. 
	
	If $t_r=0$, then $r=1$ and $\tuple_1=((),(),())$. So, \eqref{lem:combi lin} reduces to $f_{1}\xi_{\tuple_1}=f_{1}=0$, whence a contradiction.

	We now assume that $t_r\geq 1$. 
	Up to multiplying \eqref{lem:combi lin} by some power of $z$, we may suppose that the $z$-adic valuation of $f_r$ is $0$. Let $d \in \Z_{\geq 1}$ be such that the supports of $f_{1},\ldots,f_{r}$ are included in $\frac{1}{d}\Z$. It follows from Lemma~\ref{lem:unicite_exposants} below that there exists $C>0$ (for instance, $C=\max\{C_{\a_{r},\a_{1},d},\ldots,C_{\a_{r},\a_{r},d}\}$ works, where the constants $C_{\a_{r},\a_{1},d},\ldots,C_{\a_{r},\a_{r},d}$ are given by  Lemma~\ref{lem:unicite_exposants}) such that, for any $\gamma \in \frac{1}{d}\Z$, for any $i \in \{1,\ldots,r-1\}$, for any $k_{1},\ldots,k_{t_{r}},\ell_{1},\ldots,\ell_{t_{i}} \in \Z_{\geq 1}$ such that $k_{1},\ldots,k_{t_{r}} \geq C$, the equality
	$$
	\frac{a_{r,1}}{\ellmahl^{k_1}}+\cdots+ \frac{a_{r,t_r}}{\ellmahl^{k_1+\cdots+k_{t_r}}} =\gamma+\frac{a_{i,1}}{\ellmahl^{\ell_1}}+\cdots+ \frac{a_{i,t_i}}{\ellmahl^{\ell_1+\cdots+\ell_{t_i}}} 
	$$
holds if and only if $\gamma=0$, $t_{r}=t_{i}$, $\a_{r}=\a_{i}$ and $(k_{1},\ldots,k_{t_{r}})=(\ell_{1},\ldots,\ell_{t_{r}})$. 
This implies that, for any $k_1,\ldots,k_{t_r} \in \Z_{\geq C}$, the coefficient of
\begin{equation*}\label{eq:puiss z}
 z^{-\frac{a_{r,1}}{\ellmahl^{k_1}}-\cdots- \frac{a_{r,t_r}}{\ellmahl^{k_1+\cdots+k_{t_r}}}}
\end{equation*}
in $f_i\xi_{\tuple_i}$ is equal to 
\begin{itemize}
 \item $c_{i} k_1^{\alpha_{i,1}}\cdots k_{t_{r}}^{\alpha_{i,t_{r}}}\lambda_{i,1}^{k_1}\cdots\lambda_{i,t_{r}}^{k_1+\cdots+k_{t_{r}}}$ if $\a_i = \a_r$ where $c_i$ is the constant coefficient of $f_i$; 
 \item $0$ if $\a_i \neq \a_r$. 
\end{itemize}
In virtue of~\eqref{lem:combi lin}, the sum of these coefficients is equal to $0$, so, 
for any $k_1,\ldots,k_t \in \Z_{\geq C}$, we have
	\begin{equation}\label{eq:exp_poly_nul}
\sum_{i\in \I} c_i k_1^{\alpha_{i,1}}\cdots k_t^{\alpha_{i,t}}\lambda_{i,1}^{k_1}\cdots\lambda_{i,t}^{k_1+\cdots+k_t}=0
	\end{equation}
where $\I$ is the set of $i\in \{1,\ldots,r\}$ such that $\a_i=\a_r$ and where $t=t_{r}$ is the common value of the $t_{i}$ for $i \in \I$.
Since the $2t$-tuples $(\balpha_{i},\lambd_{i})$ are pairwise distinct when $i$ ranges over $\I$, it follows from \cite[Lemma~2.2]{Schmidt03} that the family  $$\left((k_1^{\alpha_{i,1}}\cdots k_t^{\alpha_{i,t}}\lambda_{i,1}^{k_1}\cdots\lambda_{i,t}^{k_1+\cdots+k_t})_{(k_{1},\ldots,k_{t})\in \Z^t}\right)_{i \in \I}$$ is $\Qbar$-linearly independent. Actually, a straightforward adaptation of the proof of \cite[Lemma~2.2]{Schmidt03} implies that the family $$\left((k_1^{\alpha_{i,1}}\cdots k_t^{\alpha_{i,t}}\lambda_{i,1}^{k_1}\cdots\lambda_{i,t}^{k_1+\cdots+k_t})_{(k_{1},\ldots,k_{t})\in \Z_{\geq C}^t}\right)_{i \in \I}$$ is $\Qbar$-linearly independent. This contradicts \eqref{eq:exp_poly_nul} because $c_r\neq 0$.

To complete the proof, it only remains to prove the following result.
\begin{lem}\label{lem:unicite_exposants}
  For all $s,t \in \Z_{\geq 0}$ such that $s \geq t \geq 0$, for all $\a=(a_1,\ldots,a_s) \in \N_{(\ellmahl)}^{s}$, for all $\b=(b_1,\ldots,b_t) \in \N_{(\ellmahl)}^{t}$, for all $d \in \Z_{\geq 1}$, there exists $C_{\a,\b,d}>0$ such that, for all $\gamma \in \frac{1}{d} \ZZ$, for all $k_1,\ldots,k_s,\ell_1,\ldots,\ell_t\in \Z_{\geq 1} $ such that $k_1,\ldots,k_s\geq C_{\a,\b,d}$, the following properties are equivalent:
\begin{enumerate}
 \item we have 
 \begin{equation}\label{eq:egalite_puissances_Chi}
	\frac{a_1}{p^{k_1}} + \frac{a_2}{p^{k_1+k_2}}  + \cdots + \frac{a_s}{p^{k_1+k_2+\cdots+k_s}} = \gamma+ \frac{b_1}{p^{\ell_1}} + \frac{b_2}{p^{\ell_1+\ell_2}}  + \cdots + \frac{b_t}{p^{\ell_1+\ell_2+\cdots+\ell_t}}; 
\end{equation}
 \item we have $\gamma=0$, $s=t$ and, for all  $i \in \{1, \ldots, t\}$,  $a_i=b_i$ and $k_i=\ell_i$. 
\end{enumerate}
\end{lem}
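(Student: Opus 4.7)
The direction $(2) \Rightarrow (1)$ is immediate. I would prove $(1) \Rightarrow (2)$ by induction on $s$. The base case $s = 0$ forces $t = 0$ (by the assumption $s \geq t$) and the equation \eqref{eq:egalite_puissances_Chi} reduces to $0 = \gamma$, so $\gamma = 0$, and the conclusion holds vacuously.

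For the inductive step, the key tool is a $p$-adic valuation analysis, where $p$ is a well-chosen prime dividing $\ellmahl$. Setting $e = v_p(\ellmahl) \geq 1$, the condition $a_i, b_j \in \N_{(\ellmahl)}$ guarantees that $v_p(a_i), v_p(b_j) \in \Z_{\geq 0}$, bounded above by a constant depending only on $\a, \b$. Choosing $C_{\a,\b,d}$ sufficiently large, the valuations $v_p(a_i/\ellmahl^{K_i}) = v_p(a_i) - eK_i$ with $K_i = k_1 + \cdots + k_i$ become strictly decreasing in $i$, so $v_p(\text{LHS}) = v_p(a_s) - eN$ with $N = K_s$ is attained uniquely at $i = s$. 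A parallel analysis on the right-hand side—using that the $\gamma$-contribution, of valuation $\geq -v_p(d)$, is negligible once $C_{\a,\b,d}$ exceeds $v_p(d)/e$—identifies the leading valuation on the right as $v_p(b_t) - eL$ where $L = \ell_1 + \cdots + \ell_t$. Equating yields $eL - eN = v_p(b_t) - v_p(a_s)$, so $|L - N|$ is bounded by a constant depending only on $\a, \b$.

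One then upgrades this equality of valuations to equality of the leading terms themselves. Rearranging the equation as $a_s\ellmahl^{L-N} - b_t = \ellmahl^L \cdot (\text{sub-leading remainder})$, a careful $v_p$-bound on the right-hand side shows that any nonzero $a_s\ellmahl^{L-N} - b_t$ would require $v_p \geq eC_{\a,\b,d} + O(1)$, which is impossible for $C_{\a,\b,d}$ larger than a bound depending only on $\a, \b$. Hence $a_s\ellmahl^{L-N} = b_t$; since $a_s, b_t \in \N_{(\ellmahl)}$ and a direct computation shows that no nontrivial integer power of $\ellmahl$ can be absorbed between two elements of $\N_{(\ellmahl)}$ (without forcing one of them out of $\N_{(\ellmahl)}$), this forces $L = N$ and $a_s = b_t$. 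Subtracting these matched leading terms yields an equation of the same form with $s - 1$ and $t - 1$ in place of $s$ and $t$, to which the inductive hypothesis applies with the same $d$; this delivers $\gamma = 0$, $s - 1 = t - 1$, $a_i = b_i$ and $k_i = \ell_i$ for $i \leq s - 1$, and the remaining identity $k_s = \ell_t$ follows from $K_s = L_t$ and $K_{s-1} = L_{t-1}$.

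The main obstacle I anticipate is the uniform choice of the prime $p \mid \ellmahl$ so that the strict decrease of $v_p$-valuations holds simultaneously for \emph{all} the indices on both sides, not merely for the leading ones; for composite $\ellmahl$ this may require working with several primes dividing $\ellmahl$ simultaneously, since the condition ``$\ellmahl \nmid \num(a_i)$'' only guarantees the existence of a ``witness'' prime for each $a_i$ individually. A secondary subtlety is that the $\ell_j$'s, unlike the $k_i$'s, are not assumed to be bounded below, so one must carefully verify that subleading terms on the right-hand side cannot conspire to match the leading order.
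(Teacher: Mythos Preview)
Your overall strategy—identify and match the ``deepest'' terms on each side, subtract, and induct—is the paper's strategy too. But your execution has a genuine gap, and it is \emph{not} primarily the composite-$\ellmahl$ issue you flag.

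The step that fails is the ``upgrade'' from equality of valuations to $a_s\ellmahl^{L-N}=b_t$. After multiplying through by $\ellmahl^{L}$ (taking $L=N$, say), the subleading remainder on the right still contains the term $\ellmahl^{\ell_t}b_{t-1}$, and since only $\ell_t\ge 1$ is known, this term has $v_p$ merely $\ge e$, not $\ge eC_{\a,\b,d}$. So all you can conclude is $v_p(a_s-b_t)\ge e$, which for fixed $a_s\ne b_t$ is a finite condition, not a contradiction for large $C_{\a,\b,d}$. This is exactly your ``secondary subtlety'' about the $\ell_j$'s not being large, and it is fatal to the argument as written, even for prime $\ellmahl$.

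The paper resolves this with a case split and a different induction. It inducts on $s+t$ (not on $s$), and distinguishes two cases. If $a_s=b_t$, one first shows $N=L$ by observing that each side of the equation (with $\gamma$ moved left) lies in $\frac{1}{\ellmahl^{N}}\N_{(\ellmahl)}$ resp.\ $\frac{1}{\ellmahl^{L}}\N_{(\ellmahl)}$, and these sets are disjoint for $N\ne L$; then one cancels the matched last terms and reduces to $(s-1,t-1)$. If $a_s\ne b_t$, one again gets $N=L$, multiplies through by $\ellmahl^{N}$, subtracts $b_t$, and uses that the left side now has a term $a_s-b_t$ of bounded $\ellmahl$-divisibility while everything else is highly $\ellmahl$-divisible; this forces $\ell_t$ to be bounded by a constant depending only on $\a,\b$, so one can absorb $b_t$ into $b_{t-1}$ (replacing the last entry by $\ellmahl^{\ell_t}b_{t-1}+b_t\in\N_{(\ellmahl)}$) and reduce to $(s,t-1)$, yielding a contradiction by induction.

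Two further remarks. First, the paper works throughout with divisibility by the integer $\ellmahl$ itself (the macro $p$ in the paper \emph{is} $\ellmahl$), not by a prime factor; this is exactly what the definition of $\N_{(\ellmahl)}$ is designed for and makes the composite-$\ellmahl$ worry disappear. Second, the case $t=0$ is handled separately by a size (not valuation) argument: for $k_i$ large the left side lies in $(0,\tfrac1d)$ while the right side lies in $\tfrac1d\Z$, forcing both to vanish.
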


\begin{proof} 
We first consider the case $t=0$.	
Let $C_{\a,d} \in \Z_{\geq 1}$ be such that 
$$
\frac{a_{1}+\cdots+a_{s}}{p^{C_{\a,d}}} <\frac{1}{d}.
$$
Consider $k_1,\ldots,k_s\in \Z_{\geq 1} $ such that $k_1,\ldots,k_s\geq C_{\a,d}$ and satisfying \eqref{eq:egalite_puissances_Chi}. The left-hand side of \eqref{eq:egalite_puissances_Chi} belongs to $[0,\frac{1}{d})$ and its right-hand side belongs to $\frac{1}{d}\Z$. Therefore, both sides of \eqref{eq:egalite_puissances_Chi} belong to $[0,\frac{1}{d}) \cap \frac{1}{d}\Z=\{0\}$, whence $s=0$ and $\gamma=0$. This concludes the proof in the case $t=0$. 
\vskip 5pt
Let us now turn to the proof of the result in the general case. We argue by induction on $n=s+t$. 

\vskip 5 pt
\noindent\textbf{Base case: $n=0$ or $n=1$.} The result holds when $n=0$ or $n=1$ for, in that case, we have $t=0$. 

\vskip 5 pt
\noindent\textbf{Inductive step.}
Consider an integer $n \geq 2$ and assume that the result is true for all $s\geq t \geq 0$ such that $s+t \leq n-1$. Consider $s\geq t \geq 0$ such that $s+t = n$ and let us prove that the result is true for these $s,t$. We already know that the result is true if $t=0$, so we can and will assume that $t \geq 1$. We distinguish two cases: either $a_{s} = b_{t}$ or $a_{s} \neq b_{t}$. 

\vskip 5pt
{\it Case $a_{s} = b_{t}$.} Assume that $a_s=b_t$. Let $D \in \Z_{\geq 1}$ be such that $\ellmahl$ is coprime with the denominator of $p^{D}\frac{1}{d}$.  Set $\widecheck{\a}=(a_{1},\ldots,a_{s-1})$ and $\widecheck{\b}=(b_{1},\ldots,b_{t-1})$. Consider the constant $C_{\widecheck{\a},\widecheck{\b},d}$ given by the induction hypothesis. Set $C_{\a,\b,d}=\max \{D+1,C_{\widecheck{\a},\widecheck{\b},d}\}$. 

Consider $\gamma \in \frac{1}{d} \ZZ$ and $k_1,\ldots,k_s,\ell_1,\ldots,\ell_t\in \Z_{\geq 1}$ such that $k_1,\ldots,k_s\geq C_{\a,\b,d}$ and satisfying \eqref{eq:egalite_puissances_Chi}. 
We rewrite the equality \eqref{eq:egalite_puissances_Chi} as follows
\begin{equation}\label{eq:egalite_puissances_Chi bis}
	\frac{a_1}{p^{k_1}} + \frac{a_2}{p^{k_1+k_2}}  + \cdots + \frac{a_s}{p^{k_1+k_2+\cdots+k_s}} -\gamma=  \frac{b_1}{p^{\ell_1}} + \frac{b_2}{p^{\ell_1+\ell_2}}  + \cdots + \frac{b_t}{p^{\ell_1+\ell_2+\cdots+\ell_t}}.
\end{equation}
The left-hand side of \eqref{eq:egalite_puissances_Chi bis} belongs to $\frac{1}{p^{k_1+k_2+\cdots+k_s}} \N_{(\ellmahl)}$ whereas its right-hand side belongs to $\frac{1}{p^{\ell_1+\ell_2+\cdots+\ell_t}} \N_{(\ellmahl)}$. 
So, we have 
\begin{equation}\label{eq:k_i ell_i}
k_1+k_2+\cdots+k_s = \ell_1+\ell_2+\cdots+\ell_t
\end{equation}
and 
\eqref{eq:egalite_puissances_Chi} can be rewritten as follows: 
$$
\frac{a_1}{p^{k_1}} + \frac{a_2}{p^{k_1+k_2}}  + \cdots + \frac{a_{s-1}}{p^{k_1+k_2+\cdots+k_{s-1}}} = \gamma+ \frac{b_1}{p^{\ell_1}} + \frac{b_2}{p^{\ell_1+\ell_2}}  + \cdots + \frac{b_{t-1}}{p^{\ell_1+\ell_2+\cdots+\ell_{t-1}}}.
$$
Since $k_1,\ldots,k_{s-1} \geq C_{\widecheck{\a},\widecheck{\b},d}$, we have $\gamma=0$, $s-1=t-1$, $a_{i}=b_{i}$ and $k_{i}=\ell_{i}$ for all $i \in \{1, \ldots, t-1\}$. 
It follows from \eqref{eq:k_i ell_i} that $k_{s}=\ell_{s}$ as well. This concludes the induction step in this case. 

\vskip 5pt
{\it Case $a_{s} \neq b_{t}$.} Assume now that $a_s \neq b_t$. Let $L\in \Z_{\geq 0}$ be such that $p^{L}$ is the greatest power of $p$ dividing the numerator of $a_{s}-b_{t}$.  Let $D \in \Z_{\geq 1}$ be such that the numerators of $p^{D}\frac{1}{d}$, $p^{D}a_{1},\ldots,p^{D}a_{s-1}$ are divisible by $p^{L}$.  If $t=1$, we set $C_{\a,\b,d}=D+1$. If $t \geq 2$, we set $C_{\a,\b,d}=\max \{D+1,C_{\a,\b_{0},d},\ldots,C_{\a,\b_{L},d}\}$ where $\a=(a_{1},\ldots,a_{s})$,  
$\b_{\ell}=(b_{1},\ldots,b_{t-2},p^{\ell}b_{t-1}+b_{t})$ and $C_{\a,\b_{\ell},d}$ is the constant given by the induction hypothesis.

We claim that there is no $\gamma \in \frac{1}{d} \ZZ$ and $k_1,\ldots,k_s,\ell_1,\ldots,\ell_t\in \Z_{\geq 0}$ such that $k_1,\ldots,k_s\geq C_{\a,\b,d}$ satisfying \eqref{eq:egalite_puissances_Chi}. We argue by contradiction: we assume that such $\gamma,k_1,\ldots,k_s,\ell_1,\ldots,\ell_t$ exist. Arguing as in the case $a_{s}=b_{t}$ treated above, we see that 
$$
k_1+k_2+\cdots+k_s = \ell_1+\ell_2+\cdots+\ell_t.
$$
Multiplying both sides of \eqref{eq:egalite_puissances_Chi} by $\ellmahl^{k_1+\cdots+k_s}=\ellmahl^{\ell_1+\cdots+\ell_{t} }$ and substracting $b_t$, we obtain
\begin{equation}\label{eq:egality_num bis}
(a_s-b_{t})+\ellmahl^{k_2+\cdots+k_s}a_1 + \cdots + \ellmahl^{k_s}a_{s-1}-\ellmahl^{k_1+\cdots+k_s}\gamma=\ellmahl^{\ell_2+\cdots+\ell_t}b_1 +  \cdots +  \ellmahl^{\ell_t}b_{t-1}.
\end{equation}
Note that our choice of $C_{\a,\b,d}$ guarantees that $p^{L+1}$ divides the numerator of $\ellmahl^{k_2+\cdots+k_s}a_1 + \cdots + \ellmahl^{k_s}a_{s-1}-\ellmahl^{k_1+\cdots+k_s}\gamma $ but not the numerator of $a_{s}-b_{t}$. 
If $t=1$, this immediately leads to a contradiction with \eqref{eq:egality_num bis} which, in this case, reduces to 
\begin{equation*}\label{eq:egality_num bis bis}
0=(a_s-b_{1})+\ellmahl^{k_2+\cdots+k_s}a_1 + \cdots + \ellmahl^{k_s}a_{s-1}-\ellmahl^{k_1+\cdots+k_s}\gamma.
\end{equation*}
When $t \geq 2$, this implies that $\ell_{t} \in \{0,\ldots,L\}$. 
Rewriting \eqref{eq:egalite_puissances_Chi} as follows
$$
\frac{a_1}{p^{k_1}}
  + \cdots + \frac{a_s}{p^{k_1+k_2+\cdots+k_s}} = \gamma+ \frac{b_1}{p^{\ell_1}} 
   + \cdots +\frac{b_{t-2}}{p^{\ell_1+\ell_2+\cdots+\ell_{t-2}}}+ \frac{p^{\ell_{t}}b_{t-1}+b_t}{p^{\ell_1+\ell_2+\cdots+\ell_t}}
$$
and using the fact that $k_1,\ldots,k_s\geq C_{\a,\b_{\ell_{t}},d}$, we get that $s=t-1$, which is absurd because $t \leq s$. 
\end{proof}

\subsubsection{Existence}\label{sec:exist std decomp}  For any integer $s\geq 0$, we consider the $\Q$-vector space
\begin{align*}
\mathcal W_s=\operatorname{Span}_{\Qbar}\left\{z^{-\gamma}\xi_{\tuple} \ \vert \ \gamma \in \QQ_{\geq 0},\,
\tuple \in  \bigcup_{t=0}^{s}\alaNpt{t}
\right\}.
\end{align*}
To prove the existence of the standard decomposition stated in Proposition~\ref{prop:gen p-mahl} and the last assertion of this proposition, it is clearly sufficient to prove that, for any $\tuple=(\balpha,\lambd,\a)\in \ala$, we have $\xi_\tuple \in \mathcal W_s$ where $s \geq 0$ is the unique integer such that $\tuple\in \alat{s}$. Let us prove this by well-founded induction on $\balpha$ with respect to the partial well-order~$\prec$ on $\bigcup_{t \geq 0} \Z_{\geq 0}^{t}$ introduced in Section~\ref{sec: the xi are p mahler}. 
So, we consider an arbitrary $\tuple=(\balpha,\lambd,\a) \in \ala$ and we suppose that, for any $\tuple'=(\balpha',\lambd',\a') \in \ala$ such that $\balpha' \prec \balpha$, we have $\xi_{\tuple'} \in \mathcal W_{s'}$ where $s' \geq 0$ is the unique integer such that $\tuple' \in \alat{s'}$. If $\tuple=((),(),()) \in \alat{0}$, then $\xi_\tuple=1$ clearly belongs to $\mathcal W_{0}$, as desired. 
So, we can assume that $\tuple \neq ((),(),())$. 
Let $s \geq 1$ be the unique integer such that $\tuple \in \alat{s}$. Recall from \eqref{eq:def_xi_summable} that
\begin{equation}\label{eq:def_xi_summable_bis}
\xi_{\tuple} = \sum_{k\geq 1}k^{\alpha_1}(\lambda_1\cdots\lambda_t)^k \malop{\ellmahl}^{-k}(z^{-a_1}\xi_{\tuplespt})
\end{equation}
where $\tuplespt=(\balphaspt,\lambdspt,\aspt) \in \alat{s-1}$ denotes the tuple obtained from $\tuple=(\balpha, \lambd, \a)$ by removing the first coordinate of $\balpha$, $\lambd$ and $\a$ as in Section~\ref{sec: xi can be def induct}.
Since $\balphaspt \prec \balpha$, the induction hypothesis ensures that $\xi_{\tuplespt} \in \mathcal W_{s-1}$. So, by linearity, to prove that $\xi_{\tuple}$ belongs to $\mathcal W_{s}$, it is sufficient to prove that, for any $\alpha \in \Z_{\geq 0}$, any $\lambda \in \Qbar^\times$, any $\gamma \in \QQ_{>0}$ and any $\tuple' \in  \alaNpt{s'}$ with $s' \in \{0,\ldots,s-1\}$, 
\begin{equation}\label{eq:expression_sum_Mahler}
\sum_{k\geq 1}k^{\alpha}\lambda^k \malop{\ellmahl}^{-k}(z^{-\gamma}\xi_{\tuple'})
\end{equation}
belongs to $\mathcal W_s$. To prove this, we consider $\eta \in \N_{(\ellmahl)}$ and $u\in \Z$ such that $\gamma = \eta\ellmahl^u$ and we distinguish three cases.

\vskip 5pt
\textit{Case $u=0$.} In this case, we have $\gamma \in \N_{(\ellmahl)}$ and it follows from \eqref{eq:def_xi_summable_bis} that~\eqref{eq:expression_sum_Mahler} is equal to $\xi_{\tuple''}$ for some $\tuple'' \in \alaNpt{s'+1}$ and, hence, belongs to $\mathcal W_s$.

\vskip 5pt
\textit{Case $u>0$.}  Splitting the sum into the parts with $k\leq u$ and $k>u$, \eqref{eq:expression_sum_Mahler} may be rewritten as
\begin{equation}\label{eq:expression_sum_Mahler2}
\sum_{k=1}^uk^{\alpha}\lambda^k z^{-\eta \ellmahl^{u-k}} \xi_{\tuple'}(z^{\ellmahl^{-k}}) + \sum_{k\geq 1}(k+u)^{\alpha}\lambda^{k+u} \malop{\ellmahl}^{-k}(z^{-\eta}\xi_{\tuple'}(z^{\ellmahl^{-u}})).
\end{equation}
Using Lemma~\ref{lem:phichi-lamddchi}, we see that each $\xi_{\tuple'}(z^{\ellmahl^{-k}})$ is a $\Qbar$-linear combination of certain $\xi_{\tuple''}(z)$ with $\tuple'' \in \alaNpt{s'}$ and of certain $z^{-\theta}\xi_{\tuple''}(z)$ with $\tuple'' \in \alaNpt{s''}$ for some $s'' \in \{0,\ldots,s'-1\}$ and $\theta \in \QQ_{>0}$.
This immediately implies that the first sum of \eqref{eq:expression_sum_Mahler2} belongs to $\mathcal W_s$.  
Moreover, it also implies that the second sum of  \eqref{eq:expression_sum_Mahler2} is a $\Qbar$-linear combination of terms of one of the following forms
\begin{align}
\label{eq:sommation_part1} \sum_{k\geq 1}k^j\lambda^{k} \malop{\ellmahl}^{-k}(z^{-\eta}\xi_{\tuple''}(z))&\text{ with } \tuple'' \in \alaNpt{s'},\,j \in \Z_{\geq 0},\, \text{ or,}
\\
\label{eq:sommation_part2} \sum_{k\geq 1}k^j\lambda^{k} \malop{\ellmahl}^{-k}(z^{-\varphi}\xi_{\tuple''}(z))& \text{ with } \tuple'' \in \alaNpt{s''},\,s''<s',\,j \in \Z_{\geq 0}, \varphi\in \QQ_{<0}\,.
\end{align}
Using \eqref{eq:def_xi_summable_bis} and the fact that $\eta \in \N_{(\ellmahl)}$, we see that \eqref{eq:sommation_part1} is equal to $\xi_{\tuple'''}$ for some $\tuple''' \in \alaNpt{s'+1}$ and, hence, belongs to $\mathcal W_s$. Moreover, it follows from \eqref{eq:def_xi_summable_bis} that \eqref{eq:sommation_part2} is equal to $\xi_{\tuple'''}$ for some $\tuple'''\in \alat{s''+1}$. Since $s''+1\leq s'<s$, we have $\balpha'''\prec \balpha$ and our induction hypothesis guarantees that \eqref{eq:sommation_part2} belongs to~$\mathcal W_s$. Thus \eqref{eq:expression_sum_Mahler2} and, hence, \eqref{eq:expression_sum_Mahler} belong to $\mathcal W_s$.

\vskip 5pt
\textit{Case $u<0$.} In this case, setting $v=-u>0$, we may rewrite \eqref{eq:expression_sum_Mahler} as
$$
\sum_{k\geq 1}(k-v)^{\alpha}\lambda^{k-v} \malop{\ellmahl}^{-k}(z^{-\eta}\xi_{\tuple'}(z^{\ellmahl^{v}})) -\sum_{k=1}^v(k-v)^{\alpha}\lambda^{k-v} z^{-\eta \ellmahl^{-k}} \xi_{\tuple'}(z^{\ellmahl^{v-k}}). 
$$
A reasoning similar to that used in the case $u>0$ shows that it belongs to~$\mathcal W_s$. This concludes the inductive step and thus establishes the existence of a standard decomposition.

\subsection{Proof of Proposition~\ref{prop:def Cr gms}}\label{sec:proof_Cr_gms}

Only one of the implications requires a justification, namely that if a generalized $\ellmahl$-Mahler series $f$ satisfies $\condpuisprime{\indC}$, 
then the Puiseux series involved in its standard decomposition satisfy $\condprime{\indC}$ as well. So, we consider a decomposition of $f$ of the form \eqref{eq:generalized_mahler_series_bis} such that all the Puiseux series $f_{c,j,\tuple}$ satisfy $\condprime{\indC}$. Proposition~\ref{prop:gen p-mahl} ensures that the Puiseux series involved in the standard decomposition of $f$ are $\puisinfzero$-linear combinations of the $f_{c,j,\tuple}$. Since the set of $\ellmahl$-Mahler Puiseux series satisfying $\condprime{\indC}$ is a $\puisinfzero$-module, we get that the Puiseux series involved in the standard decomposition of $f$ satisfy $\condprime{\indC}$ as well. 

\section{Sketch of the proof of Theorem~\ref{thm:pureté1}}\label{sec:sketch proof purity theo}

In this section, we provide a sketch of the proof of Theorem~\ref{thm:pureté1}, which is presented in full detail in Section~\ref{sec:proof purity theo}. 

We do not elaborate on the case $r=1$ of Theorem~\ref{thm:pureté1}, as it follows directly from Theorem~\ref{thm:decomp_chi} and Theorem~\ref{thm: hgt}. 

To prove Theorem~\ref{thm:pureté1} when $r=2$ (resp.~$r=3$), we will first consider the case when
\begin{equation*}
f
=
\sum_{(c,j) \in \cj}\sum_{\tuple\in \ala}f_{c,j,\tuple}\xi_{\tuple}e_c \logm^{j}  
\end{equation*}
where the $f_{c,j,\tuple}$ belong to $\Qbar((z))$ and the $\tuple=(\balpha,\lambd,\a)$ appearing in the support of the above sum are such that $\a$ has entries in $\Z_{> 0}$. To address this particular case, we will proceed as follows:
\begin{enumerate}[label=(\roman*)]
 \item \label{step 1 proof purity cas 1} we will first prove that $f$ satisfies a $\ellmahl$-Mahler equation of the form~\eqref{eq: mahler1} with coefficients $a_{0},\ldots,a_{d} \in \Qbar[z]$ such that the nonzero roots of $a_{0}$ belong to the set $\mathcal U$  of complex roots of unity (resp.~to the set $\mathcal U_\ellmahl$ of complex roots of unity whose order is not coprime with $\ellmahl$); 
 \item \label{step 2 proof purity cas 1} we will then prove that such an equation has a full basis of generalized $\ellmahl$-Mahler series solutions satisfying the growth condition $\condpuisprime{2}$ (resp.~$\condpuisprime{3}$).
\end{enumerate}
To establish Theorem~\ref{thm:pureté1} for \( r \in \{2, 3\} \) in the general case, 
we will apply the special case above to \( f(z^{\mu}) \), for a suitably chosen \( \mu \in \mathbb{Z}_{\geq 1} \).

The justifications of properties~\ref{step 1 proof purity cas 1} and \ref{step 2 proof purity cas 1} above will rely on the results established in Section~\ref{sec:mahler denom} regarding $\ellmahl$-Mahler denominators.  
More specifically: 
\begin{itemize}
 \item we will derive \ref{step 1 proof purity cas 1} as a direct application of Proposition~\ref{prop growth denom}, which is the main result of Section~\ref{sec: ext only if part}; 
 \item we will derive \ref{step 2 proof purity cas 1} as a direct application of Proposition~\ref{theo avec a0}, which is the main result of  Section~\ref{sec: ext if part}. 
\end{itemize}

\section{$\ellmahl$-Mahler denominators}\label{sec:mahler denom}

\subsection{Reminders on the $\ellmahl$-Mahler denominator of $\ellmahl$-Mahler Laurent series}\label{sec: mahler den laurent series}

Theorem~\ref{thm: hgt} ensures that any $\ellmahl$-Mahler Laurent series $f \in \Qbar((z))$ satisfies the growth condition $\condprime{1}$. According to \cite{BorisBellSmertnigGap}, one can determine whether or not it satisfies one of the stronger conditions $\condprime{2}$ or $\condprime{3}$ by looking at its $\ellmahl$-Mahler denominator. Let us briefly recall this.

\begin{defi}
 The $\ellmahl$-Mahler denominator $\denommahl{f}$ of a $\ellmahl$-Mahler Laurent series $f \in \Q((z))$ is the monic generator of the ideal of $\Qbar[z]$ given by 
\begin{equation*}\label{ideal p-mahler}
 \left\{P \in \Qbar[z] \ \big\vert  \ Pf \in \sum_{i=1}^{d} \Q[z]\malop{\ellmahl}^i(f) \text{ for some } d \in \Z_{\geq 1} \right\}.
\end{equation*}
\end{defi}

\begin{rem}\label{rem: coeff Kinf vs ratio}
1) In \cite{BorisBellSmertnigGap}, the previous definition is formulated for $\ellmahl$-Mahler series $f \in \Qbar[[z]]$. Its extension to $\Qbar((z))$ is straightforward.  

2) The $\ellmahl$-Mahler equations considered in \cite{BorisBellSmertnigGap} have coefficients in $\Q(z)$, whereas the equations considered in the present paper have coefficients in the bigger field $\Kinf$. Note that $f \in \Q((z))$ satisfies a $\ellmahl$-Mahler equation with coefficients in $\Q(z)$ if and only if it satisfies a $\ellmahl$-Mahler equation with coefficients in $\Kinf$. 
Indeed, assume  that $f \in \Q((z))$ satisfies a  $\ellmahl$-Mahler equation of the form \eqref{eq: mahler1} with $a_0,\ldots,a_d \in \Qbar(z^{\frac{1}{m}})$ for some $m \in \Z_{\geq 1}$. Without loss of generality, we can assume that $a_{0}=1$. Let $G$ be the Galois group of the finite Galois extension $\Qbar((z^{\frac{1}{m}}))$ of $\Qbar((z))$. Then, $f$ satisfies 
 $$
 b_0 f + b_1 \malop{\ellmahl}(f)+ \cdots + b_d \malop{\ellmahl}^d(f)=0
 $$
 with $b_{i}=\frac{1}{\vert G\vert}\sum_{\sigma \in G} \sigma(a_{i}) \in \Qbar(z)$ and this equation is non trivial because $b_{0}=1$. 
\end{rem}

In what follows, we denote by $\mathcal{U}$ the set of complex roots of unity and by $\mathcal{U}_{\ellmahl}$ the set of complex roots of unity whose order is not coprime with $\ellmahl$. We recall the following result.

\begin{theo}[{\cite[Theorems~6.1 \& 7.1]{BorisBellSmertnigGap}}] \label{theo abs 7.1}
	Consider a $\ellmahl$-Mahler Laurent series $f\in \Qbar((z))$. We have:
	\begin{itemize}
		\item $f$ satisfies $\condprime{2}$ 
		if and only if every nonzero root of the $\ellmahl$-Mahler denominator of $f$ belongs to $\mathcal{U}$;
		\item $f$ satisfies $\condprime{3}$ 
		if and only if every nonzero root of the $\ellmahl$-Mahler denominator of $f$ belongs to $\mathcal{U}_{\ellmahl}$.
	\end{itemize}
\end{theo}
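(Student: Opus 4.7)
My plan rests on analyzing how the defining relation of the Mahler denominator,
\[
\denommahl{f}(z)\, f(z) = \sum_{i=1}^{d} b_i(z)\, f(z^{\ellmahl^i}) \quad \text{for some } b_i \in \Qbar[z],
\]
forces recursive arithmetic structure on the coefficients $(f_n)_{n \in \Z}$. Extracting the coefficient of $z^n$ on both sides gives, for $n$ sufficiently large, an expression of $f_n$ as a $\Qbar$-linear combination of finitely many $f_m$ with $m \leq n/\ellmahl$, with a ``denominator factor'' built from the values of $\denommahl{f}$ at certain $\ellmahl$-power-th roots of an $n$-dependent quantity. Iterating this recursion $\mathcal{O}(\log_{\ellmahl} n)$ times, I would reduce the computation of $f_n$ to bounded initial data, at the cost of accumulating denominators coming from nonzero roots of $\denommahl{f}$.

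For the ``if'' direction of (2), assuming all nonzero roots of $\denommahl{f}$ lie in $\mathcal{U}$, each such root has vanishing Weil height. The denominator contribution at each iteration step is then controlled by the arithmetic of cyclotomic integers, contributing at most $\mathcal{O}(\log n)$ to $\hgt(f_n)$ per step. Accumulating over $\mathcal{O}(\log n)$ iterations yields $\hgt(f_n) \in \mathcal{O}(\log^2 H(n))$, which is precisely $\condprime{2}$. For the ``if'' direction of (3), the stronger hypothesis that the nonzero roots lie in $\mathcal{U}_{\ellmahl}$ means that iterating $z \mapsto z^{\ellmahl}$ eventually collapses each such $\alpha$ to $1$ at the $\ellmahl$-adic places of interest; one consequently controls the denominators uniformly after finitely many iterations, reducing the per-step height cost to a bounded constant, which gives $\hgt(f_n) \in \mathcal{O}(\log H(n))$, i.e.\ $\condprime{3}$.

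For the converse ``only if'' directions, I would argue by contraposition. If $\denommahl{f}$ admits a nonzero root $\alpha \notin \mathcal{U}$ (resp.\ $\alpha \notin \mathcal{U}_{\ellmahl}$), then by minimality of $\denommahl{f}$ this root cannot be removed from any Mahler relation satisfied by $f$. The product formula then provides a place $v$ at which $\alpha$ contributes a strictly positive local height (resp.\ a positive contribution from a place coprime with $\ellmahl$), and propagating this local obstruction through the Mahler orbit $\{\alpha^{\ellmahl^k}\}_{k \geq 0}$ forces $\hgt(f_n)$ to grow too fast to satisfy $\condprime{2}$ (resp.\ $\condprime{3}$) along a suitable subsequence of indices. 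The principal obstacle will be the adelic bookkeeping: uniform control of archimedean and non-archimedean contributions through the iteration, and, for the converse direction, a non-cancellation argument guaranteeing that the contribution of $\alpha$ genuinely survives in $f_n$ despite the many superposed recursion branches—a point where the height gap theorem and the minimality of $\denommahl{f}$ must be used crucially.
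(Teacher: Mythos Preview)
The paper does not prove this theorem: it is quoted as a black box from \cite{BorisBellSmertnigGap}, and the only argument the paper supplies is the short Remark following the statement, which reduces the Laurent-series case to the power-series case already treated in \cite{BorisBellSmertnigGap} by multiplying $f$ by a suitable power of $z$ (noting that this operation preserves both $\condprime{r}$ and the nonzero roots of the Mahler denominator). Your proposal is therefore not a variant of the paper's proof but an attempt to reprove the Adamczewski--Bell--Smertnig result from scratch.

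As a sketch of that external result, your ``if'' direction is in the right spirit---iterating the defining relation of $\denommahl{f}$ over $O(\log_{\ellmahl} n)$ steps and bounding the height contribution of cyclotomic denominators---but the per-step estimates you describe are not justified, and the claimed $O(\log n)$ versus $O(1)$ dichotomy between the $\mathcal U$ and $\mathcal U_{\ellmahl}$ cases needs a precise statement about how the polynomials $\denommahl{f}(z^{\ellmahl^j})$ interact at integer arguments. The ``only if'' direction is where the real difficulty lies, and your outline does not address it: the minimality of $\denommahl{f}$ by itself does not prevent the contribution of a bad root $\alpha$ from being cancelled in the recursion for individual coefficients $f_n$, and turning ``minimality'' into a genuine non-cancellation statement along an explicit subsequence is precisely the substantial work carried out in \cite{BorisBellSmertnigGap}. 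For the purposes of the present paper, you should simply cite the result and supply the Laurent-to-power-series reduction as in the Remark.
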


\begin{rem}
In \cite{BorisBellSmertnigGap}, the previous result is stated for any $\ellmahl$-Mahler series $f \in \Q[[z]]$. 
Theorem~\ref{theo abs 7.1} follows from \cite{BorisBellSmertnigGap} by using the following remark. 
Consider $g=z^{\nu} f$ with $\nu \in \Z_{\geq 0}$ large enough so that $g \in \Q[[z]]$. Then, $f$ satisfies $\condprime{2}$ (resp.~$\condprime{3}$) if and only if $g$ satisfies the same property. Moreover, the $\ellmahl$-Mahler denominators of $f$ and $g$ differ
by multiplication by a power of $z$, so every nonzero root of the $\ellmahl$-Mahler denominator of $f$ belongs to $\mathcal{U}_{\ellmahl}$ (resp.~$\mathcal{U}$) if and only if the $\ellmahl$-Mahler denominator of $g$ satisfies the same property. Now, Theorem~\ref{theo abs 7.1} follows from \cite[Theorems~6.1 \& 7.1]{BorisBellSmertnigGap}  applied to the $\ellmahl$-Mahler series $g$.
\end{rem}

\subsection{$\ellmahl$-Mahler denominator of generalized $\ellmahl$-Mahler series}

We extend the definition of $\ellmahl$-Mahler denominator to generalized $\ellmahl$-Mahler series in the following obvious way. 

\begin{defi}
The $\ellmahl$-Mahler denominator $\denommahl{f}$ of a generalized $\ellmahl$-Mahler series $f\in\Rspe$ is the monic generator of the ideal of $\Qbar[z]$ given by
\begin{equation}\label{ideal p-mahler gpms}
 \left\{P \in \Qbar[z] \ \big \vert \ Pf \in \sum_{i=1}^{d} \Q[z]\malop{\ellmahl}^i(f) \text{ for some } d \in \Z_{\geq 1} \right\}
\end{equation}
if the latter ideal is non trivial; otherwise, we set $\denommahl{f}=0$. 
\end{defi}

\begin{rem}
1) Contrary to the case when $f \in \Qbar((z))$ considered in Section~\ref{sec: mahler den laurent series}, the ideal \eqref{ideal p-mahler gpms} may be trivial and, hence, the $\ellmahl$-Mahler denominator $\denommahl{f}$ may be equal to $0$. For instance, this is the case for $f(z)=z^{\frac{1}{\ellmahl}}$.

2) For any generalized $\ellmahl$-Mahler series $f$ having a decomposition of the form \eqref{eq:generalized_mahler_series_bis} with the $f_{c,j,\tuple}$ in $\Qbar((z))$, the ideal \eqref{ideal p-mahler gpms} is non trivial. This can be proved using arguments involved in the proof of Proposition~\ref{prop growth denom}. As this will not be used in this paper, we do not include the details. 
\end{rem}

Since some generalized $\ellmahl$-Mahler series have a null $\ellmahl$-Mahler denominator, one cannot expect Theorem~\ref{theo abs 7.1} to extend to any generalized $\ellmahl$-Mahler series. However, certain partial extensions of Theorem~\ref{theo abs 7.1} remain valid and will be used in the proof of our purity theorem. The remainder of this section is devoted to establishing these extensions.

\subsubsection{Extension of the ``if'' part of Theorem~\ref{theo abs 7.1}}\label{sec: ext if part}

\begin{prop}\label{theo avec a0}
If the coefficients $a_0,\ldots,a_d$ of the $\ellmahl$-Mahler equation \eqref{eq: mahler1} belong to $\Qbar[z]$, 
then it has $d$  generalized $\ellmahl$-Mahler series solutions that are $\Qbar$-linearly independent and satisfy 
	\begin{itemize}
		\item $\condpuisprime{2}$ if the nonzero roots of $a_{0}$ belong to $\mathcal{U}$;  
		\item $\condpuisprime{3}$ if the nonzero roots of $a_{0}$ belong to $\mathcal{U}_{\ellmahl}$. 
	\end{itemize}
In particular, for any generalized $\ellmahl$-Mahler series $f$ with nonzero $\ellmahl$-Mahler denominator $\denommahl{f}$,
\begin{itemize}
	\item $f$ satisfies $\condpuisprime{2}$ if the nonzero roots of $\denommahl{f}$ belong to $\mathcal{U}$;  
	\item $f$ satisfies $\condpuisprime{3}$ if the nonzero roots of $\denommahl{f}$ belong to $\mathcal{U}_{\ellmahl}$. 
\end{itemize}
\end{prop}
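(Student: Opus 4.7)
The plan is to establish the main assertion of the proposition; the ``in particular'' part then follows readily. Indeed, if $f$ is a generalized $\ellmahl$-Mahler series with nonzero $\denommahl{f}$, then, by definition of the $\ellmahl$-Mahler denominator, $f$ is a solution of a $\ellmahl$-Mahler equation of the form~\eqref{eq: mahler1} with $a_0=\denommahl{f}$ and $a_1,\ldots,a_d\in\Qbar[z]$. The main assertion produces $d$ $\Qbar$-linearly independent generalized $\ellmahl$-Mahler series solutions satisfying $\condpuisprime{r}$; since the solution space is of $\Qbar$-dimension~$d$ (by Theorem~\ref{thm:decomp_chi}) and $\condpuisprime{r}$ is stable under $\Qbar$-linear combinations, $f$ itself satisfies $\condpuisprime{r}$.

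To prove the main assertion, by Theorem~\ref{thm:decomp_chi} it suffices to show that any generalized $\ellmahl$-Mahler series solution $y=\sum_{c,j,\tuple} h_{c,j,\tuple}\xi_{\tuple}e_c\logm^j$ of~\eqref{eq: mahler1} has all Puiseux coefficients $h_{c,j,\tuple}$ satisfying~$\condprime{r}$. To place oneself in the framework of Theorem~\ref{theo abs 7.1} (which concerns Laurent series), I would choose $N\in\Z_{\geq 1}$ large enough that each $h_{c,j,\tuple}$ becomes an element of $\Qbar((z))$ after applying the automorphism $\sigma_N\colon z\mapsto z^N$. Since $\sigma_N$ commutes with $\malop{\ellmahl}$ and can be extended to $\mathcal R$ as the identity on each $e_c$ and on $\logm$, applying $\sigma_N$ transforms the equation into $\sum_k a_k(z^N)\malop{\ellmahl}^k \sigma_N(y)=0$, with coefficients still in $\Qbar[z]$. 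A short order computation verifies that the nonzero roots of $a_0(z^N)$, being $N$-th roots of the nonzero roots of $a_0$, still lie in $\mathcal{U}$ (resp.\ in $\mathcal{U}_{\ellmahl}$).

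The key step is then to extract from this transformed equation a scalar $\ellmahl$-Mahler equation for each Laurent series $\sigma_N(h_{c,j,\tuple})$ whose $\ellmahl$-Mahler denominator divides a product of (twists of) $a_0(z^N)$. I would proceed in two layers. First, using the $\Hahn$-linear independence of the family $(e_c\logm^j)_{(c,j)\in\cj}$ from~\cite[Lemma~30]{RoquesFrobForMahler} together with $\malop{\ellmahl}(e_c)=ce_c$ and $\malop{\ellmahl}(\logm)=\logm+1$, one projects onto each $e_c\logm^j$ and obtains, for $j$ maximal at fixed $c$, a homogeneous equation $\sum_k a_k(z^N)c^k\malop{\ellmahl}^k g_{c,j}=0$ for $g_{c,j}:=\sum_{\tuple} \sigma_N(h_{c,j,\tuple})\xi_\tuple$; for smaller $j$ one gets non-homogeneous equations whose right-hand sides involve previously treated $g_{c,j'}$. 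Second, combining Lemma~\ref{lem:phichi-lamddchi} (which gives $\malop{\ellmahl}^k(\xi_\tuple)=(\lambda_1\cdots\lambda_t)^k\xi_\tuple+\text{lower order}$) with the $\puis$-linear independence of $(\xi_\tuple)_{\tuple\in\alaNp}$ established in Section~\ref{sec:proof uniqueness decomp sol}, one performs a well-founded induction on $\tuple$ to extract, for each $\sigma_N(h_{c,j,\tuple})$, a (possibly non-homogeneous) scalar equation whose homogeneous part has constant coefficient equal to $a_0(z^N)$ up to a nonzero scalar. Converting each non-homogeneous equation into a homogeneous one by composition with the (inductively controlled) $\ellmahl$-Mahler annihilator of the right-hand side, the resulting homogeneous equations for the Laurent series $\sigma_N(h_{c,j,\tuple})$ have constant coefficients whose nonzero roots lie in the prescribed set. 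Theorem~\ref{theo abs 7.1} then yields $\condprime{r}$ for $\sigma_N(h_{c,j,\tuple})$, and therefore for $h_{c,j,\tuple}$ as well, since $\sigma_N$ only rescales exponents by a factor $N$ which changes heights by a bounded multiplicative factor. Proposition~\ref{prop:def Cr gms} then gives $\condpuisprime{r}$ for $y$.

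The main obstacle will be the bookkeeping in the inductive extraction of the scalar equations: the two-layer projection (first onto $e_c\logm^j$, then onto $\xi_\tuple$) mixes the coefficients in a controlled but intricate way, and one must check at each step of the induction that the conversion of non-homogeneous equations into homogeneous ones preserves the property that the nonzero roots of the constant coefficient lie in $\mathcal{U}$ (resp.\ in $\mathcal{U}_{\ellmahl}$).
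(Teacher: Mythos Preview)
Your approach is valid and leads to a correct proof, but it differs substantially from the paper's route.

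The paper does not take an arbitrary solution and project onto the $e_c\logm^j$ and $\xi_\tuple$ components. Instead, it applies Theorem~\ref{theo: fund syst sol princ} to the companion system of~\eqref{eq: mahler1}, obtaining the fundamental matrix $F_1F_2e_C$. Since the entries of $F_2$ lie in $\V$ and those of $e_C$ are $\Qbar$-linear combinations of the $e_c\logm^j$, it suffices to show that every entry of $F_1$ satisfies the Puiseux condition $\condprime{r}$. The key equation is the first relation in~\eqref{eq:F1Theta=AF1}, rewritten (after choosing $\Theta$ upper-triangular, cf.\ Remark~\ref{rem pour theo: fund syst sol princ}) as
\[
a_0\,F_1\Lambda=B\,\malop{\ellmahl}(F_1),\qquad \Lambda=\Theta^{-1},\quad B=a_0A^{-1}\in\Mat_d(\Qbar[z]).
\]
After replacing $z$ by $z^k$ so that $F_1\in\GL_d(\Qbar((z)))$ and $\Lambda$ has entries in $\Qbar[z^{-1}]$, the paper performs a single induction on the \emph{column index} $j$ of $F_1$: for $j=1$ the system of equations~\eqref{eq:condition_coeff_matrix} yields a homogeneous Mahler equation for $f_{1,1}$ with constant coefficient $a_0$, and Theorem~\ref{theo abs 7.1} applies; for higher $j$ one gets inhomogeneous equations whose forcing terms already satisfy $\condprime{r}$, and Lemma~\ref{lem:denomMahler}\ref{denomMahler assertion 2} closes the induction.

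Your two-layer projection (first onto $e_c\logm^j$, then a well-founded induction on $\tuple$ via Lemma~\ref{lem:phichi-lamddchi}) is essentially the machinery the paper deploys in the proof of Proposition~\ref{prop growth denom} (the ``only if'' direction), run here in reverse. It works, but it is heavier: you must track that after rescaling the $\a$'s become integers (so that the $p_{\tuple'}(z)$ in Lemma~\ref{lem:phichi-lamddchi} lie in $\Qbar[z^{-1}]$), that the rescaled $\xi_\tuple$'s remain $\puis$-linearly independent, and you must nest two inductions rather than one. The paper's approach trades this for the structural information already encoded in $F_1$ and the upper-triangular shape of $\Theta$, giving a cleaner single-index induction. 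Both routes ultimately rest on the same two ingredients: Theorem~\ref{theo abs 7.1} and Lemma~\ref{lem:denomMahler}.
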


The last part of the proposition follows immediately form the first statement whose proof is given below, after the following lemma.

\begin{lem}\label{lem:denomMahler}
	Let $f$ and $g$ be generalized $\ellmahl$-Mahler series, whose $\ellmahl$-Mahler denominators are denoted by $\denommahl{f}$ and $\denommahl{g}$ respectively, and suppose that 	
	\begin{equation}\label{delta g}
	\efrak  g = \sum_{i=1}^{n} a_i \malop{\ellmahl}^i(g)+ f 
	\end{equation}
	for some $\efrak \in \Qbar[z] \setminus \{0\}$ and some $a_{1},\ldots,a_{n} \in \Qbar[z]$. 
Then, we have:
\begin{enumerate}[label=\rm (\alph*)]
\item \label{denomMahler assertion 1} $\denommahl{g}$ divides $\efrak \denommahl{f}$; 
\item \label{denomMahler assertion 2} if $f,g \in \Qbar((z))$, then $g$ satisfies 
	\begin{itemize}
		\item $\condprime{2}$ if $f$ satisfies $\condprime{2}$ and the nonzero roots of $\efrak$ belong to $\mathcal{U}$; 
		\item $\condprime{3}$ if $f$ satisfies $\condprime{3}$ and the nonzero roots of $\efrak$ belong to $\mathcal{U}_{\ellmahl}$. 
	\end{itemize}
	\end{enumerate}
\end{lem}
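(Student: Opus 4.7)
The plan is to prove (a) first and then deduce (b) from (a) together with Theorem~\ref{theo abs 7.1}.

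For (a), one may assume $\denommahl{f} \neq 0$, as the divisibility is vacuous otherwise. By the definition of $\denommahl{f}$, there exist $b_1, \ldots, b_d \in \Qbar[z]$ such that $\denommahl{f} \cdot f = \sum_{j=1}^d b_j \malop{\ellmahl}^j(f)$. Multiplying the relation~\eqref{delta g} by $\denommahl{f}$ gives $\efrak \denommahl{f}\, g = \sum_{i=1}^n \denommahl{f} a_i \malop{\ellmahl}^i(g) + \sum_{j=1}^d b_j \malop{\ellmahl}^j(f)$. The key trick I will use is to apply $\malop{\ellmahl}^j$ to~\eqref{delta g}, which yields the identity $\malop{\ellmahl}^j(f) = \malop{\ellmahl}^j(\efrak)\, \malop{\ellmahl}^j(g) - \sum_{i=1}^n \malop{\ellmahl}^j(a_i)\, \malop{\ellmahl}^{i+j}(g)$. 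Since $\malop{\ellmahl}^j$ preserves $\Qbar[z]$, each $\malop{\ellmahl}^j(f)$ rewrites as a $\Qbar[z]$-linear combination of iterates $\malop{\ellmahl}^k(g)$ with $k \geq 1$. Substituting back, $\efrak \denommahl{f}\, g$ becomes a $\Qbar[z]$-linear combination of such iterates, which places $\efrak \denommahl{f}$ in the ideal defining $\denommahl{g}$. As $\efrak \denommahl{f} \neq 0$, this ideal is non-trivial and $\denommahl{g}$ divides $\efrak \denommahl{f}$.

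For (b), suppose $f, g \in \Qbar((z))$. First I would verify that $\denommahl{f} \neq 0$: any generalized $\ellmahl$-Mahler series satisfies a $\ellmahl$-Mahler equation (as noted in the remark following Theorem~\ref{thm:decomp_chi}), and by Remark~\ref{rem: coeff Kinf vs ratio} this equation can be chosen with coefficients in $\Qbar(z)$, hence in $\Qbar[z]$ after clearing denominators, so the defining ideal of $\denommahl{f}$ is non-trivial. Part (a) then gives $\denommahl{g} \mid \efrak \denommahl{f}$, so every nonzero root of $\denommahl{g}$ is a nonzero root of $\efrak$ or of $\denommahl{f}$. The conclusion is now immediate from Theorem~\ref{theo abs 7.1}: if $f$ satisfies $\condprime{2}$ (resp.~$\condprime{3}$), the nonzero roots of $\denommahl{f}$ lie in $\mathcal{U}$ (resp.~$\mathcal{U}_{\ellmahl}$); combined with the hypothesis on the nonzero roots of $\efrak$, the same holds for $\denommahl{g}$, and the reverse direction of Theorem~\ref{theo abs 7.1} yields the desired growth condition on $g$.

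The whole argument is essentially formal, so no step is a genuine obstacle; the only real insight is the iteration trick---applying $\malop{\ellmahl}^j$ to~\eqref{delta g} to convert iterates of $f$ into iterates of $g$---which makes the ideal-membership argument go through.
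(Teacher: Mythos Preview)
Your proof is correct and follows essentially the same approach as the paper's. The paper phrases part (a) more compactly using the Ore algebra: writing \eqref{delta g} as $M(g)=f$ with $M=\efrak-\sum_i a_i\malopop^i$ and the annihilator of $f$ as $L=\denommahl{f}-\sum_j b_j\malopop^j$, one has $LM(g)=L(f)=0$, and the constant term of $LM$ is $\efrak\denommahl{f}$; your ``iteration trick'' is exactly the unpacking of the product $LM$, so the content is identical. For part (b) the paper simply says it follows from (a) and Theorem~\ref{theo abs 7.1}, and your extra verification that $\denommahl{f}\neq 0$ when $f\in\Qbar((z))$ is fine but implicit in the paper's conventions for Laurent series.
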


\begin{proof} 
We first prove \ref{denomMahler assertion 1}.
If $\denommahl{f}=0$, there is nothing to prove since any element of $\Qbar[z]$ divides $\efrak \denommahl{f}=0$. 
We now assume that $\denommahl{f} \neq 0$. Equation~\eqref{delta g} can be rewritten as $M(g)=f$ with $M=\efrak   - \sum_{i=1}^{n} a_i \malopop^i \in \mathcal{D}_{\Q(z)}$. Moreover, by definition of $\denommahl{f}$, there exist $b_{1},\ldots,b_{m} \in \Qbar[z]$ such that 
	$
	\denommahl{f}f=\sum_{j=1}^{m}b_j\malop{\ellmahl}^j(f), 
	$
{\it i.e.}, $L(f)=0$ where $L=\denommahl{f}-\sum_{j=1}^{m}b_j\malopop^j$. Now, the equality $LM(g)=L(f)=0$ shows that  
	$
\efrak	\denommahl{f}g
	\in  \sum_{i=1}^{m+n} \Q[z]\malop{\ellmahl}^i(g)
$
and, hence, 
$\denommahl{g}$ divides $\efrak \denommahl{f}$. This concludes the proof of \ref{denomMahler assertion 1}.

The proof of \ref{denomMahler assertion 2} follows from an obvious  combination of \ref{denomMahler assertion 1} and Theorem~\ref{theo abs 7.1}.
\end{proof}

\begin{proof}[Proof of Proposition~\ref{theo avec a0}] 
We use the notation of Theorem~\ref{theo: fund syst sol princ} applied to the companion system associated with the Mahler equation \eqref{eq: mahler1}. Arguing as in the proof of Theorem~\ref{thm:decomp_chi} given in Section~\ref{sec:proof_thm_decomp_chi}, and reusing the same notations, 
we see that it is sufficient to show that every entry of 
$F_{1}$ satisfies the following property, denoted by $(\diamond)$:
	\begin{itemize}
		\item $\condprime{2}$ if the nonzero roots of $a_{0}$ belong to $\mathcal{U}$; 
		\item $\condprime{3}$ if the nonzero roots of $a_{0}$ belong to $\mathcal{U}_{\ellmahl}$.
	\end{itemize}
To prove this, the key ingredient is the first equation in  \eqref{eq:F1Theta=AF1}, which can be rewritten as follows:
\begin{equation}\label{eq:F1ThetaAbis}
	a_{0}F_1\Lambda=B\malop{\ellmahl}(F_1)
\end{equation}
	where
	$
	\Lambda=\Theta^{-1}
	$ 
	and  
	$$
	B=a_{0}A^{-1}=\begin{pmatrix}
	-a_1 & \cdots & \cdots & -a_d\\  a_{0} & 0 & \cdots & 0
	\\ & \ddots &  \ddots &\vdots \\ &&  a_{0} & 0 
	\end{pmatrix}.
	$$
By Remark~\ref{rem pour theo: fund syst sol princ}, we can and will  assume that $\Theta$ is upper-triangular with diagonal entries in $\Qbar^{\times}$ and above-diagonal entries in $\puisinfzero$, so that 
$$
	\Lambda=\Theta^{-1}
	=\left(
	\begin{array}{ccc}
	\lambda_{1}    &       &  *    \\ 
	\bord & \ddots       &      \\ 
	0 & \bord    & \lambda_{d}     
	\end{array}\right) 
	$$ 
has diagonal entries $\lambda_{1},\ldots,\lambda_{d} \in \Qbar^{\times}$ and above-diagonal entries in $\puisinfzero$.

Note  
that, for any $k \in \Z_{\geq 1}$, the entries of $F_{1}$ satisfy $\condprime{\indC}$ if and only if the entries of $F_{1}(z^{k})$ satisfy $\condprime{\indC}$. Thus, up to replacing $z$ by $z^{k}$ 
for a suitable $k \in \Z_{\geq 1}$, we can and will assume that $F_1$ has entries in $\Qbar((z))$ and that $\Lambda$ has above-diagonal entries in $\Qbar[z^{-1}]$ (the fact that the nonzero roots of $a_{0}$ belong to $\mathcal{U}$ (resp.~$\mathcal{U}_{\ellmahl}$) implies that the nonzero roots of $a_{0}(z^{k})$ belong to $\mathcal{U}$ (resp.~$\mathcal{U}_{\ellmahl}$) as well). 
		
	Setting $F_{1}=(f_{i,j})_{1\leq i,j \leq d}$, we deduce from \eqref{eq:F1ThetaAbis} that, for all $i,j \in \{1,\ldots,d\}$, 
	\begin{equation}\label{eq:condition_coeff_matrix}
	a_{0}\left(\sum_{l=1}^{j-1} * f_{i,l} + \lambda_{j} f_{i,j} \right) = 
	\left\{
	\begin{array}{ll}
	\sum_{k=1}^{d} -a_{k} \malop{\ellmahl}(f_{k,j}) & \mbox{if } i=1, \\
	a_{0} \malop{\ellmahl}(f_{i-1,j}) & \mbox{if } i \in \{2,\ldots,d\}
	\end{array}
	\right.
	\end{equation}
	where the symbol $*$ stands for elements of $\Qbar[z^{-1}]$. Let us prove by induction on $j\in\{1,\ldots,d\}$ that $f_{i,j}$ satisfies $(\diamond)$ for all $i \in\{1,\ldots,d\}$.
	
	\vskip 5 pt
	\noindent{\bf Base case: $j=1$.} When $j=1$, \eqref{eq:condition_coeff_matrix} gives  
	$$
	a_{0} \lambda_{1} f_{i,1}  = 
	\left\{
	\begin{array}{ll}
	\sum_{k=1}^{d} -a_{k} \malop{\ellmahl}(f_{k,1}) & \mbox{if } i=1, \\
	a_{0} \malop{\ellmahl}(f_{i-1,1}) & \mbox{if } i \in \{2,\ldots,d\}.
	\end{array} 
	\right.
	$$
	This implies that, for $i \in \{2,\ldots,d\}$, $f_{i,1}=\lambda_{1}^{-(i-1)}\malop{\ellmahl}^{i-1}(f_{1,1})$ and, hence,  
	$$
	a_{0} \lambda_{1} f_{1,1}  = \sum_{k=1}^{d} -a_{k}\lambda_{1}^{-(k-1)}\malop{\ellmahl}^k(f_{1,1}).
	$$ 
	Thus, the $p$-Mahler denominator of $f_{1,1}$ divides $a_{0}$ and it follows from Theorem~\ref{theo abs 7.1} that $f_{1,1}$ satisfies $(\diamond)$. Therefore, for any $i \in \{2,\ldots,d\}$, $f_{i,1}=\lambda_{1}^{-(i-1)}\malop{\ellmahl}^{i-1}(f_{1,1})$ satisfies $(\diamond)$ as well.

\vskip 5 pt
\noindent{\bf Inductive step.} Let $j\geq 2$ and assume that $f_{i,k}$ satisfies $(\diamond)$ for any $i \in\{1,\ldots,d\}$ and any $k\in\{1,\ldots,j-1\}$. In the following, we let $\star$ stand for $\ellmahl$-Mahler elements of $\Qbar((z))$ satisfying $(\diamond)$.  Isolating the term $f_{i,j}$ in \eqref{eq:condition_coeff_matrix}, one has
	$$
a_{0}\lambda_{j} f_{i,j}  = 
\left\{
\begin{array}{ll}
\sum_{k=1}^{d} -a_{k} \malop{\ellmahl}(f_{k,j}) + \star & \mbox{if } i=1, \\
a_{0} \malop{\ellmahl}(f_{i-1,2}) + \star & \mbox{if } i \in \{2,\ldots,d\}.
\end{array}
\right.
$$
Then, an immediate induction on $i$ gives $f_{i,j}=\lambda_{j}^{-(i-1)}\malop{\ellmahl}^{i-1}(f_{1,j})+\star$ for any $i \in\{1,\ldots,d\}$. Thus, when $i=1$, one obtains
$$
a_{0} \lambda_{j} f_{1,j}  = \sum_{k=1}^{d} -a_{k}\lambda_{j}^{-(k-1)}\malop{\ellmahl}^k(f_{1,j}) +\star
$$ 
It follows from \ref{denomMahler assertion 2} of Lemma~\ref{lem:denomMahler} that $f_{1,j}$ satisfies $(\diamond)$. Therefore, for any $i \in \{2,\ldots,d\}$, $f_{i,j}=\lambda_{j}^{-(i-1)}\malop{\ellmahl}^{i-1}(f_{1,j})+\star$  satisfies $(\diamond)$ as well. This ends the proof of the inductive step.
\end{proof}

\subsubsection{Partial extension of the ``only if'' part of Theorem~\ref{theo abs 7.1}}\label{sec: ext only if part}
 To state the ``only if'' extension of Theorem~\ref{theo abs 7.1}, we introduce the following sets:
 $$
 \alaZ = \bigcup_{t \in \Z_{\geq 0}} \alaZt{t}
 \text{ \ \ \ where \ \ \ }
 \alaZt{t} =\Z_{\geq 0}^{t}\times (\Qbar^{\times})^{t} \times \Z_{> 0}^{t}. 
 $$

\begin{prop}\label{prop growth denom}
	Consider a generalized $\ellmahl$-Mahler series of the form
\begin{equation}\label{eq: pour prop growth denom}
 	f = \sum_{(c,j) \in \cj }  \sum_{\tuple \in \alaZ}  f_{c,j,\tuple}\xi_{\tuple}e_c\logm^j	
\end{equation}
where the $f_{c,j,\tuple} \in \Q((z))$ are $\ellmahl$-Mahler Laurent series. Then, the following hold:
	\begin{itemize}
		\item the $f_{c,j,\tuple}$ satisfy $\condprime{2}$ only if the $\ellmahl$-Mahler denominator of $f$ has all its nonzero roots in $\mathcal U$;
		\item the $f_{c,j,\tuple}$ satisfy $\condprime{3}$ only if the $\ellmahl$-Mahler denominator of $f$ has all its nonzero roots in $\mathcal U_\ellmahl$.
			\end{itemize}
			In particular, in both cases, the $\ellmahl$-Mahler denominator of $f$ is nonzero.
\end{prop}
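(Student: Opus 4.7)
The plan is to construct an explicit $\ellmahl$-Mahler equation $a_0 f + a_1 \malop{\ellmahl}(f) + \cdots + a_d \malop{\ellmahl}^d(f) = 0$ for $f$ with coefficients $a_i \in \Qbar[z]$ such that all the nonzero roots of $a_0$ lie in $\mathcal U$ (resp.~$\mathcal U_\ellmahl$). Since $\denommahl{f}$ divides any such $a_0$ by definition, this will yield both that $\denommahl{f} \neq 0$ and the required confinement of its nonzero roots.

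The construction proceeds through the building blocks of $f$. First, each $f_{c,j,\tuple} \in \Qbar((z))$ is, by hypothesis and Theorem~\ref{theo abs 7.1}, annihilated by a $\ellmahl$-Mahler operator with $\Qbar[z]$ coefficients whose constant term is $\denommahl{f_{c,j,\tuple}}$, all of whose nonzero roots lie in $\mathcal U$ (resp.~$\mathcal U_\ellmahl$). Second, for $\tuple \in \alaZ$, I would prove by well-founded induction on the order $\prec$ from Section~\ref{sec: the xi are p mahler} that $\xi_\tuple$ is annihilated by a $\ellmahl$-Mahler operator with $\Qbar[z]$ coefficients whose constant term is a power of $z$; the inductive step uses the $j=1$ case of Lemma~\ref{lem:phichi-lamddchi}, and here the final assertion of that lemma is crucial, since it ensures that, when $\a \in \Z_{>0}^{t}$, the $p_{\tuple'}$ appearing in the decomposition lie in $\Qbar[z^{-1}]$, so that after clearing $z$-denominators one obtains $\Qbar[z]$ coefficients with constant term purely a power of $z$. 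Third, a direct computation shows that $e_c \logm^j$ is annihilated by $(\malopop - c)^{j+1}$, yielding an equation whose constant term is $(-c)^{j+1} \in \Qbar^{\times}$ and thus contributes no nonzero roots at all.

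It then remains to assemble these equations into a single $\ellmahl$-Mahler equation for $f$. I would consider the finite-dimensional $\malopop$-invariant $\Kinf$-subspace $W \subset \Rspe$ spanned by the products $\malop{\ellmahl}^{i}(f_{c,j,\tuple})\,\malop{\ellmahl}^{i'}(\xi_\tuple)\,e_c \logm^{k}$ for $(c,j,\tuple)$ in the support of the decomposition of $f$ and for $(i,i',k)$ in a suitable finite range. Since $f \in W$ and $W$ is $\malopop$-invariant, the cyclic vector lemma (Proposition~\ref{cyclic vect lem}) applied to the cyclic $\dpuis$-submodule generated by $f$ yields a $\ellmahl$-Mahler equation for $f$. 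The main technical obstacle---and the crux of the proof---is to show that the constant term of this assembled equation divides a product of the constant terms of the individual building-block equations, up to a power of $z$ and a nonzero constant; this is essentially a ``tensor product'' principle for $\ellmahl$-Mahler operators, and requires careful bookkeeping of the denominators appearing when expressing $\malopop$ in a basis of $W$ and when passing from $W$ to the cyclic submodule through the companion / matrix formalism. Once it is established, the nonzero roots of $a_0$ are confined to the union of the nonzero roots of the $\denommahl{f_{c,j,\tuple}}$, which by hypothesis lie in $\mathcal U$ (resp.~$\mathcal U_\ellmahl$), completing the proof.
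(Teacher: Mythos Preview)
Your approach is conceptually plausible but differs substantially from the paper's, and the step you flag as ``the crux'' is in fact a genuine gap rather than mere bookkeeping. Concretely, suppose $L_1$ annihilates $g_1$ with constant term $c_1\in\Qbar[z]$ and $L_2$ annihilates $g_2$ with constant term $c_2\in\Qbar[z]$. Passing to the finite-dimensional $\Kinf$-space $W=\operatorname{Span}_{\Kinf}\{\malop{\ellmahl}^{i}(g_1)\malop{\ellmahl}^{j}(g_2)\}$ and extracting a relation for $g_1g_2$ via linear dependence of $g_1g_2,\malop{\ellmahl}(g_1g_2),\ldots$ gives \emph{some} equation, but the constant term of that equation is the determinant of a minor of a matrix whose entries are rational functions built from $a_i/a_d$, $b_j/b_e$ and their $\malop{\ellmahl}$-iterates; there is no a priori reason its nonzero roots should be controlled by those of $c_1$ and $c_2$ alone. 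For Laurent series one can rescue this indirectly by appealing to Theorem~\ref{theo abs 7.1} and the closure of $\condprime{r}$ under products of Laurent series, but here you are multiplying a Laurent series by $\xi_\tuple$ and by $e_c\logm^j$, which are not Laurent series, so that shortcut is unavailable. A direct operator-theoretic control of the constant term under products would need an argument you have not supplied.

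The paper avoids this issue entirely. Rather than assembling a single equation from tensor products, it argues by well-founded induction using the ``peeling'' Lemma~\ref{lem:denomMahler}(a): given $f$, one isolates a maximal term (first with respect to a total well-order on $\alaZ$, then with respect to the $(c,j)$-index), uses the relation $\malop{\ellmahl}(\xi_{\tuple_0})\equiv \lambda\,\xi_{\tuple_0}$ modulo lower terms (Lemma~\ref{lem:phichi-lamddchi}) together with the Mahler equation for the leading Laurent coefficient $f_{\tuple_0}$ to write $\denommahl{} f=\sum a_i c^i\malop{\ellmahl}^i(f)+g$ with $g$ strictly lower, and then invokes Lemma~\ref{lem:denomMahler}(a) to get $\denommahl{f}\mid \denommahl{}\cdot\denommahl{g}$. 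The induction hypothesis controls $\denommahl{g}$, and Theorem~\ref{theo abs 7.1} controls $\denommahl{}$. This recursive reduction replaces your single unproved ``tensor product principle'' by an iterated application of the elementary divisibility Lemma~\ref{lem:denomMahler}, which is where the real work is done.
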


\begin{proof}
In the whole proof, we fix $\indC \in \{2,3\}$.
\vskip 5pt
\noindent {\bf First step: when \eqref{eq: pour prop growth denom} is a Hahn series}.  To carry out this first step, we need to introduce a total well-order $\prec$ on $\alaZ$. 
It is defined as follows.
\begin{itemize}
	\item For any $\alpha \in \Z_{\geq 0}$ and $t \in \Z_{\geq 1}$, we fix an arbitrary well order $\prec_{\alpha,t}$ on 
	$(\{\alpha\} \times (\Z_{\geq 0})^{t-1}) \times (\Qbar^\times)^t \times \Z_{> 0}^t$.
	Such a well-order exists because the latter set is countable.
	\item For $\tuple=(\balpha,\lambd,\a) \in\alaZt{t}$ and $\tuple'=(\balpha',\lambd',\a') \in \alaZt{t'}$, we write $\tuple' \prec \tuple$ if one of the following holds:
	\begin{itemize}
		\item $t'<t$,
		\item $t'=t\geq 1$ and $\alpha'_1 < \alpha_1$,
		\item $t'=t\geq 1$, $\alpha'_1=\alpha_1$ and $\tuple' \prec_{\alpha_1,t} \omeg$.
	\end{itemize}
\end{itemize}

\begin{rem}
In Section~\ref{sec: the xi are p mahler} and Proposition~\ref{prop:gen p-mahl}, our arguments relied on well-founded induction on $\balpha$ with respect to a partial well-order on $\bigcup_{t \ge 0} \mathbb{Z}_{\ge 0}^t$. This approach will no longer suffice here. 
\end{rem}

Consider, for all $\tuple_{0} \in \alaZ$, the set $\mathcal W_{\tuple_{0}}$ of $\ellmahl$-Mahler Hahn series of the form
$$
f=f_{\tuple_0}\xi_{\tuple_0}+\sum_{\substack{\tuple \in \alaZ \\ \tuple \prec \tuple_{0}}} f_\tuple\xi_\tuple
$$
where each $f_\tuple \in \Q((z))$ is a $\ellmahl$-Mahler Laurent series satisfying $\condprime{\indC}$. In what follows, we will freely use, without further comment, that $\mathcal{W}_{\tuple_0}$ is a $\Q[z]$-module closed under multiplication by any $\ellmahl$-Mahler Laurent series satisfying $\condprime{\indC}$ and under the action of $\malop{\ellmahl}$ (the latter property follows easily from Lemma~\ref{lem:phichi-lamddchi}). 

To prove that the proposition holds true in the special case when \eqref{eq: pour prop growth denom} is a Hahn series, it is clearly equivalent to prove that it holds true for any element of any $\mathcal W_{\tuple_{0}}$. 
We proceed by well-founded induction on $\tuple_0$ with respect to the total well-order $\prec$ on $\alaZ$ introduced above. So, we consider $\tuple_{0} \in \alaZ$ and we assume that, for all $\tuple_{0}' \in \alaZ$ such that $\tuple'_{0} \prec \tuple_{0}$, the proposition holds true for any element of $\mathcal W_{\tuple_{0}'}$. Consider $f \in \mathcal W_{\tuple_{0}}$ and let us prove that the conclusion of the proposition holds true for this $f$.
By definition of the $\ellmahl$-Mahler denominator $\denommahl{}$ of $f_{\tuple_{0}}$, there exist $a_1,\ldots,a_d \in \Q[z]$ such that
$$
\denommahl{} f_{\tuple_{0}}=\sum_{j=1}^d a_j\malop{\ellmahl}^j(f_{\tuple_{0}})\,.
$$
We claim that 
\begin{equation}\label{den f eq avec g}
\denommahl{}f = \sum_{i=1}^d a_jc^{j}\malop{\ellmahl}^j(f) + g 
\end{equation}
with $g \in \mathcal W_{\tuple_{0}'}$ for some $\tuple_0' \in\alaZ$ such that $\tuple_{0}' \prec \tuple_{0}$.
Indeed, since $f\equiv f_{\tuple_{0}}\xi_{\tuple_{0}} \mod \mathcal W_{\tuple_{0}'}$ for some $\tuple_{0}' \in\alaZ$ such that $\tuple_{0}' \prec \tuple_{0}$, we have 
\begin{equation}\label{d f cong}
\denommahl{}f\equiv \denommahl{}f_{\tuple_{0}}\xi_{\tuple_{0}}
\equiv  \sum_{j=1}^d a_j \malop{\ellmahl}^j(f_{\tuple_{0}})\xi_{\tuple_{0}} \mod \mathcal W_{\tuple_{0}'}. 
\end{equation}
But, it follows from Lemma~\ref{lem:phichi-lamddchi} that 
\begin{equation}\label{xi omega 0 cong}
\xi_{\tuple_{0}} \equiv c^{j} \malop{\ellmahl}^j(\xi_{\tuple_{0}} ) \mod \mathcal W_{\tuple_{0}''} 
\end{equation}
for some $c \in \Qbar^{\times}$ and some $\tuple_{0}'' \in \alaZ$. Up to replacing $\tuple_{0}'$ and $\tuple_{0}''$ by their maximum, we can and will assume that $\tuple_{0}'=\tuple_{0}''$. 
Combining \eqref{d f cong} and \eqref{xi omega 0 cong}, we get 
$$
\denommahl{}f
\equiv\sum_{i=1}^d a_jc^{j}\malop{\ellmahl}^j(f) 
\mod \mathcal W_{\tuple_{0}'}
$$
and this justifies \eqref{den f eq avec g}.

Now, since $f_{\tuple_{0}}$ is a $\ellmahl$-Mahler Laurent series satisfying $\condprime{\indC}$, it follows from Theorem~\ref{theo abs 7.1} that the nonzero roots of the $\ellmahl$-Mahler denominator $\denommahl{}$ of $f_{\tuple_{0}}$ belong to $S_{\indC}=\mathcal U$ if $\indC=2$, and $S_{\indC}=\mathcal U_\ellmahl$ if $\indC=3$. Moreover, by the induction hypothesis, the $\ellmahl$-Mahler denominator $\denommahl{g}$ of $g$ satisfies the same property. Using \eqref{den f eq avec g} and Lemma~\ref{lem:denomMahler}, we get that the $\ellmahl$-Mahler denominator $\denommahl{f}$ of $f$ divides $\denommahl{}\cdot \denommahl{g}$ and, hence, has its nonzero roots in $S_{\indC}$. This concludes the proof of the induction and the proof of Proposition~\ref{prop growth denom} in the case when when \eqref{eq: pour prop growth denom} is a Hahn series.

\vskip 5pt
\noindent {\bf Second step: the general case}. To any generalized $\ellmahl$-Mahler series $f$ of the form \eqref{prop growth denom}, 	
we attach a positive integer $K(f)$ defined as follows. For any $(c,j) \in \cj$, we set 
\begin{equation}\label{eq:hcj}
 h_{c,j} = \sum_{\tuple \in \alaZ} f_{c,j,\tuple}\xi_{\tuple} \in \Hahn, 
\end{equation}
so that 
\begin{equation}\label{eq:fhcj}
	f=\sum_{(c,j) \in \cj} h_{c,j}e_c\logm^j.
\end{equation}
	We let $\mathcal C(f)$ be the set of $c\in \Qbar^{\times}$ such that $h_{c,j}\neq 0$ for some $j \in \Z_{\geq 0}$. For each $c \in \mathcal C(f)$, we let $J(f,c)$ denote the maximal $j \in \Z_{\geq 0}$ such that $h_{c,j}\neq 0$. We set $K(f)=\sum_{c \in\mathcal C(f)}(1+J(f,c))$.
For any $k \in \Z_{\geq 1}$, we let $\mathcal K_{k}$ be the set of generalized $\ellmahl$-Mahler series $f$ of the form \eqref{eq: pour prop growth denom} such that the $f_{c,j,\tuple}$ satisfy $\condprime{\indC}$ and such that $K(f) \leq k$.
 To prove that the proposition holds true, it is clearly equivalent to prove that it holds true for any element of any  $\mathcal K_{k}$. 
Let us prove this by induction on $k \in \Z_{\geq 1}$.

\vskip 5 pt 
\noindent {\bf Base case $k=1$.} With the above notations, for any $f \in \mathcal K_{1}$, we have  $f = h_{c,0}e_c$ for some $c \in \Qbar^{\times}$. As $h_{c,0}e_c$ and $h_{c,0}$ have the same $\ellmahl$-Mahler denominator, the result follows immediately from the first step of the proof. 
\vskip 5 pt 
\noindent 	{\bf Inductive step.}  
Consider $k \in \Z_{\geq 2}$ and assume that the proposition holds true for any element of $\mathcal K_{k-1}$. Consider a nonzero generalized $\ellmahl$-Mahler series $f \in \mathcal K_{k}$.  
Choose an arbitrary $c_0 \in \mathcal C(f)$ and set $j_0=J(f,c_0)$. Let $\K \subset \K_{k-1}$ denote the set generalized $\ellmahl$-Mahler series $g$ such that $\mathcal C(g) \subset \mathcal C(f)$, $J(g,c)\leq J(f,c)$ for any $c \in \mathcal C(g)$ and $J(g,c_0)<j_0=J(f,c_0)$. In what follows, we will freely use the fact that $\mathcal K$ is a $\Q[z]$-module invariant under $\malop{\ellmahl}$. By definition of the $\ellmahl$-Mahler denominator $\dfrak$ of $h_{c_0,j_0}$, we have
	$$\dfrak h_{c_0,j_0}=\sum_{i=1}^d a_i\malop{\ellmahl}^i(h_{c_0,j_0})
	$$ 
	for some $d \in \Z_{\geq 1}$ and some $a_1,\ldots,a_{d} \in \Qbar[z]$. 
We claim that 
\begin{equation}\label{claim d f}
\dfrak f = \sum_{i=1}^d a_ic_0^{-i}\malop{\ellmahl}^i(f) + g 
\end{equation}
with $g \in \mathcal K$.
Indeed, since $f\equiv h_{c_0,j_0}e_{c_0}\logm^{j_0} \mod \mathcal K$, we have:
	$$
	\dfrak f \equiv  \dfrak h_{c_{0},j_{0}}e_{c_0}\logm^{j_{0}} \equiv \sum_{i=1}^d a_i\malop{\ellmahl}^i(h_{c_0,j_0})e_{c_0}\logm^{j_0} \mod \mathcal{K}.
$$
Furthermore, using the identity $\malop{\ellmahl}^i(e_{c_0}\logm^{j_0})=c_0^ie_{c_0}(\logm+i)^{j_0}$ for $i \in \{0,\ldots,j_{0}\}$, it is easily seen that $e_{c_0}\logm^{j_0}$ is equal to the sum of $c_0^{-i}\malop{\ellmahl}^i(e_{c_0}\logm^{j_0})$ with a $\Qbar$-linear combination of the $e_{c_0}\logm^j\in \K$, $j \in \{0,\ldots,j_{0}-1\}$. This implies that 
$$
\dfrak f \equiv
\sum_{i=1}^d a_ic_0^{-i}\malop{\ellmahl}^i(h_{c_0,j_0}e_{c_0}\logm^{j_0}) \mod \mathcal K.
$$
Since $f\equiv h_{c_0,j_0}e_{c_0}\logm^{j_0} \mod \mathcal K$, we get 
$$
\dfrak f \equiv 
\sum_{i=1}^d a_ic_0^{-i}\malop{\ellmahl}^i(f) \mod \mathcal K
$$
and this proves our claim. 
Now, it follows from the first part of the proof that the $\ellmahl$-Mahler denominator $\dfrak$ of $h_{c_0,j_0}$ has its nonzero roots in $S_{\indC}=\mathcal U$ if $\indC=2$, and $S_{\indC}=\mathcal U_\ellmahl$ if $\indC=3$. Moreover, since $\K \subset \K_{k-1}$, by the induction hypothesis, the $\ellmahl$-Mahler denominator $\denommahl{g}$ of $g$ satisfies the same property. Using \eqref{claim d f} and Lemma~\ref{lem:denomMahler}, we get that the $\ellmahl$-Mahler denominator $\denommahl{f}$ of $f$ divides $\denommahl{}\cdot \denommahl{g}$ and, hence, has its nonzero roots in $S_{\indC}$. This concludes the induction and the proof of Proposition~\ref{prop growth denom}. 
\end{proof}
\black

\section{Purity Theorem: proof of Theorem~\ref{thm:pureté1}}\label{sec:proof purity theo}

The proof is given below, after a few lemmas that will enable us to reduce the problem to the case of generalized $\ellmahl$-Mahler series of the form
\eqref{eq:generalized_mahler_series_bis}
in which the $f_{c,j,\tuple}$ are Laurent series, rather than merely Puiseux series.

The family $(e_c\logm^j)_{(c,j)\in\cj}$ is a basis of the $\Hahn$-vector space $\mathcal R$ introduced in Section~\ref{sec: fund mat sol rappels the ring R}.  For any $\nu \in \Z_{\geq 1}$ relatively prime with $\ellmahl$, for any $k \in \Z_{\geq 0}$, we consider the map $[\nu\ellmahl^k]_{*}: \mathcal R \rightarrow \mathcal R$ defined, for any
$$
f=\sum_{(c,j)\in\cj} f_{c,j}e_c\logm^j \in \mathcal R
$$
with $f_{c,j} \in \Hahn$, by  
$$
 [\nu\ellmahl^k]_{*}f  =  \sum_{(c,j)\in\cj} f_{c,j}(z^{\nu\ellmahl^k}) \malop{\ellmahl}^{k}(e_c)\malop{\ellmahl}^{k}(\logm)^j \\
 =  \sum_{(c,j)\in\cj} f_{c,j}(z^{\nu\ellmahl^k}) c^{k} e_c  (\logm+k)^j. 
$$
This map is an automorphism of the $\Qbar$-vector space $\mathcal R$ and satisfies, for any $h \in \Hahn$ and any $f \in \mathcal R$,  
\begin{equation}\label{eq : diff aut nu ell k}
  [\nu\ellmahl^k]_{*}(h f)=h(z^{\nu\ellmahl^k})[\nu\ellmahl^k]_{*}(f).
\end{equation}
Its inverse $[\nu\ellmahl^k]_{*}^{-1} : \mathcal R \rightarrow \mathcal R$ is given, for all $f \in \mathcal R$, by 
$$[\nu\ellmahl^k]_{*}^{-1}f=\sum_{(c,j)\in\cj} f_{c,j}(z^{\nu^{-1}\ellmahl^{-k}}) c^{-k} e_c  (\logm-k)^j.
$$

\begin{lem}\label{lem:eq pour * nu pk}
An element $f$ of $\mathcal R$ is solution of the equation 
	\begin{equation*}\label{eq: action nu ellmahl k}
	a_0(z)y+a_1(z)\malop{\ellmahl}(y)+ \cdots+ a_d(z) \malop{\ellmahl}^d(y)=0,
\end{equation*}
$a_0,\ldots,a_d \in \Kinf$, if and only if $ [\nu\ellmahl^k]_{*}f$ is solution of the equation
	$$
	a_0(z^{\nu\ellmahl^k})y  +a_1(z^{\nu\ellmahl^k})\malop{\ellmahl}(y)+ \cdots+ a_d(z^{\nu\ellmahl^k}) \malop{\ellmahl}^d(y)=0.
	$$
\end{lem}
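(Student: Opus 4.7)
The plan is to prove the two relations
\begin{equation*}
[\nu\ellmahl^k]_{*}(hf) = h(z^{\nu\ellmahl^k})\,[\nu\ellmahl^k]_{*}(f) \quad \text{and} \quad [\nu\ellmahl^k]_{*}\circ\malop{\ellmahl} = \malop{\ellmahl}\circ[\nu\ellmahl^k]_{*}
\end{equation*}
for all $h \in \Hahn$ and $f \in \mathcal R$. The first one is just \eqref{eq : diff aut nu ell k}, stated just before the lemma. For the second, the only thing to check is a direct computation on a basis element $f_{c,j}e_c\logm^j$: one obtains on both sides
$f_{c,j}(z^{\nu\ellmahl^{k+1}})\,c^{k+1}\,e_c\,(\logm+k+1)^j$, where the factor $c^{k+1}$ comes from $c^k \cdot c$ in one order (applying $\malop{\ellmahl}$ after $[\nu\ellmahl^k]_{*}$, using $\malop{\ellmahl}(e_c)=ce_c$) and from $c\cdot c^k$ in the other (applying $[\nu\ellmahl^k]_{*}$ after $\malop{\ellmahl}$).

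Once these two commutation relations are in hand, the ``only if'' direction follows by simply applying the $\Qbar$-linear operator $[\nu\ellmahl^k]_{*}$ to the equation
$\sum_{i=0}^{d} a_i(z)\malop{\ellmahl}^i(f)=0$:
each term transforms as
\begin{equation*}
[\nu\ellmahl^k]_{*}\bigl(a_i(z)\malop{\ellmahl}^i(f)\bigr) = a_i(z^{\nu\ellmahl^k})\,[\nu\ellmahl^k]_{*}\malop{\ellmahl}^i(f) = a_i(z^{\nu\ellmahl^k})\,\malop{\ellmahl}^i\bigl([\nu\ellmahl^k]_{*}f\bigr),
\end{equation*}
using first \eqref{eq : diff aut nu ell k} and then the commutation with $\malop{\ellmahl}$ iterated $i$ times. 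The ``if'' direction is then obtained by applying $[\nu\ellmahl^k]_{*}^{-1}$, which satisfies analogous commutation rules (with $z^{\nu\ellmahl^k}$ replaced by $z^{\nu^{-1}\ellmahl^{-k}}$), to the transformed equation.

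There is no real obstacle here; the only thing one must be careful about is bookkeeping the constants $c^k$ and the shifts $\logm \mapsto \logm+k$ when checking the commutation $[\nu\ellmahl^k]_{*}\circ\malop{\ellmahl}=\malop{\ellmahl}\circ[\nu\ellmahl^k]_{*}$, which is just a one-line verification on the basis $(e_c\logm^j)$.
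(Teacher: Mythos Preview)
Your proposal is correct and follows essentially the same approach as the paper: both proofs use the $\Hahn$-semilinearity relation \eqref{eq : diff aut nu ell k} together with the commutation $[\nu\ellmahl^k]_{*}\circ\malop{\ellmahl}=\malop{\ellmahl}\circ[\nu\ellmahl^k]_{*}$ (which the paper simply asserts, while you sketch its one-line verification on basis elements). The only cosmetic difference is that the paper handles both directions at once by invoking that $[\nu\ellmahl^k]_{*}$ is an isomorphism, whereas you treat the two implications separately via $[\nu\ellmahl^k]_{*}$ and its inverse.
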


\begin{proof} Since $ [\nu\ellmahl^k]_{*}$ is an isomorphism, the equation 
$$
a_0(z)f+a_1(z)\malop{\ellmahl}(f)+ \cdots+ a_d(z) \malop{\ellmahl}^d(f)=0
$$
is equivalent to the equation 
\begin{equation}\label{eq:nu ellmahl appl eq f}
 [\nu\ellmahl^k]_{*}(a_0(z)f+a_1(z)\malop{\ellmahl}(f)+ \cdots+ a_d(z) \malop{\ellmahl}^d(f))=0.  
\end{equation}
Using the fact that $[\nu\ellmahl^k]_{*}$ is $\Q$-linear and satisfies \eqref{eq : diff aut nu ell k}, we see that \eqref{eq:nu ellmahl appl eq f} is equivalent to 
 $$
	a_0(z^{\nu\ellmahl^k}) [\nu\ellmahl^k]_{*}(f)  +a_1(z^{\nu\ellmahl^k}) [\nu\ellmahl^k]_{*}(\malop{\ellmahl}(f))+ \cdots+ a_d(z^{\nu\ellmahl^k}) [\nu\ellmahl^k]_{*}(\malop{\ellmahl}^d(f))=0.
	$$
Last, since $[\nu\ellmahl^k]_{*}$ and $\malop{\ellmahl}$ commute, the latter equation is equivalent to 
$$
	a_0(z^{\nu\ellmahl^k}) [\nu\ellmahl^k]_{*}(f)  +a_1(z^{\nu\ellmahl^k}) \malop{\ellmahl}([\nu\ellmahl^k]_{*}(f))+ \cdots+ a_d(z^{\nu\ellmahl^k}) \malop{\ellmahl}^d([\nu\ellmahl^k]_{*}(f))=0.
	$$
\end{proof}

\begin{lem}\label{lem:change_variable}
Consider a generalized $\ellmahl$-Mahler series $f$, a positive integer $\nu$ relatively prime with $\ellmahl$, a non-negative integer $k$ and $\indC \in \{1,2,3,4,5\}$. Then, $f$ satisfies $\condpuisprime{\indC}$ if and only if $ [\nu\ellmahl^k]_{*}f$ satisfies $\condpuisprime{\indC}$.
\end{lem}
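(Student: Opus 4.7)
The plan is to transport a good decomposition of $f$ through $[\nu\ellmahl^k]_*$ and show that it remains good; the converse direction will follow symmetrically by applying the same argument to $[\nu\ellmahl^k]_*^{-1}$, which substitutes $z^{1/(\nu\ellmahl^k)}$ for $z$. Since, by Definition~\ref{defi:def Cr gms}, $\condpuisprime{\indC}$ only requires the existence of \emph{some} decomposition of the form \eqref{eq:generalized_mahler_series_bis} whose Puiseux coefficients satisfy $\condprime{\indC}$, a one-sided construction is sufficient in each direction.

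First, I would start from a decomposition $f = \sum_{(c,j)\in\cj}\sum_{\tuple\in\ala} f_{c,j,\tuple}\xi_\tuple e_c\logm^j$ with every $f_{c,j,\tuple}$ satisfying $\condprime{\indC}$. A direct computation from the definition of $[\nu\ellmahl^k]_*$ and from the explicit formula \eqref{eq:def_chi intro} yields two elementary identities: $\xi_{(\balpha,\lambd,\a)}(z^{\nu\ellmahl^k}) = \xi_{(\balpha,\lambd,\nu\ellmahl^k\a)}(z)$, which stays inside the family $(\xi_{\tuple'})_{\tuple'\in\ala}$, and $(\logm+k)^j = \sum_{j'\leq j}\binom{j}{j'}k^{j-j'}\logm^{j'}$. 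Substituting and reindexing would give a decomposition
\begin{equation*}
[\nu\ellmahl^k]_*f = \sum_{(c,j')\in\cj}\sum_{\tuple'\in\ala} g_{c,j',\tuple'}\xi_{\tuple'}e_c\logm^{j'},
\end{equation*}
in which each $g_{c,j',\tuple'}$ is a finite $\Qbar$-linear combination of Puiseux series of the form $f_{c,j,\tuple}(z^{\nu\ellmahl^k})$, with $\tuple = (\balpha',\lambd',\a'/(\nu\ellmahl^k))$ when $\tuple' = (\balpha',\lambd',\a')$.

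The heart of the argument is then the following elementary fact: for any $M\in\Z_{\geq 1}$ and any Puiseux series $h$, $h$ satisfies $\condprime{\indC}$ iff $h(z^M)$ does. Indeed, writing $h=\sum h_\gamma z^\gamma$, the coefficient of $z^{M\gamma}$ in $h(z^M)$ is $h_\gamma$, so only the height of the exponents changes. If $\gamma=a/b$ and $M\gamma=a'/b'$ are both in lowest terms, the relation $Mab' = a'b$ together with $\gcd(a,b)=\gcd(a',b')=1$ forces $b\mid Mb'$, $b'\mid Mb$, $a\mid Ma'$ and $a'\mid Ma$, whence $H(\gamma)/M \leq H(M\gamma) \leq MH(\gamma)$. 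This preserves, up to multiplicative constants, each of the five growth functions $H$, $\log^2 H$, $\log H$, $\log\log H$ and $1$, so $\condprime{\indC}$ is invariant under $z\mapsto z^M$. Since the Puiseux series satisfying $\condprime{\indC}$ form a $\Qbar$-vector space (via the standard inequalities $\hgt(a+b)\leq\hgt(a)+\hgt(b)+\log 2$ and $\hgt(\lambda a)\leq\hgt(\lambda)+\hgt(a)$), each $g_{c,j',\tuple'}$ satisfies $\condprime{\indC}$, and the displayed decomposition witnesses $\condpuisprime{\indC}$ for $[\nu\ellmahl^k]_*f$. The only—and very minor—obstacle is notational bookkeeping: tracking the binomial re-summation defining the $g_{c,j',\tuple'}$ and checking that the rescaling $\tuple\mapsto(\balpha,\lambd,\nu\ellmahl^k\a)$ is a bijection between the supports of the old and the new decompositions, which is immediate since $\a'\mapsto\a'/(\nu\ellmahl^k)$ inverts it.
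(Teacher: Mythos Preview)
Your argument is correct and is, in fact, more elementary than the paper's. Both proofs first reduce to the Puiseux case via the height inequality $H(\gamma)/M \le H(M\gamma) \le MH(\gamma)$, but they then diverge. The paper works with the \emph{standard} decomposition of $f$ and invokes Proposition~\ref{prop:def Cr gms} to get the full equivalence $f\in\condpuisprime{\indC}\Leftrightarrow[\nu\ellmahl^k]_*f\in\condpuisprime{\indC}$ in one stroke; this forces a case split $k=0$ versus $k\ge 1$, because multiplying $\a$ by $\ellmahl$ leaves $\alaNp$, so Lemma~\ref{lem:phichi-lamddchi} is needed to re-standardize $\xi_{\tuple}(z^{\ellmahl})$. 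You instead transport an \emph{arbitrary} witnessing decomposition through $[\nu\ellmahl^k]_*$ using only the elementary identity $\xi_{(\balpha,\lambd,\a)}(z^{c})=\xi_{(\balpha,\lambd,c\a)}(z)$ and binomial expansion of $(\logm+k)^j$; this proves one implication, and you close the loop by running the same computation on $[\nu\ellmahl^k]_*^{-1}$. Your route avoids the standard-decomposition machinery and the case distinction entirely; the paper's route buys both directions simultaneously at the cost of that machinery. Either way the proof is short; yours is self-contained.
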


\begin{proof}
Let us first consider the case when $f$ is a Puiseux series. In order to prove the lemma in this case, it is clearly sufficient to prove that if $f \in \puis$ satisfies $\condprime{\indC}$ and if $c \in \QQ_{>0}$, then $g(z)=f(z^{c})$ satisfies $\condprime{\indC}$. Let us prove this. Setting $f=\sum_{\gamma \in \QQ} f_{\gamma}z^{\gamma}$, we have $g=\sum_{\gamma \in \QQ} g_{\gamma}z^{\gamma}$ with $g_{\gamma}= f_{c^{-1}\gamma}$. If $r=1$, we have $\hgt(f_\gamma) \in \mathcal O(H(\gamma))$, so $\hgt(g_\gamma) \in \mathcal O(H(c^{-1}\gamma))$. But, $H(c^{-1} \gamma) \leq H(c^{-1})H(\gamma)$. So $H(c^{-1} \gamma) \in \mathcal O(H(\gamma))$ and, hence, $\hgt(g_\gamma) \in \mathcal O(H(\gamma))$ so that $g$ satisfies $\condprime{1}$ as wanted. The cases $r \in \{2,3,4,5\}$ are similar.

	We now come to the general case. 
	Let 
	$$
	f=\sum_{(c,j) \in \cj}\sum_{\tuple \in \alaNp}f_{c,j,\tuple}\xi_{\tuple}e_c \logm^j
	$$
be the standard decomposition of $f$. 
\vskip 5pt	
\noindent 
{\bf Case $k=0$.} 
The standard decomposition of $[\nu]_{*}f$ is given by  
\begin{equation*}\label{nu * f}
 [\nu]_{*}f 
 =\sum_{(c,j) \in \cj}\sum_{\tuple\in \alaNp}f_{c,j,\tuple}(z^\nu)\xi_{\tuple}(z^{\nu})e_c \logm^j
 =\sum_{(c,j) \in \cj}\sum_{\tuple\in \alaNp}f_{c,j,\tuple}(z^\nu)\xi_{\tuple_{\nu}}(z)e_c \logm^j
\end{equation*}
where, for any $\tuple=(\balpha,\lambd,\a) \in \alaNp$, we set $\tuple_{\nu}=(\balpha,\lambd,\nu\a) \in \alaNp$.  
Then, the following properties are equivalent:
\begin{enumerate}
\item $f$ satisfies $\condpuisprime{\indC}$; 
\item $\forall (c,j) \in \cj, \forall \tuple\in \alaNp$, $f_{c,j,\tuple}(z)$ satisfies $\condprime{\indC}$; 
\item $\forall (c,j) \in \cj, \forall \tuple\in \alaNp$, $f_{c,j,\tuple}(z^\nu)$ satisfies $\condprime{\indC}$;  
\item	$[\nu]_{*}f$ satisfies $\condpuisprime{\indC}$.
\end{enumerate}
Indeed, the equivalences between (1) and (2) and between (3) and (4) follow directly from Proposition~\ref{prop:def Cr gms} and the equivalence between (2) and (3) follows from the Puiseux case considered at the beginning of the proof. This concludes the proof in the case $k=0$.

\vskip 5pt	
\noindent 
{\bf Case $\nu=1$ and $k=1$.}
It follows from Lemma~\ref{lem:phichi-lamddchi} that, for any $\tuple \in \alaNp$, $\xi_{\omeg}(z^{\ellmahl})$ is a $\puisinfzero$-linear combination of some $\xi_{\tuple'}(z)$ with $\tuple' \in \alaNp$. 
We infer form this that the standard decomposition of $[\ellmahl]_{*}f$ is given by 
\begin{align*}
[\ellmahl]_{*}f &=  \sum_{(c,j) \in \cj}\sum_{\tuple\in \alaNp}f_{c,j,\tuple}(z^\ellmahl)\xi_{\tuple}(z^{\ellmahl})ce_c (\logm+1)^j\\ \nonumber&
=\sum_{(c,j') \in \cj}\sum_{\tuple'\in \alaNp}g_{c,j',\tuple'}(z)\xi_{\tuple'}(z)e_c \logm^{j'}
\end{align*}
where each Puiseux series $g_{c,j',\tuple'}(z)$ is a $\puisinfzero$-linear combination of the $f_{c,j,\tuple}(z^{\ellmahl})$ for some $j \in \Z_{\geq 0}$ and $\tuple \in \alaNp$. Then, the following properties are equivalent:
\begin{enumerate}
	\item $f$ satisfies $\condpuisprime{\indC}$; 
	\item $\forall (c,j) \in \cj, \forall \tuple\in \alaNp$, $f_{c,j,\tuple}(z)$ satisfies $\condprime{\indC}$; 
	\item $\forall (c,j') \in \cj, \forall \tuple' \in \alaNp$, $g_{c,j',\tuple'}(z)$ satisfies $\condprime{\indC}$;  
	\item	$[\ellmahl]_{*}f$ satisfies $\condpuisprime{\indC}$.
\end{enumerate}
Indeed, the equivalences between (1) and (2) and between (3) and (4) follow directly from Proposition~\ref{prop:def Cr gms}. The equivalence between (2) and (3) follows from the Puiseux case considered at the beginning of the proof and the fact that the set $\condpuisprime{\indC}$ is a $\puisinfzero$-module.

\vskip 5pt	
\noindent
{\bf General case.}  The case $k \in \Z_{\geq 1}$ follows immediately from the previous particular cases by using the fact that $[\nu \ellmahl^{k}]_{*}=[\ellmahl]_{*}^{k} [\nu]_{*}$.
\end{proof}

We are now ready to prove Theorem~\ref{thm:pureté1}. 
We set 
\begin{equation}\label{eq:f for proof theo purete 1}
f
=
\sum_{(c,j) \in \cj}\sum_{\tuple \in \ala}f_{c,j,\tuple}\xi_{\tuple}e_c \logm^{j}  
\end{equation}
where the $f_{c,j,\tuple}$ are $\ellmahl$-Mahler Puiseux series satisfying $\condprime{\indC}$.

	Let first consider the case $\indC=1$. Theorem~\ref{thm:decomp_chi} ensures that the minimal $\ellmahl$-Mahler equation of $f$ over $\Kinf$ has a full basis of generalized $\ellmahl$-Mahler series solutions. Theorem~\ref{thm: hgt} guarantees that any generalized $\ellmahl$-Mahler series satisfies $\condpuisprime{1}$. This concludes the proof in the case $\indC=1$.
	
	We now consider the case $\indC \in \{2,3\}$.
	
Let us first consider the special case when the $f_{c,j,\tuple}$ belong to $\Qbar((z))$ and where the $\tuple$ involved in the support of the sum  \eqref{eq:f for proof theo purete 1} belong to the set $\alaZ$ introduced in Section~\ref{sec: ext only if part}. By definition of the $\ellmahl$-Mahler denominator $\denommahl{f}$ of $f$, there exists a $\ellmahl$-Mahler operator $L$ with coefficients in $\Qbar[z]$ and with constant coefficient $\denommahl{f}$ such that $L(f)=0$.   
But, Proposition~\ref{prop growth denom} ensures that  $\denommahl{f}$ has its nonzero roots in $\mathcal U$ if $\indC=2$ and in $\mathcal U_\ellmahl$ if $\indC=3$. So, 
according to Proposition~\ref{theo avec a0}, the operator $L$ has a full basis of generalized $\ellmahl$-Mahler series solutions satisfying $\condpuisprime{\indC}$. The minimal $\ellmahl$-Mahler operator of $f$ satisfies the same property because it is a right factor of $L$.

We now come to the general case. For any $\nu \in \Z_{\geq 1}$ relatively prime with $\ellmahl$ and any $k\in \Z_{\geq 0}$, we have 
\begin{equation}\label{eq:rescalling}
[\nu \ellmahl^{k}]_{*}f
=
\sum_{(c,j) \in \cj}\sum_{\tuple \in \ala}f_{c,j,\tuple}(z^{\nu \ellmahl^{k}})\xi_{\tuple_{\nu \ellmahl^{k}}}(z)c^{k}e_c (\logm +k)^j. 
\end{equation}
where, for any $\tuple=(\balpha,\lambd,\a)\in\ala$, we set $\tuple_{\nu \ellmahl^{k}}=(\balpha,\lambd,\nu\ellmahl^k\a)$. Fix such $\nu$ and $k$ such that the $f_{c,j,\tuple}(z^{\nu \ellmahl^{k}})$ belong to $\Qbar((z))$ and the tuples $\nu\ellmahl^k\a$ have entries in $\Z_{\geq 0}$, so that the $\tuple_{\nu \ellmahl^{k}}$ involved in \eqref{eq:rescalling} belong to $\alaZ$. Thus, we have 
$$
[\nu \ellmahl^{k}]_{*}f
=
\sum_{(c,j') \in \cj}\sum_{\tuple' \in \alaZ}g_{c,j',\tuple'}\xi_{\tuple'}e_c \logm^{j'} 
$$
where the $g_{c,j',\tuple'}\in\Qbar((z))$ are $\Q$-linear combinations of the $f_{c,j,\tuple}(z^{\nu \ellmahl^{k}})$ with $j \in \Z_{\geq 0}$ and $\tuple \in \alaZ$. In particular, these $g_{c,j',\tuple'}$ satisfy $\condprime{\indC}$. It follows from the first part of the proof that the minimal $\ellmahl$-Mahler equation of $[\nu \ellmahl^{k}]_{*}f$ over $\Kinf$, say
$$
a_0(z)y + \cdots+ a_{d-1}(z) \malop{\ellmahl}^{d-1}(y)+ \malop{\ellmahl}^d(y)=0, 
$$
has a full basis $g_1,\ldots,g_d$ of generalized $\ellmahl$-Mahler series satisfying $\condpuisprime{\indC}$. It follows from Lemma~\ref{lem:eq pour * nu pk} that  $f_{1}=[\nu \ellmahl^{k}]_{*}^{-1}(g_1),\ldots,f_{d}=[\nu \ellmahl^{k}]_{*}^{-1}(g_d)$ is a full basis of solutions of the equation
$$
a_0(z^{\nu^{-1}p^{-k}})y + \cdots+ a_{d-1}(z^{\nu^{-1}p^{-k}}) \malop{\ellmahl}^{d-1}(y)+ \malop{\ellmahl}^d(y)=0\,.
$$
which is the minimal $\ellmahl$-Mahler equation of $f$ over $\Kinf$. Lemma~\ref{lem:change_variable} ensures that $f_{1},\ldots,f_{d}$ satisfy $\condpuisprime{\indC}$, whence the desired result.

\section{Final remark about the Purity Theorem}\label{sec:final_rem}

Theorem~\ref{thm:pureté1} states that the property  $\condpuisprime{1}$, $\condpuisprime{2}$ or $\condpuisprime{3}$ of a generalized $\ellmahl$-Mahler series is inherited by the other solutions of its minimal equation. The following result shows that this is not true for $\condpuisprime{4}$ nor for $\condpuisprime{5}$.

\begin{prop}\label{prop:contre_exemple}
	There exists a $\ellmahl$-Mahler power series satisfying $\condpuisprime{4}$ and $\condpuisprime{5}$ having the following property: its minimal $\ellmahl$-Mahler equation has a generalized $\ellmahl$-Mahler series solution which does neither satisfy $\condpuisprime{4}$ nor $\condpuisprime{5}$.
\end{prop}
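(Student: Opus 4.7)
The proof will exhibit an explicit counterexample. The starting point is Remark~(ii), where a second-order $\ellmahl$-Mahler operator is displayed whose $\Qbar$-span of solutions contains both the constant $1$ (satisfying $\condpuisprime{5}$) and the wild Laurent series $g = -z^{-1}+3z+6z^2+\cdots$, whose coefficients grow at least polynomially so that $g$ violates $\condpuisprime{4}$. That operator is not minimal for $1$, since the minimal equation of a constant is the trivial order-one equation $\malop{\ellmahl}(y)-y=0$, so this pair does not directly furnish the desired counterexample. The plan is to modify the construction so as to produce a non-constant nice $\ellmahl$-Mahler power series $f$ whose minimal equation still admits a wild generalized Mahler series solution.

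My strategy proceeds via the fundamental-matrix machinery of Theorem~\ref{theo: fund syst sol princ}. For any $A \in \GL_d(\Kinf)$ defining a $\ellmahl$-Mahler system $\malop{\ellmahl}(Y) = AY$, the solutions are encoded by the fundamental matrix $F_1 F_2 e_C$, with $F_1 \in \GL_d(\puis)$, $F_2 \in \mathfrak{V}_{\r}$, and $C \in \GL_d(\Qbar)$. I would engineer an explicit $A$ (of rank at least two, possibly three, since the constraint that $A$ have entries in $\Kinf$ together with the non-constancy requirement on $f$ is rather tight at rank two) so that, among the specific $\Qbar$-linear combinations of entries of the first row of $F_1 F_2 e_C$, one is a $\ellmahl$-Mahler power series $f \in \Qbar[[z]]$ with bounded-height coefficients, and another is a generalized $\ellmahl$-Mahler series whose standard decomposition (in the sense of Section~\ref{sec:standard}) contains a Puiseux component with coefficients of Weil height growing at least like $\log H(\gamma)$. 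A natural candidate for the wild part is an element of the form $h \cdot e_c$ with $h = \sum_n \alpha^n z^{\ellmahl^n}$ for some $\alpha \in \Qbar^\times$ not a root of unity, since such a classical Mahler series has coefficients of Weil height growing linearly in $n$, matching $\log H(\ellmahl^n)$; the wild entries would then enter the system via a carefully chosen Puiseux entry of $F_1$ or an entry of $F_2$ built from suitable $\xi_{\tuple}$.

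The main obstacle is the simultaneous realization of three conditions: (i) the nice power series $f$ is non-constant and its $\phi$-orbit $f,\malop{\ellmahl}(f),\ldots$ is $\Kinf$-linearly independent to the target order, so the minimal equation of $f$ has the required order; (ii) $A$ has entries in $\Kinf$, a rationality constraint that severely restricts the admissible $F_1, F_2, C$; and (iii) the companion solution's standard decomposition contains a genuinely wild Puiseux component. The verification of (i) amounts to proving an irrationality-type statement for $f$ over $\Kinf$, to be obtained by explicit arithmetic analysis of its coefficients. Once such an $f$ is constructed and shown to have minimal equation of the desired order, Theorem~\ref{thm:decomp_chi} guarantees the full basis of solutions is made of generalized $\ellmahl$-Mahler series, and the failure of $\condpuisprime{4}$ (hence also of $\condpuisprime{5}$) for the companion solution follows from Theorem~\ref{thm: hgt} combined with Proposition~\ref{prop:def Cr gms} applied to its standard decomposition, completing the counterexample.
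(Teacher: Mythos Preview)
Your proposal is not a proof but a strategic outline, and the obstacles you yourself identify in (i)--(iii) remain unresolved. In particular, you never exhibit a concrete matrix $A$, a concrete nice power series $f$, or verify that the resulting minimal equation has a wild companion solution; the phrase ``I would engineer an explicit $A$'' is a promise, not a construction. Without that construction the argument is empty.

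The paper bypasses all of this machinery by picking a classical, ready-made example: take $\ellmahl=2$ and let $f_{\rm RS}\in\Qbar[[z]]$ be the generating series of the Rudin--Shapiro sequence, whose coefficients lie in $\{-1,1\}$ and hence trivially satisfy $\condprime{5}$. Its well-known minimal $2$-Mahler equation is
\[
y(z)+(z-1)y(z^2)-2zy(z^4)=0.
\]
The Newton polygon of this equation has exponents $1$ and $-\tfrac12$, so Theorem~\ref{theo: fund syst sol princ} produces a second solution of the form $\bigl(\tfrac12 f_{\rm RS}\,\xi_{(0,-2,1)}+g\bigr)e_{-1/2}$, where $g\in\Qbar[[z]]$ solves an explicit inhomogeneous equation of order~$2$. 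A direct recursion on the coefficients of $g$ at the indices $2^n$ shows that the $2$-adic valuation of $g_{2^n}$ equals $n+1$, so $\hgt(g_{2^n})\geq n \asymp \log H(2^n)$, and by Proposition~\ref{prop:def Cr gms} the second solution fails $\condpuisprime{4}$.

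The contrast with your plan is instructive: rather than engineering $A$ and then struggling to locate a nice $f$ inside its solution space, the paper starts from a nice $f$ already known to satisfy an order-two equation over $\Qbar(z)$, and only afterwards invokes Theorem~\ref{theo: fund syst sol princ} to describe the companion solution. Your candidate wild series $\sum_n \alpha^n z^{\ellmahl^n}$ satisfies an order-one inhomogeneous (or order-two homogeneous) equation on its own, and there is no reason it should appear in the minimal equation of a \emph{non-constant} power series with bounded coefficients; making that linkage work is precisely the hard part you have not addressed.
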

\begin{proof}
Let $\ellmahl=2$.	Consider the following equation 
		\begin{equation}\label{eq:RS}
	y(z)+(z-1)y(z^2)-2zy(z^4)=0. 
	\end{equation}
	It is well-known that one of its solutions is the generating series $f_{\rm RS}\in \Q[[z]]$ of the Rudin-Shapiro sequence (see for example \cite{AS03}):
	$$
	(a_n)_{n \in \Z_{\geq 0}} = 1, 1, 1, -1, 1, 1, -1, 1, 1, 1, 1, -1, -1, -1, 1,\ldots \,.
	$$
Its coefficients belong to $\{-1,1\}$, so that it satisfies $\condprime{5}$ and, thus, $\condpuisprime{4}$ and $\condpuisprime{5}$. A study of the Newton polygon of this equation, as in \cite{RoquesFrobForMahler}, shows that the exponents attached to this equation are $1$ and $-\frac{1}{2}$. Thus, it follows from \cite{RoquesFrobForMahler} 
	and Theorem~\ref{theo: fund syst sol princ} that the system associated with \eqref{eq:RS} has a fundamental matrix of solutions of the form $F_1F_2e_C$, where 
	$$
	F_1 \in \GL_2(\puis), \quad F_2= \begin{pmatrix}
	1 & \xi \\ 0 & 1
	\end{pmatrix}, \quad e_C=\begin{pmatrix} 1 & 0 \\ 0 & e_{-\frac{1}{2}}\end{pmatrix},
	$$
	with $\xi \in \V_1$.
 The upper-left entry of $F_1$ is solution of \eqref{eq:RS}. Hence, up to multiplication by a scalar, we may take it to be $f_{\rm RS}$. Let $g=(F_1)_{1,2} \in \puis$ be the upper-right entry of $F_1$. Then, a second solution of \eqref{eq:RS} is the generalized $2$-Mahler series $fe_{-\frac{1}{2}}$ where $f=f_{\rm RS}\xi + g \in \Hahn$.

	Using the fact that $\malop{2}(e_{-\frac{1}{2}})=-\frac{1}{2}e_{-\frac{1}{2}}$, we obtain that $f$ is solution of the equation
	\begin{equation}
	\label{eq:RS-Hanh}
	y(z)-\frac{1}{2}(z-1)y(z^2)-\frac{1}{2}zy(z^4)=0.
	\end{equation}
	Let $\tuple=(0,-2,1)$. We claim that we can take
		$$
	\xi(z)=	\frac{1}{2}\xi_{\tuple}(z) = -\sum_{k=1}^\infty (-2)^{k-1}z^{-1/2^k} \in \Hahn
		$$
		To prove this claim it is sufficient to prove that there exists a Puiseux series $\widetilde{g}$ such that $\frac{1}{2}f_{\rm RS}\xi_{\tuple}(z) + \widetilde{g}$ is solution of \eqref{eq:RS-Hanh}. Since $\xi_\omeg(z^2)=-2\xi_\omeg(z)-\frac{2}{z}$, it is equivalent to prove that the equation
			\begin{equation}\label{eq:g}
		y(z)-\frac{1}{2}(z-1)y(z^2)-\frac{1}{2}zy(z^4)
		= \frac{1}{2z}f_{\rm RS}(z)-\frac{1}{2z}f_{\rm RS}(z^4)
		\end{equation}
		has a Puiseux series solution.
		 A study of the Newton polygon associated to \eqref{eq:g}, as in \cite{CompSolMahlEq}, shows that it is the case. Thus, we can take $\xi=\frac{1}{2}\xi_{\tuple}$ and $g$ to be this power series solution of \eqref{eq:g}. Since the decomposition 
		 $$f=\left(\frac{1}{2}f_{\rm RS}\right)\xi_\omeg + g\xi_{((),(),())}
		 $$ is the standard decomposition of $f$, to conclude it is sufficient to prove that $g$ does not satisfy $\condprime{4}$.

		Using \eqref{eq:g}, it is easily checked that $\val g=0$.   Looking at the coefficient of $z^0$ in \eqref{eq:g} we obtain that $g_0=\frac{1}{3}$. Let $g=\sum_{n\geq 0}g_nz^n$.  Looking at the coefficient of $z^1$ in \eqref{eq:g} and using the fact that $g \in \Q[[z]]$ we obtain
	$$
	g_1 - \frac{1}{2}g_0 - \frac{1}{2} g_0 = \frac{1}{2}a_2,
	$$ 
	where we let $f_{\rm RS}=\sum_{n\geq 0} a_nz^n$. Thus, $g_1=\frac{5}{6}$. Now, since $g$ is a power series, looking at the coefficient of $z^{2^n}$ in \eqref{eq:g} we obtain,
	$$
	g_{2^{n}}=\frac{1}{2}a_{2^n+1} - \frac{1}{2}g_{2^{n-1}}
	$$
	Since $a_{2^n+1} = \pm 1$, it follows by induction on $n$ that the $2$-adic valuation of $g_{2^{n}}$ is equal to $n+1$. In particular, $\hgt(g_{2^n})\geq n$. Since $n = \log(H(2^n))/\log(2)$, we have $\hgt(g_\gamma) \in \Omega(\log(H(\gamma)))$ and $g(z)$ does not satisfy $\condprime{4}$.
	\end{proof}

\bibliographystyle{alpha}
\bibliography{biblio}
\end{document}